\documentclass[
onefignum,onetabnum]{siamart190516}


\usepackage{caption,subcaption}
\captionsetup[subfigure]{labelfont=rm}

\usepackage{color}
\usepackage{lipsum}
\usepackage{amsfonts}
\usepackage{graphicx}
\usepackage{epstopdf}
\usepackage{algorithmic}
\ifpdf
  \DeclareGraphicsExtensions{.eps,.pdf,.png,.jpg}
\else
  \DeclareGraphicsExtensions{.eps}
\fi


\newsiamremark{remark}{Remark}
\newsiamremark{hypothesis}{Hypothesis}
\crefname{hypothesis}{Hypothesis}{Hypotheses}
\newsiamthm{claim}{Claim}

\headers{Sampling the X-ray transform on simple surfaces}{F. Monard and P. Stefanov}

\title{Sampling the X-ray transform on simple surfaces}

\author{Fran\c{c}ois Monard\thanks{Department of Mathematics, University of California Santa Cruz, Santa Cruz, CA 95064
  (\email{fmonard@ucsc.edu}, \url{http://people.ucsc.edu/\string~fmonard/}). F.M. is partially supported by NSF-CAREER grant DMS-1943580.}
\and Plamen Stefanov\thanks{Department of Mathematics, Purdue University, West Lafayette, IN 47907 
    (\email{stefanov@math.purdue.edu}, \url{https://www.math.purdue.edu/\string~stefanov/}). P.S. is partially supported by the NSF under grant DMS-1900475.}}

\usepackage{amsopn}

\newcommand{\g}[2]{ \langle #1, #2 \rangle_g }
\newcommand{\G}[2]{ \langle #1, #2 \rangle_G }
\newcommand{\two}{ {\mathrm{II}} }
\newcommand{\HH}{ H}
\def\Cm{ {\mathbb{C}} }
\def\Dm{ {\mathbb{D}} }
\def\Rm{ {\mathbb{R}} }
\def\Zm{ {\mathbb{Z}} }
\def\Sm{ {\mathbb{S}} }
\newcommand{\R}{{\bf R}}
\newcommand{\semi}{semi\-classical}
\DeclareMathOperator{\Id}{Id}
\renewcommand{\r}[1]{(\ref{#1})}
\renewcommand{\Xi}{B}
\newcommand{\PDO}{$\Psi$DO}
\newcommand{\HPDO}{$h$-$\Psi$DO}
\DeclareMathOperator{\supp}{supp}
\DeclareMathOperator{\sinc}{sinc}
\DeclareMathOperator{\WFH}{WF_{\it h}}
\newcommand{\be}[1]{\begin{equation}\label{#1}}
\newcommand{\ee}{\end{equation}}
\def\D{\mathcal{D}}
\def\C{\mathcal{C}}
\def\t {p}

\def\ss{\mathfrak{s}}

\newcommand{\rev}[1]{#1}
\newcommand{\note}[1]{#1}


\ifpdf
\hypersetup{
  pdftitle={Sampling the X-ray transform on simple surfaces},
  pdfauthor={F. Monard and P. Stefanov}
}
\fi




\begin{document}

\maketitle

\begin{abstract}
  We study the problem of proper discretizing and sampling issues related to geodesic X-ray transforms on simple surfaces, and illustrate the theory on simple geodesic disks of constant curvature. Given a notion of band limit on a function, we provide the minimal sampling rates of its X-ray transform for a faithful reconstruction. In Cartesian sampling, we quantify the quality of a sampling scheme depending on geometric parameters of the surface (e.g. curvature and boundary curvature), and the coordinate system used to represent the space of geodesics. When aliasing happens, we explain how to predict the location, orientation and frequency of the artifacts.
\end{abstract}

\begin{keywords}
  Integral geometry, sampling, inverse problems, geodesic X-ray transform, Shannon-Nyquist theory
\end{keywords}

\begin{AMS}
  35R30, 44A12, 53C65, 94A20
\end{AMS}

\section{Introduction}

A semiclassical theory of sampling was initiated in \cite{Stefanov2018,Stefanov2020}, addressing in the large sampling issues in linear inverse problems where the forward operator is a classical Fourier Integral Ope\-rator (FIO): given a classical FIO $A$ and an unknown essentially band-limited function $f$, how to derive sampling requirements on $Af$ guaranteeing a good (unaliased) reconstruction of $f$ when, say, $A$ is elliptic? If the sampling rate of $Af$ is fixed, what limit does that impose to the resolution of the recovered $f$? 
If $Af$ is undersampled, can one predict the location, frequency and orientation of the artifacts?

Even if $A$ admits a reasonable inverse $A^{-1}$ at the continuous level, and the data is noiseless, a numerical inversion consists of applying a discretized version of $A^{-1}$ to a discrete data set $Af$, evaluated on a discrete reconstruction grid. At this point, aliasing can occur at three places: (1) when the sampling of the data is too coarse to resolve all details of $Af$; (2) when the resolution of the reconstruction grid is too coarse to resolve all details of $f$; (3) when some building blocks of $A^{-1}$ are discretized using step sizes which do not capture all details throughout the inversion. 

The main ingredients to avoid such aliasing and derive minimal sampling rates are (i) the Nyquist-Shannon theorem, ruling Cartesian sampling rates in terms of the band limit and (ii) an understanding of how band limits (or more generally, semiclassical wavefront sets) get mapped through the action of FIOs. All of this is also dependent on a sampling strategy, and on a notion of band limit, which is relative to a choice of coordinates. This realization forces one to consider that, while the analysis of geometric inverse problems at the continuous level should be somewhat independent of the choice of coordinates, one should look for ``good'' coordinate systems for purposes of optimal sampling. 

As advocated in \cite{Stefanov2018}, the appropriate viewpoint to approach sampling is {\em semiclassical} for the following reasons: although classical singularities detect jumps (and as such, local features on images), they do not encode band limit, as a nonempty wavefront set is conic, and hence, unbounded; on the other hand, the semiclassical wavefront set keeps track of the detail content of a (semiclassical) function, including size of details. Once this is established, further work in \cite{Stefanov2018} shows that the Nyquist-Shannon has a natural interpretation in semiclassical terms, and that classical FIOs induce natural maps on semiclassical singularities through their canonical relation. Further, the process of aliasing (due to periodization in the Fourier domain) can be naturally described as a semiclassical FIO with an easy-to-describe canonical relation. In what follows, these principles become the building blocks of how to predict sampling rates, and how to describe the aliasing artifacts when the sampling requirements are not met.

While \cite{Stefanov2018} illustrated the theory on the Euclidean Radon transform and the thermoacoustic tomography problem, see also \cite{Chase_Sampling}, the present article aims at illustrating these principles further, in the case where $A$ is the geodesic X-ray transform (denoted $I_0$ below) on simple surfaces. This operator, a prototype of integral geometry, is currently under active study for its connections with imaging sciences such as X-ray Computerized Tomography, travel-time tomography, see, e.g., \cite{SUVZ-survey, SUV_anisotropic}, and polarimetric neutron tomography (via its non-abelian analogues), see \cite{Ilmavirta2019,Paternain2021} for recent expository works on the topic. In the Euclidean case, this operator is the theoretical backbone of X-ray CT, for which resolution issues and optimal sampling geometries have been extensively studied, see e.g. \cite{Natterer1993, Natterer2001}. On sampling issues for non-Euclidean integral-geometric transforms, a recent study was also undertaken in \cite{Katsevich2020,Katsevich2017c,Katsevich2019a}, under the {\it a priori} assumption that the unknown function has jump discontinuities, attempting to resolve jumps via a study of edge responses on the data side. As such functions are never bandlimited, the present methods would always predict aliasing, and thus the methods involved there are of a different nature as they exploit a different type of sparsity.

In what follows, we first lay out the theory for a general simple surface, in particular we look for convenient representations of the canonical relation of the X-ray transform in two coordinate systems, see \cref{lem:etaFB,lem:etaWT}. The first one is the well-known fan-beam coordinate system, while the second recovers parallel coordinates in the Euclidean case, up to a squareroot rescaling near the boundary of the manifold of geodesics, see \cref{sec:canrelcoords,rem:eucl}. Previous coordinates systems were also introduced in \cite{Assylbekov2018}, although the second one, introduced in \cref{sec:parallel}, seems to be new. Such coordinates exist on any simple surface and, unlike fan-beam coordinates, encode global knowledge about the scattering relation of the surface. 

For the purpose of numerical illustration, we then work with simple geodesic disks of constant curvature (Constant Curvature Disks for short, or CCDs), a two-parameter family of simple surfaces indexed by their diameter and curvature, on which the X-ray transform was recently studied in \cite{Mishra2019,Monard2019a}. Constant Curvature Disks deserve to be first analyzed for the following reasons: all geometric quantities can be explicitly computed; the circular symmetry simplifies the exposition; the inversion formula for $I_0$, first established in \cite{Pestov2004}, is exact at the continuous level, unlike in non-constant curvature where a smoothing error term appears. In addition, this family is rich enough that one can witness two kinds of degeneracies of simplicity (boundary curvature going to zero and emergence of conjugate points at the boundary, or boundary length going to infinity), both of which have quantifiably detrimental impact on the sampling rates. 

We present numerical experiments in \cref{sec:canrel}, illustrating several principles, which we briefly mention along with some of the conclusions. After choosing a notion of band limit, a coordinate system in data space and a sampling strategy, our experiments show that sampling rates are indeed sharply dictated by the canonical relation of $I_0$, and in turn, by Jacobi fields of the underlying surface. These quantities allow to  predict {\em a priori} minimal sampling rates that help avoid aliasing, and they help measure the relative performance of two coordinate systems in a given situation. Depending on the geometry chosen, the shape of the frequency set of $I_0 f$ allows for denser non-overlapping periodizations (than if it was fit into a Cartesian box), and this in turn drives coarser sampling limits, in a similar fashion to \cite{Natterer1993}, an observation going back to \cite{PetersenM}. Moreover, for integrands $f$ with a frequency content that is simple to describe, we explain the artifacts on $f$ that result from undersampling $I_0 f$, in particular, how to predict their location, orientation and frequency.  A notable feature in this case is that when inverting $I_0$ using undersampled data, certain features may change {\em both in position and direction}. It is also surprising that in this case, undersampled X-ray data can give rise to artifact reconstructions with {\em higher} frequency than the original $f$. 
This is to be contrasted with ``traditional aliasing'', where the subsampled function can only have lower frequency content, at the same location. In the present case, spectral folding in data space can send singularities to places whose preimages by the canonical relation of $I_0$ have higher frequency than their original preimages before aliasing.

\smallskip
{\bf Outline.} The remainder of the article is structured as follows. We first review the general material on semiclassical sampling in \cref{sec_sampling}. \Cref{sec:canrel} covers the canonical relation of the X-ray transform, first for a general surface (\cref{sec:canrel} through \cref{sec:canrelcoords}), then more explicitly in the case of CCDs (\cref{sec:CCDs}). \Cref{sec:numerics} covers numerical illustrations in the case of CCDs. Proofs of certain results from \cref{sec:canrel} are relegated to \cref{sec:app}.

\section{Review of semiclassical sampling}  \label{sec_sampling}
We review briefly the asymptotic approach to sampling developed by the second author in  \cite{Stefanov2018}, see also \cite{Stefanov2020}. 

\subsection{The classical Nyquist-Shannon theorem} 

The  Nyquist-Shannon sampling theorem, see, e.g., \cite{Natterer2001, Epstein-book}, says that a function $f\in L^2(\Rm^n)$ satisfying the band limit condition 
\begin{align}
    \supp \hat f\subset [-B,B]^n \qquad \text{for some } B\in (0,\infty)
    \label{eq:band limit}
\end{align}
is uniquely determined by its samples $f(sk)$, $k\in \mathbb{Z}^n$, as long as $0<s\le \pi/B$. The function $f$ can be recovered from its samples by the following interpolation formula
\be{P1}
f(x) = \sum_{k\in\mathbb{Z}^n} f(sk)\chi\Big(\frac{1}{s}(x-sk)\Big), \quad \chi(x):= \prod_{j=1}^n\sinc(\pi x^j),
\ee
\note{where $\sinc(x)=\sin x/x$. } 
The mapping $L^2(\Rm^n)\ni f\mapsto \{f(sk)\}_k \in \ell^2$, restricted to functions satisfying \cref{eq:band limit}, is an isometry with the $\ell^2$ norm scaled as below:
\be{P2}
\|f\|^2_{L^2}= s^n\sum_{k\in\mathbb{Z}^n}|f(sk)|^2,
\ee
and is unitary, when $s=\pi/B$. Such functions are called band limited with band limit $B$ in each variable; but we may regard the box $[-B,B]^n$ as the band limit. They must be entire functions which limits the application of this theorem in this formulation. In particular,  $f$ cannot be compactly supported without being zero. Another inconvenience is that the sum \r{P1} is in general infinite and the sinc functions in \r{P1} are not well localized near $x_k=sk$. The latter can be improved if we assume $s<\pi/B$ (strictly), then $\chi$ could be chosen to be \note{in the} Schwartz class. 
The isometry property \cref{P2} allows us to localize the sum by a truncation and to consider approximately band limited functions, at the expense of ``small'' errors. 

If the band limit condition \cref{eq:band limit} does not hold, there is no uniqueness, and recovery using \r{P1} leads to artifacts; this is called aliasing. 

\smallskip
\noindent{\bf The semiclassical version of Nyquist-Shannon.} In \cite{Stefanov2018}, we took an asymptotic point of view. Specifically, in the study of (Cartesian) sampling, upon fixing a suitably chosen interpolation kernel $\chi$, and freezing a relative step $s\in (0,\infty)$, a function $f$ gives rise to an $h$-dependent family of interpolants 
\begin{align*}
    f^{\rm int}_h(x) = \sum_{k\in\mathbb{Z}^n} f(shk)\chi\Big(\frac{1}{sh}(x-shk)\Big),
\end{align*}
and the appropriate framework to study the error $\|f-f^{\rm int}_h\|_{L^2}$, as $h\to 0$, is the semiclassical analysis, see, e.g., \cite{Zworski_book, Martinez_book}. In other words, the sampling step is $sh$ with $h\to0$, and we are interested in the asymptotic behavior. 
This circle of ideas further allows to consider $h$-dependent $f$'s, and in order to formulate the main adaptation of the Nyquist-Shannon theorem, we first recall some facts about the semiclassical analysis, see also \cite{Zworski_book, Martinez_book}. 

In what follows, we consider functions which may depend on $h$, with the  requirement to be {\bf tempered} in the sense that $\|f\|_{H^s}=O_s(h^{-N})$ for some $s$ and $N$. We  omit the subscript $h$ in $f_h$ often. We define the semiclassical Fourier transform by rescaling the dual variable $\xi$ to $\xi/h$ and setting
\[
\mathcal{F}_hf(\xi) =\int e^{-i x\cdot\xi/h} f(x)\,d x.
\]
The {\bf semiclassical wave front set} $\WFH(f)$ of a tempered $f_h$ is the complement of those $(x_0,\xi^0)\in\Rm^{2n}$ for which there exists a $C_0^\infty$ function $\phi$ so that $\phi(x_0)\not=0$ with
\be{WFH_temp}
\mathcal{F}_h(\phi f_h) (\xi)  = O(h^\infty)\quad \text{for $\xi$ in a neighborhood of $\xi^0$}.
\ee
It is a closed set defined invariantly in $T^*\Rm^n$ but it is not a conic set in general. Its elements are called {\bf (semiclassical) singularities} even if the function has no classical singularity. A function $f_h\in C_0^\infty(\Rm^n)$ is then {\bf semiclassically band limited} if $\WFH(f_h)$ is compact. Such functions are also called \textit{localized in phase space} in \cite{Zworski_book}, and we give several alternative characterizations in \cite{Stefanov2018}. We view $\WFH(f)$ as the band limit of $f$ now, i.e., we lift that notion to the phase space.

An example of such $f_h$ can be constructed as follows. Take any distribution $f_0$ supported in some compact set $K\subset\Rm^n$. Let $\hat\phi\in C_0^\infty$ be supported in $\Sigma\subset\Rm^n$, and set $\phi_h= h^{-n} \phi(\cdot/h)$. Then $f_h := f_0*\phi_h$ is semiclassically  band limited with $\WFH(f)\subset K\times\Sigma$. This example is a special case of the following one: for every $p\in C_0^\infty(\Rm^n\times\Rm^n)$, $p(x,hD)f_0$ is \semi ly  band limited with $\WFH(f)\subset \supp p$, where $P=p(x,hD)$ is the \HPDO\ 
\be{hPDO}
Pf(x) = (2\pi h)^{-n}\iint e^{i (x-y)\cdot\xi/h} p(x,\xi) f(y)\,d y\, d\xi.
\ee
This  symbol belongs to the symbol class $S^{m,k}(\Omega)$, where $\Omega\subset \Rm^n$ is an open set, as the smooth functions $p(x,\xi)$ on $\Rm^{2n}$, depending also on $h$, satisfying the symbol estimates
\be{hpdo1}
|\partial_x^\alpha \partial_\xi^\beta p(x,\xi)|\le C_{K,\alpha,\beta}h^k\langle \xi\rangle^{m-|\beta|}
\ee
for $x$ in any compact set $K\subset\Omega$. We are interested here in symbols compactly supported in the $\xi$ variable; then the factor $\langle \xi\rangle^{m-|\beta|}$ above can be omitted. 

The symbols we use here have an \note{asymptotic} expansion $p=h^kp_1+h^{k-1}p_2+\dots$, and we call $p_1$ the principal symbol. It is defined invariantly on $T^*\Rm^n$, where $\WFH(f)$ lives, as well. 

Let $\Sigma_h(f)$ be the \semi\ frequency set of $f$, defined as the projection $\pi_2 (\WFH(f))$ of $\WFH(f)$ onto the second factor\footnote{Note that $\pi_2$ is not coordinate invariant and is understood here in the sense of the trivialization $T^* \Rm^n\ni (x, \xi = \xi_i dx^i) \leftrightarrow (x,(\xi_1, \dots, \xi_n))\in \Rm^n \times \Rm^n$}. The \semi\ sampling theorem in \cite{Stefanov2018} is the following. 

\begin{theorem}\label{thm:sampling}
Let $f_h$ be \semi ly band limited with $\Sigma_h(f) \subset  \prod(-B_j,B_j)$ with some $B_j>0$. Let $\hat \chi_j \in L^\infty(\Rm)$ be supported in $[-\pi,\pi]^n$ and $\hat \chi_j(\pi \xi_j/B_j)=1$ for $\xi\in \Sigma_h(f) $. If $0<s_j \le \pi/B_j$, then 
\be{2.3}
f_h(x) = \sum_{k\in \mathbb{Z}^n} f_h(s_1 hk_1,\dots, s_nhk_n) \prod_j \chi_j\left(\frac{1}{s_j h}(x^j-s_jhk_j)\right) + O_{\mathcal{S}}(h^\infty)\|f\|_{L^2} ,
\ee
and 
\be{2.4}
\|f_h\|_{L^2}^2 = s_1\dots s_n h^n\sum_{k\in \mathbb{Z}^n} |f_h(s_1hk_1,\dots, s_nhk_n)|^2 + O(h^\infty)\|f\|_{L^2}^2.
\ee
\end{theorem}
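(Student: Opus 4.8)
The plan is to derive both \cref{2.3} and \cref{2.4} from the Poisson summation formula, the only genuinely semiclassical ingredient being the characterization of semiclassically band limited functions from \cite{Stefanov2018}: a tempered, semiclassically band limited $f_h$ satisfies $\mathcal{F}_h f(\xi) = O(h^\infty)$ --- uniformly in $h$ and with rapid decay as $|\xi|\to\infty$ --- for $\xi$ outside any fixed neighborhood of $\Sigma_h(f)$, equivalently $f = p(x,hD)f + O_{\mathcal S}(h^\infty)\|f\|_{L^2}$ whenever $p\in C_0^\infty$ equals $1$ near $\WFH(f)$. The first step is to Fourier-transform the interpolant $f^{\rm int}_h$ (the sum appearing on the right of \cref{2.3}). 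A change of variables gives in each variable $\mathcal{F}_h\bigl[\chi_j\bigl(\frac{1}{s_jh}(\cdot - s_jhk_j)\bigr)\bigr](\xi_j) = s_jh\,e^{-is_jk_j\xi_j}\,\hat\chi_j(s_j\xi_j)$, where $\hat\chi_j(\zeta)=\int e^{-it\zeta}\chi_j(t)\,dt$ is the ordinary (non-semiclassical) Fourier transform; hence $\mathcal{F}_h f^{\rm int}_h(\xi) = \bigl(\prod_j \hat\chi_j(s_j\xi_j)\bigr)\bigl(\prod_j s_jh\bigr)\sum_k f(s_1hk_1,\dots,s_nhk_n)\,e^{-i\sum_j s_jk_j\xi_j}$, and Poisson summation in $k$ turns the last sum into the periodization $\bigl(\prod_j s_jh\bigr)^{-1}\sum_{m\in\Zm^n}\mathcal{F}_h f(\xi + 2\pi m/s)$, where $2\pi m/s$ denotes the vector with entries $2\pi m_j/s_j$, so that
\[ \mathcal{F}_h f^{\rm int}_h(\xi) = \Bigl(\prod_j \hat\chi_j(s_j\xi_j)\Bigr)\sum_{m\in\Zm^n}\mathcal{F}_h f(\xi + 2\pi m/s). \]

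Next I would split off the $m=0$ term. Using the normalization $\hat\chi_j(\pi\xi_j/B_j)=1$ on $\Sigma_h(f)$ together with $0<s_j\le\pi/B_j$, one checks that $\prod_j\hat\chi_j(s_j\xi_j)=1$ on an open neighborhood $N$ of $\Sigma_h(f)$ (shrinking each $B_j$ slightly if necessary, which is harmless since $\Sigma_h(f)$ is compact inside the open box $\prod_j(-B_j,B_j)$); off $N$ one has $\mathcal{F}_h f = O(h^\infty)$ while $\prod_j\hat\chi_j(s_j\xi_j)$ stays bounded and supported in the fixed box $\prod_j[-\pi/s_j,\pi/s_j]$, so the $m=0$ term equals $\mathcal{F}_h f + O(h^\infty)$ in every weighted $L^2$ norm. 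For $m\neq 0$, on the support of $\prod_j\hat\chi_j(s_j\xi_j)$ one has $|\xi_j|\le\pi/s_j$ for all $j$, hence $|\xi_j + 2\pi m_j/s_j|\ge \pi/s_j \ge B_j$ for any index $j$ with $m_j\neq 0$: this is precisely where the Nyquist bound enters, forcing the periodized copies not to overlap, so $\xi + 2\pi m/s$ stays a fixed distance from $\Sigma_h(f)$ and escapes to infinity as $|m|\to\infty$. The uniform rapid decay of $\mathcal{F}_h f$ away from $\Sigma_h(f)$ then makes $\sum_{m\neq 0}\bigl(\prod_j\hat\chi_j(s_j\xi_j)\bigr)\mathcal{F}_h f(\xi + 2\pi m/s) = O(h^\infty)$ in the same norms. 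Inverting $\mathcal{F}_h$, checking that these $O(h^\infty)$ bounds in fact hold in Schwartz seminorms (everything in sight is rapidly decreasing in $\xi$, uniformly in $h$), and using $\|\mathcal{F}_h f\|_{L^2}=(2\pi h)^{n/2}\|f\|_{L^2}$ (Plancherel), yields \cref{2.3}.

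For \cref{2.4} I would observe that $\Sigma_h(f)$ lies in the interior of the box $R := \prod_j[-\pi/s_j,\pi/s_j]$, so $\mathcal{F}_h f$ is, up to $O(h^\infty)$, supported in $R$. Consequently the samples $f(s_1hk_1,\dots,s_nhk_n) = (2\pi h)^{-n}\int_R e^{i\sum_j s_jk_j\xi_j}\mathcal{F}_h f(\xi)\,d\xi$ satisfy, up to $O_{\mathcal S}(h^\infty)\|f\|_{L^2}$, $f(s_1hk_1,\dots,s_nhk_n) = \frac{h^{-n}}{s_1\cdots s_n}\,c_{-k}$, where $\{c_k\}$ are the Fourier-series coefficients of $\mathcal{F}_h f$ regarded as a function on $R$ (the exponentials $e^{i\sum_j s_jk_j\xi_j}$ having period $2\pi/s_j$ in $\xi_j$, exactly the side-length of $R$); Parseval on $R$ then reads $\sum_k|c_k|^2 = \frac{s_1\cdots s_n}{(2\pi)^n}\|\mathcal{F}_h f\|_{L^2}^2$, and plugging in $\|\mathcal{F}_h f\|_{L^2}^2 = (2\pi h)^n\|f\|_{L^2}^2$ gives $\|f\|_{L^2}^2 = s_1\cdots s_nh^n\sum_k|f(s_1hk_1,\dots,s_nhk_n)|^2$ up to the accumulated $O(h^\infty)\|f\|_{L^2}^2$. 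To make the ``up to $O(h^\infty)$'' rigorous I would run this computation for $Qf$ in place of $f$, with $Q = m(hD)$ a frequency cutoff, $m=1$ near $\Sigma_h(f)$ and $\supp m\subset R$: for $Qf$ the identity is exact (genuine Parseval on $R$), and passing from $Qf$ back to $f$ costs only $O(h^\infty)\|f\|_{L^2}^2$ because $f - Qf = O_{\mathcal S}(h^\infty)\|f\|_{L^2}$ and only $\sim h^{-n}$ samples contribute effectively.

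The main obstacle is not the mechanism but the error bookkeeping. Three points need care: (i) upgrading the Fourier-side $O(h^\infty)$ estimates to $O_{\mathcal S}(h^\infty)$ estimates that survive Fourier inversion, which rests on the uniform-in-$h$ rapid decay of $\mathcal{F}_h f$ away from $\Sigma_h(f)$ and must be quoted (or re-derived via the semiclassical pseudodifferential cutoff representation) from \cite{Stefanov2018}; (ii) controlling the double infinite sum over $m$ and $k$ together with the $\sim h^{-n}$ effective number of samples so that the aggregate error remains $O(h^\infty)$; and (iii) the short but essential verification that $0<s_j\le\pi/B_j$ together with the normalization of $\hat\chi_j$ (i.e.\ that $\hat\chi_j$ equals $1$ on enough of $[-\pi,\pi]$) yields both $\prod_j\hat\chi_j(s_j\xi_j)\equiv 1$ near $\Sigma_h(f)$ and the non-overlap $|\xi_j + 2\pi m_j/s_j|\ge B_j$ of the periodized copies --- the exact place at which the Nyquist rate is used.
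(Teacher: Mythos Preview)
The paper does not prove \cref{thm:sampling}; it is quoted verbatim from \cite{Stefanov2018} as part of the review in \cref{sec_sampling} (``The \semi\ sampling theorem in \cite{Stefanov2018} is the following''), so there is no in-paper argument to compare against.

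Your proposal is the standard route and is essentially what is carried out in \cite{Stefanov2018}: Poisson-summation on the Fourier side, isolation of the $m=0$ term, non-overlap of the shifted copies under the Nyquist constraint $s_j\le \pi/B_j$, and the semiclassical input that $\mathcal F_h f$ is $O(h^\infty)$ with rapid decay off any neighborhood of $\Sigma_h(f)$ (equivalently, $f=p(x,hD)f+O_{\mathcal S}(h^\infty)\|f\|$ for $p\equiv 1$ near $\WFH(f)$). Your three flagged bookkeeping points (i)--(iii) are exactly the places where care is needed, and the cited reference handles them the same way you outline. One small remark on (iii): the hypothesis ``$\hat\chi_j(\pi\xi_j/B_j)=1$ on $\Sigma_h(f)$'' is literally a pointwise condition and does not, by itself, force $\hat\chi_j(s_j\xi_j)=1$ on $\Sigma_h(f)$ when $s_j<\pi/B_j$; in practice (and in \cite{Stefanov2018}) one takes $\hat\chi_j\equiv 1$ on an interval containing the $j$-th projection of $\Sigma_h(f)$ rescaled by $\pi/B_j$, which is consistent with the discussion immediately following the theorem in the present paper. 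Your ``shrink $B_j$ slightly'' comment does not quite fix this on its own, but the intended reading of the hypothesis does.
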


\note{Above, $O(h^\infty)$ stands a function of $h$ bounded by $C_Nh^N$, as $h\ll1$, for every $N$. Similarly, $O_{\mathcal{S}}(h^\infty)$ denotes a function of $x$ and $h$ with every seminorm in the Schwartz class $\mathcal{S}(\R^n)$ being $O(h^\infty)$.}

Note that we chose the interval $(-B_j,B_j)$ to be open so that (the closed) $\Sigma_h(f) $ would be included in the box $\prod(-B_j,B_j)$ strictly, so we require some oversampling. The interpolating functions $\chi_j$ we chose do not need to be sinc ones; they can be and will be chosen later to be \note{in the} Schwartz class, i.e., we will require $\hat\chi_j\in C_0^\infty$. This is possible because of the gap between the boundary of $\prod(-B_j,B_j)$ and  $\Sigma_h(f) $. In practical applications, this is done approximately only. The Lanczos-3 interpolation, for example, is an approximate way of doing this when we oversample twice; we refer to \cite{Stefanov2020} for more details.

\subsection{Sampling classical FIOs} \label{sec:samplingFIOs}

\Cref{thm:sampling} above is just a rescaled and localized version of the classical sampling theorem, with an error estimate. The main point in \cite{Stefanov2018} is the sampling of $Af$, where $A$ is an FIO with a locally diffeomorphic canonical relation, knowing the \semi\ band limit of $f$ (but not of $Af$ a priori), the resolution limit on $f$ posed by a fixed sampling rate of $Af$,  the nature of the aliasing artifacts of $f$ when $Af$ is undersampled, and analysis of locally averaged measurements. We explain some of those results below without formulating them formally.

In applications, we take measurements $Af$ not pointwise but locally averaged like above with some $\phi_h$. The convolution with $\phi_h$ is a \HPDO\ with symbol $\phi(\xi)$. If the canonical relation of $A$ is a local diffeomorphism, by Egorov's theorem (more precisely, from its semiclassical version), $\phi_h*(Af_0)= APf_0+O(h^\infty)$ with such a $P=p(x,hD)$ and even if $f_0$ were $h$-independent, now $Pf_0$ would be $h$-dependent, and it would be  semiclassically band limited. More generally, we can replace $f_0$ by such a regularized version with an $O(h^\infty)$ loss. Another very similar situation occurs in numerical simulations. The size of the grid limits the highest frequency $f$ can have. Even if $f$ is given by a formula, we would have to regularize first, and then sample. A finer grid would require a regularization with a more concentrated kernel, etc., so we are really working with $\phi_h*f_0$, where $h$ is proportional to the grid size. 

Let $A$ be a classical FIO with a canonical relation $C$ which is a local diffeomorphism. The action of $A$ on $\WFH(f)$ is roughly speaking the same as if $A$ was \semi\ or if the wave front set was classical, away from the zero section. More precisely, by  \cite[Theorem~2.6]{Stefanov2018},
\be{FIO1}
\WFH(Af)\setminus 0\subset C(\WFH(f)\setminus 0).
\ee
In particular, \cref{FIO1} is true if $A$ is an \HPDO\ or just a \PDO. 

\smallskip
\noindent{\bf Conditions for unaliased sampling.} For a classical FIO $A$ and a semiclassically band limited $f=f_h$, relation \cref{FIO1} implies that $Af$ is also semiclassically band limited, and the sampling requirements of $Af$ as in \cref{thm:sampling}, are determined by the smallest symmetric box containing $\Sigma_h(Af)$. Such a box will depend on the choice of the coordinate system chosen to represent the data space in which we will assume a Cartesian sampling of $Af$. Specifically, first fix a choice of coordinate system $y = (y^1, \dots, y^m)$ for the data space $Y$  giving rise to a projection $\pi_2$ onto the dual variable ($\pi_2 (y, \eta_i dy^i) = (\eta_1, \dots, \eta_m)$). In these coordinates, relation \cref{FIO1} implies the set-theoretic upper bound
\begin{align}
    \pi_2 (C (\WFH(f))) \subset \prod_{j=1}^m (-b_{y^j}, b_{y^j}), \quad \text{where} \quad b_{y^j}:= \sup_{\eta\in \pi_2 (C(\WFH(f)))} |\eta_j|.
    \label{eq:bnumbers}    
\end{align}

The numbers $b_{y^j}$ are thus the band limits of $Af$ relative to the coordinates $y^j$. As such, the notion of band limit is not invariantly defined, in that it depends on the local coordinates in data space. Then a non-aliased Cartesian sampling of $Af$ will be achieved provided that each sampling rate $h_j$ along the $y^j$ axis satisfies 
\begin{align}
    h_j \le \pi/ b_{y^j}.  
    \label{eq:samplingAf}
\end{align}

The computation of the numbers $b_{y^j}$ thus depends on (i) the {\it a priori} assumption on an upper bound of $\WFH (f)$ such as \cref{eq:band limit} (or more generally, \cref{eq:band limit2} below), (ii) the canonical relation of $A$ and (iii) the choice of coordinate system on $Y$. 

\smallskip
\noindent{\bf Aliasing artifacts.} If the sampling condition \cref{eq:samplingAf} is not satisfied, the aliasing artifacts for the reconstructed $f$ are described by $h$-FIOs; they are not local in general, unlike aliasing for $f$ alone, i.e., when formally $A=\Id$. More precisely, artifacts produced when we apply \cref{thm:sampling} \note{when}  the band limited condition $\Sigma_h(f) \subset  \prod(-B_j,B_j)$ is not satisfied, can be characterized as adding $\sum_k G_k f_h$ to $f_h$, where $G_k$ are h-FIOs with canonical relations given by the shifts
\be{sh}
S_k:(x,\xi) \longmapsto    (x,\xi+{2\pi} k/s)
\ee
restricted to $\xi$ such that $\xi+{2\pi} k/s$ is in the Nyquist box above. 
Note that only $\xi$ shifts, and $x$ does not. This is known as ``frequency folding'' in the applied literature. Now, if the measurement $Af$ is aliased, we apply this to $Af$, and the induced artifacts in the reconstructed $f$ would be sums of compositions $A^{-1}G_kAf_h$ which are h-FIOs under our assumptions on $A$ with canonical relations $\kappa^{-1}\circ S_k\circ \kappa$, where $\kappa$ is that of $A$. In practical applications, $k$ with   components $0, \pm1$  are the most significant ones. Such canonical relations move not just $\xi$ but they move $x$ as well in general. 

\subsection{Further upsampling strategies to reduce the sampling limit} \label{sec:slanted}

As noted in \cite{PetersenM},  the classical sampling theorem, as well as our \cref{thm:sampling} can be generalized in two directions. First, $\Sigma_h(f)$ does not need to be contained in a box. To be more precise, it always is, since it is bounded but  we assume now that  $\Sigma_h(f)\subset\mathcal{B}$ with some open bounded set $\mathcal{B}$, which may  lead to a more efficient sampling configuration. We assume that the images of $\Sigma_h(f)$ under the translations $\xi\mapsto \xi+2\pi(W^*)^{-1} k$, $k\in \mathbb{Z}^n$, are mutually non-intersecting (the tiling condition), for some invertible matrix $W$. This is the second generalization since we do not need $W=\Id$. If the set $\mathcal{B}$ is close enough to $\Sigma_h(f)$, it would satisfy the same tiling condition. 
Then we can apply the linear transformation $x=Wy$. For the dual variables $\xi$ and $\eta$ we have $\eta=W^*\xi$. The Nyquist condition then can be generalized  as follows: we want the images of $\Sigma_h(f)$ under the translations $\xi\mapsto \xi+2\pi(W^*)^{-1} k$, $k\in \mathbb{Z}^n$, to be mutually non-intersecting (the tiling condition), for some invertible matrix $W$. Then \cref{thm:sampling} remains true with samples $shWk$ and the additional factor $|\det W|$ in front of the sum in \r{2.3} and \r{2.4}; and with $\chi$ so that $\hat \chi=1$ near $\Sigma_h(f)$, $\supp\hat \chi\subset \mathcal{B}$.

\section{The canonical relation of the X-ray transform on a simple surface}
\label{sec:canrel}

\subsection{Geometry of $SM$} 

Let $(M,g)$ be a non-trapping Riemannian surface with strictly convex boundary (in the sense of $\partial M$ having strictly positive second fundamental form), with unit tangent bundle $SM\stackrel{\pi}{\longrightarrow} M$ and geodesic flow $\varphi_t\colon SM\to SM$, defined on the set
\begin{align}
    \D = \big\{ (x,v,t): (x,v)\in SM,\ t\in [\rev{-}\tau(x,-v), \tau(x,v)] \big\}, 
    \label{eq:D}
\end{align}
where $\tau(x,v)\ge 0$ is the first exit time of the geodesic emanating from $(x,v) \in SM$, everywhere finite under the non-trapping assumption. We model the space of geodesics over the inward boundary $\partial_+ SM$, where 
\begin{align*}
    \partial_\pm SM := \big\{(x,v)\in SM,\ x\in \partial M,\ \pm g(v,\nu_x)\ge 0\big\},
\end{align*}
where $\nu_x$ is the inward normal at $x\in \partial M$. We define the scattering relation
\begin{align}
    S\colon \partial_+ SM \to \partial_- SM, \qquad S(x,v):= \varphi_{\tau(x,v)}(x,v).
    \label{eq:scatrel}
\end{align}
\rev{With $X:= \frac{d}{dt}|_{t=0} \varphi_t(x,v)$ the infinitesimal generator of the geodesic flow, $V$ the infinitesimal generator of the circle action on the fibers of $SM$, and $X_\perp := [X,V]$, the triple $(X,X_\perp,V)$ forms a frame of $SM$}
with structure equations
\begin{align*}
    [X,V] = X_\perp, \qquad [X_\perp, V] = -X, \qquad [X,X_\perp] = -\kappa(x) V, \quad (\kappa: \text{Gauss curvature}). 
\end{align*}

On $\partial SM$, denote $\mu(x,v) = g_x(v,\nu_x)$ and $\mu_\perp = V\mu = g_x(v_\perp,\nu_x)$, where $\nu_x$ is the inner unit normal to $x\in \partial M$. A basis of $T_{(x,v)}(\partial_+ SM)$ is given by $\left(V_{(x,v)}, H_{(x,v)}\right)$, where $H = \mu_\perp X + \mu X_\perp$ is the horizontal lift of the oriented unit tangent vector to $\partial M$. 

On the domain of definition of the geodesic flow \cref{eq:D}, we define the scalar Jacobi fields $a(x,v,t)$ and $b(x,v,t)$ as the unique functions solving
\begin{align}
    \ddot a + \kappa(\gamma_{x,v}(t)) a = \ddot b + \kappa(\gamma_{x,v}(t)) b = 0, \qquad \left[ \begin{smallmatrix} a & b \\ \dot a & \dot b \end{smallmatrix} \right] (x,v,0) = \left[ \begin{smallmatrix} 1 & 0 \\ 0 & 1 \end{smallmatrix} \right],
    \label{eq:ab}
\end{align}
where $\varphi_t(x,v) = (\gamma_{x,v}(t), \dot\gamma_{x,v}(t))$. We call $(M,g)$ simple if in addition to the assumptions of being convex and non-trapping, there are no conjugate points in $M$. This is equivalent to saying that $b(x,v,t)\ne 0$ for all $(x,v,t)\in \D$ such that $t\ne 0$.

\subsection{Canonical relation of the X-ray transform - geometric version}
\label{sec:canrel_geom}

With $(M,g)$ a convex non-trapping Riemannian surface, one defines the geodesic X-ray transform $I_0\colon C_c^\infty(M) \to C_c^\infty(\partial_+ SM)$ as 
\begin{align}
    I_0 f (x,v) = \int_0^{\tau(x,v)} f(\gamma_{x,v}(t))\ dt, \qquad (x,v) \in \partial_+ SM.
    \label{eq:I0}
\end{align}
It is now well-documented that $I_0$ is a Fourier Integral Operator (FIO), see e.g. \cite{Guillemin1990,Monard2013b,Holman2015}. We now recall the computation of its canonical relation, done by factoring $I_0$ into two simpler FIOs and using the clean intersection calculus.  The calculation that follows would also work for weighted X-ray transforms with non-vanishing weight but we focus on the unweighted transform for simplicity.

The canonical projection $\pi\colon SM\to M$, and the basepoint map $F\colon SM\to \partial_+ SM$ defined by $F(x,v) = \varphi_{-\tau(x,-v)}(x,v)$ are both smooth submersions with $\pi$ proper, and as such define pull-backs $\pi^*\colon C_c^\infty(M)\to C_c^\infty (SM)$ ($\pi^* f(x,v):= f(x)$) and $F^*\colon C_c^\infty(\partial_+ SM)\to C^\infty(SM)$ ($F^* g (x,v) = g(F(x,v))$), as well as push-forwards $\pi_*\colon C_c^\infty(SM)\to C_c^\infty(M)$ and $F_* :  C_c^\infty(SM) \allowbreak \to  C_c^\infty(\partial_+ SM)$ via the relations
\begin{align*}
    \int_{SM} (\pi^* f) g\ d\Sigma^3 = \int_M f (\pi_* g)\ d\text{Vol}_g, \qquad  \int_{SM} (F^* h) g\ d\Sigma^3 = \int_{\partial_+ SM} h\ (F_* g) \mu\ d\Sigma^2, 
\end{align*}
to be true for all $f\in C_c^\infty(M)$, $g\in C_c^\infty(SM)$ and $h\in C_c^\infty (\partial_+ SM)$. 

Using Santal\'o's formula \cite{Sh-UW}, one finds that $I_0 = F_*\circ \pi^*$ where, as explained in \cite{Holman2015}, $\pi^*$ is an FIO of order $\frac{1-n}{4}$ (if $n = \dim M$ in general), and $F_*$ is an FIO of order $\frac{-1}{4}$, with canonical relations
\begin{align*}
	\C_{\pi^*} &= \left\{ (d\pi|_{(x,v)}^T \omega, \omega): (x,v)\in SM,\ \omega \in T^*_x M \backslash\{0\} \right\} \subset T^* SM \times T^* M, \\
	\C_{F_*} &= \left\{ (\eta, dF|_{(x,v)}^T \eta): (x,v)\in SM,\ \eta \in T^*_{F(x,v)} \partial_+ SM \backslash\{0\} \right\} \subset T^* (\partial_+ SM) \times T^* SM.
\end{align*}
The composition is proved to be clean, and in the current notation, the composition is carried out in \cite[Lemma 6.2]{Ilmavirta2019}, with $H_{(x,v)}$ here replacing $\nabla_T|_{(x,v)}$ there. In the statement below, for a vector \rev{ $v\in T_x M$, we denote $v_\perp^\flat\in T_x^* M$ the co-vector obtained by composing the almost-complex structure with the musical operator\footnote{In local coordinates $(x_1,x_2)$, if $v = v^1 \partial_{x^1} + v^2 \partial_{x^2}$, then $v_\perp^\flat = \sqrt{\det g} (-v^2 dx^1 + v^1 dx^2)$.}. }

\begin{lemma}\label{lem:composition} The canonical relation of $I_0$, $\C_{I_0} = \C_{F_*} \circ \C_{\pi^*}\subset T^* (\partial_+ SM)\times T^* M$, computed as
    \begin{align}
	    \C_{F_*} \circ \C_{\pi^*} &= \Big\{ (\lambda \eta_{x,v,t}, \lambda \omega_{x,v,t}),\ (x,v)\in \partial_+ SM,\ t\in (0,\tau(x,v)),\ \lambda\in \Rm \Big\}, 
	\label{eq:relation}
    \end{align}
    where $\omega_{x,v,t} := (\dot\gamma_{x,v}(t))_\perp^\flat \in T^*_{\gamma_{x,v}(t)} M$, and where $\eta_{x,v,t}\in T^*_{(x,v)} \partial_+ SM$ is uniquely defined by the relations
    \begin{align}
	\eta_{x,v,t} (V) = b(x,v,t), \qquad \eta_{x,v,t} (H) = -\mu a(x,v,t). 
	\label{eq:linsys2}
    \end{align}
\end{lemma}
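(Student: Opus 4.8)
The plan is to compute the composed canonical relation $\C_{F_*}\circ\C_{\pi^*}$ by hand, taking for granted --- as established in \cite{Holman2015} and \cite[Lemma~6.2]{Ilmavirta2019} --- that the composition is clean, so that $\C_{I_0}$ genuinely equals this set as an immersed Lagrangian rather than being merely contained in it. Unwinding the definitions of $\C_{\pi^*}$ and $\C_{F_*}$, a pair $(\zeta,\omega)$ lies in the composition exactly when there is a point $(y,w)\in SM$ and a covector $\rho\in T^*_{(y,w)}SM$ with $\rho = d\pi|_{(y,w)}^T\omega = dF|_{(y,w)}^T\zeta$, where $\omega\in T^*_{\pi(y,w)}M$ and $\zeta\in T^*_{F(y,w)}\partial_+SM$. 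So the task splits into (a) identifying, at each $(y,w)$, the covectors $\rho$ that lie simultaneously in $\mathrm{range}(d\pi^T)$ and $\mathrm{range}(dF^T)$, and (b) reading off the unique preimages $\omega$ and $\zeta$.

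For (a): the kernel of $d\pi|_{(y,w)}$ is the vertical line $\Rm V_{(y,w)}$, while the kernel of $dF|_{(y,w)}$ is $\Rm X_{(y,w)}$, since $F$ is constant along geodesic orbits (so $dF(X)=0$) and $F$ is a submersion. Hence $\rho$ lies in the annihilators of both $V_{(y,w)}$ and $X_{(y,w)}$, i.e. on the line spanned by the element $X_\perp^*$ of the coframe dual to $(X,X_\perp,V)$ characterized by $X_\perp^*(X)=X_\perp^*(V)=0$, $X_\perp^*(X_\perp)=1$; thus $\rho = \lambda\,X_\perp^*|_{(y,w)}$ for some $\lambda\in\Rm$. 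Setting $(x,v):=F(y,w)\in\partial_+SM$, one has $(y,w)=\varphi_t(x,v)$ for a unique $t\in[0,\tau(x,v)]$, and the generic part of the relation will come from $t\in(0,\tau(x,v))$ and $\mu(x,v)>0$. The $\omega$-side of (b) is then immediate: since $d\pi_{(y,w)}$ maps $(X,X_\perp,V)$ to $(\dot\gamma_{x,v}(t),\ \pm(\dot\gamma_{x,v}(t))_\perp,\ 0)$, the equation $d\pi^T\omega=\rho$ forces $\langle\omega,\dot\gamma_{x,v}(t)\rangle=0$, so $\omega$ is conormal to the geodesic, and its value on $X_\perp$ pins it to $\omega=\lambda\,\omega_{x,v,t}=\lambda\,(\dot\gamma_{x,v}(t))_\perp^\flat$ (up to the overall orientation convention).

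The $\zeta$-side is the substantive computation. Differentiating $F(y',w')=\varphi_{-\tau(y',-w')}(y',w')$ and using that $dF_{(y,w)}$ takes values in $T_{(x,v)}\partial_+SM$, one obtains $dF_{(y,w)}=dF_{(x,v)}\circ d\varphi_{-t}|_{(y,w)}$, where $dF_{(x,v)}$ is the projection onto $T_{(x,v)}\partial_+SM=\mathrm{span}(V_{(x,v)},H_{(x,v)})$ along $X_{(x,v)}$ (well defined because $\mu(x,v)>0$), and $d\varphi_{-t}|_{(y,w)}$ fixes the $X$-direction and acts on the complementary $(X_\perp,V)$-plane by the inverse of the Jacobi matrix $\left[ \begin{smallmatrix} a & b \\ \dot a & \dot b \end{smallmatrix} \right]$ evaluated at $(x,v,t)$, whose determinant is the Wronskian of $a,b$, identically $1$ by \cref{eq:ab}. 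Using $X_\perp=\mu^{-1}(H-\mu_\perp X)$ to pass to the basis $(V,H)$ and then testing $\zeta\circ dF_{(y,w)}=\rho$ on $X_\perp|_{(y,w)}$ and $V|_{(y,w)}$ produces a $2\times2$ linear system for $(\zeta(V),\zeta(H))$ with coefficients built from $a,b,\dot a,\dot b,\mu$; the Wronskian identity makes it collapse to $\zeta(V)=\lambda\,b(x,v,t)$ and, with the sign fixed by the orientation conventions, $\zeta(H)=-\lambda\,\mu\,a(x,v,t)$, i.e. $\zeta=\lambda\,\eta_{x,v,t}$ as defined by \cref{eq:linsys2}. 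Running the same linear algebra in reverse shows every pair $(\lambda\eta_{x,v,t},\lambda\omega_{x,v,t})$ with $(x,v)\in\partial_+SM$, $t\in(0,\tau(x,v))$, $\lambda\in\Rm$ does occur, so the composed relation is precisely \cref{eq:relation}; the glancing locus $\mu=0$ and the endpoints $t\in\{0,\tau(x,v)\}$ are handled by continuity, and in the statement excluded by taking the open interval.

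I expect the main obstacle to be not conceptual but one of orientation and sign bookkeeping: the signs appearing in $d\pi(X_\perp)=\pm(\dot\gamma_{x,v}(t))_\perp$, in the decomposition $H=\mu_\perp X+\mu X_\perp$, in the definition of $(\cdot)_\perp^\flat$ through the almost-complex structure, and in the transpose/twist conventions implicit in the formulas for $\C_{\pi^*}$ and $\C_{F_*}$, must all be kept mutually consistent so that the $2\times2$ system returns $\eta_{x,v,t}(H)=-\mu a$ rather than $+\mu a$. Everything else --- the factorization of $dF$, the appearance of the Jacobi matrix through $d\varphi_{-t}$, and the collapse of the linear system via the Wronskian --- is routine once the composition is known to be clean, which is why that cleanliness is quoted from \cite{Holman2015,Ilmavirta2019} rather than reproved here.
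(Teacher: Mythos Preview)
The paper does not actually prove this lemma: after setting up the factorization $I_0=F_*\circ\pi^*$ and recording the canonical relations $\C_{\pi^*}$ and $\C_{F_*}$, it simply states that the composition is clean and that ``in the current notation, the composition is carried out in \cite[Lemma~6.2]{Ilmavirta2019}, with $H_{(x,v)}$ here replacing $\nabla_T|_{(x,v)}$ there.'' Your proposal therefore supplies precisely what the paper outsources, and it does so along the same lines implicit in the paper's setup: identify $\mathrm{range}(d\pi^T)\cap\mathrm{range}(dF^T)$ as the line $\Rm\,X_\perp^*$ at each $(y,w)\in SM$, read off $\omega$ from the conormality condition, and compute $\zeta$ by factoring $dF_{(y,w)}$ through $d\varphi_{-t}$ and the projection of $T_{(x,v)}SM$ onto $T_{(x,v)}\partial_+SM$ along $X$. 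The Jacobi-matrix/Wronskian mechanism you describe is exactly the content of the differential-of-the-flow formulas the paper records (and uses later in the proof of \cref{lem:scatrel}), so your linear-algebraic collapse to $\zeta(V)=\lambda b$, $\zeta(H)=-\lambda\mu a$ is on the right track. Your caveat about sign bookkeeping is well placed: with the paper's conventions $X_\perp=[X,V]$, $H=\mu_\perp X+\mu X_\perp$, and the flow-differential formulas in the appendix, the $2\times2$ system does produce the stated signs, but one must be consistent about whether $d\pi(X_\perp)$ is $+w_\perp$ or $-w_\perp$ and about the sign in the $V$-row of $d\varphi_t$; either consistent choice yields \cref{eq:linsys2} up to a global sign absorbed into $\lambda$. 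In short, your argument is correct and is essentially a reconstruction of the cited computation rather than a different route.
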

To view this as two graphs $C_{\pm} \colon T^* M \to T^* (\partial_+ SM)$, the intuition goes as follows: 
\begin{itemize}
    \item Given $\omega\in T^* M$, there exist exactly two quadruples $(x_\pm,v_\pm,t_\pm,\lambda_\pm)$, 
	\begin{align*}
	    \omega = \lambda_+ (\dot\gamma_{x_+,v_+}(t_+))_\perp^\flat = \lambda_- (\dot\gamma_{x_-,v_-}(t_-))_\perp^\flat,
	\end{align*}
	where $(x_\pm,v_\pm)\in \partial_+ SM$, $t_\pm\in (0,\tau(x_\pm,v_\pm))$ and $\pm \lambda_\pm >0$. Moreover, we have the relations $(x_+,v_+) = S(x_-,-v_-)$ (with $S$ defined in \cref{eq:scatrel}), $\tau(x_+,v_+) = \tau(x_-,v_-) = t_+ + t_-$, and $\lambda_+ = -\lambda_-$. 
    \item With $\eta_{x,v,t}$ as defined in \cref{eq:linsys2}, we can then define 
	\begin{align}
	    C_\pm (\omega) = \lambda_\pm \eta_{x_{\pm}, v_{\pm}, t_{\pm}}.
	    \label{eq:Cpm}
	\end{align} 
	\rev{More invariantly (i.e. solely in terms of the flow of integration), if $\omega\in T_x^* M$, $w\in S_x M$ is the unique vector such that $(w_\perp)^\flat \in \Rm_{+} \omega$ and with $(x_\pm, v_\pm, t_\pm)$ as above, the canonical relation can be expressed as 
	    \begin{align*}
		C_\pm (\omega) = (d\varphi_{t_\pm})^T_{(x_\pm, v_\pm)} (d\pi^T_{(x, \pm w)} \omega).
	    \end{align*}
	}
	Note the relation 
	\begin{align}
	    C_+ (-\omega) = C_- (\omega), \qquad \omega\in T^* M.
	    \label{eq:Cpm_sym}
	\end{align}
\end{itemize}
We refer to \cref{fig:CanRel} for an illustration.

\begin{figure}[htpb]
    \centering
    \includegraphics{./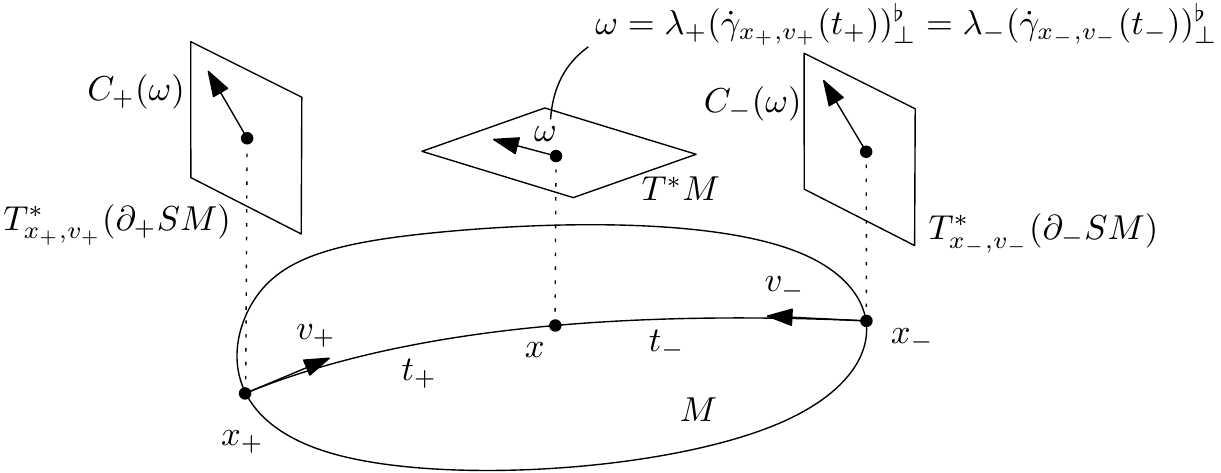}
    \caption{An illustration of the canonical relation of $I_0$. The functions $C_\pm(\omega)$ are defined in \cref{eq:Cpm}. }
    \label{fig:CanRel}
\end{figure}

As previously pointed out in \cref{sec_sampling}, the canonical relation of $I_0$ will induce maps from the semi-classical wavefront set of a function $f$ to that of its X-ray transform $I_0 f$. Indeed, applying \cref{FIO1} to $I_0$, we obtain, for every $f_h$ localized in phase space: 
\begin{align}
    \WFH (I_0 f) \backslash 0 \subset C_+ (\WFH (f) \backslash 0) \cup C_- (\WFH (f) \backslash 0),
    \label{eq:WFhmapI0}
\end{align}
where $C_{\pm}$ are defined in \cref{eq:Cpm}.

\subsection{Canonical relation of the X-ray transform in coordinate systems} \label{sec:canrelcoords}

We now explain how to view \cref{eq:linsys2} in two coordinate systems. In \cref{sec:fanbeam}, we cover fan-beam coordinates $(s,\alpha)$, where $s\in [0,L]$ parameterizes a point on $\partial M$ in $g$-arclength, and $\alpha\in [-\pi/2,\pi/2]$ parameterizes the angle of a tangent vector w.r.t. $\nu_s$. In \cref{sec:parallel}, we cover another coordinate system reminiscent to parallel geometry which holds globally on any simple surface.
    
\subsubsection{Fan-beam coordinates \texorpdfstring{$(s,\alpha)$}{(s,alpha)}}
\label{sec:fanbeam}

Suppose $\partial M$ is parameterized by arclength w.r.t. $g$, that is
\begin{align}
    \partial M = \{\Gamma(s):\ s\in [0,L]\}, \qquad g(\dot\Gamma(s), \dot\Gamma(s)) = 1, \qquad s\in [0,L],
    \label{eq:arclength}
\end{align}
and let boundary normal coordinates
\begin{align*}
    \Phi: [0,\epsilon]_\rho \times (\partial M)_s\ni (\rho,s) \mapsto \gamma_{s,\nu_s} (\rho) = \exp_s (\rho \nu_s) \in M,
\end{align*}
with $\nu_s$ the unit inward normal at $s\in \partial M$. In these coordinates, there exists a function $w(\rho,s)$ such that the metric tensor takes the form $\Phi^* g = d\rho^2 + e^{2w(\rho,s)} ds^2$, where $w(0,s) = 0$ for all $s\in [0,L]$ since $\partial M$ is parameterized in arclength, and  $-\partial_\rho w (0,s) = \two(s) >0$, with the latter being the second fundamental form at $\Gamma(s)\in \partial M$. A corresponding co-vector will be parameterized as $\xi d\rho + \eta ds$. In these variables, the geodesic equations are Hamilton's equations for the Hamiltonian ${\mathcal H}(\rho,s,\xi,\eta) = \frac{1}{2} (\xi^2 + e^{-2w(\rho,s)}\eta^2)$. On $\{{\mathcal H} = 1/2\}$, we further parameterize $\xi = \cos\alpha$, $\eta = - e^w \sin\alpha$ for some $\alpha\in \Sm^1$. The geodesic equation then becomes
\begin{align*}
    \dot \rho = \cos\alpha, \qquad \dot s = - e^{-w} \sin\alpha, \qquad \dot \alpha = - (\partial_\rho w) \sin\alpha.
\end{align*}
In these coordinates, 
\begin{align*}
    X &= \cos\alpha \,\partial_\rho - e^{-w} \sin\alpha \, \partial_s - \sin\alpha (\partial_\rho w) \partial_\alpha, \qquad V = \partial_\alpha, \\
    X_\perp &= [X,V] = \sin\alpha \,\partial_\rho + e^{-w} \cos\alpha\, \partial_s + \cos\alpha (\partial_\rho w) \partial_\alpha.
\end{align*} 
Moreover, we have $\mu = \cos\alpha$ and $\mu_\perp = V\mu = -\sin\alpha$, and in particular, 
\begin{align*}
    H = (\mu_\perp X + \mu X_\perp)|_{\rho=0} = \partial_s - \two(s) \partial_\alpha. 
\end{align*}
We arrive at the conclusions that 
\begin{align}
    d\alpha(H) = -\two(s), \qquad d\alpha(V) = 1, \qquad ds(H) = 1, \qquad ds(V) = 0.
    \label{eq:coords1transf}
\end{align}

\begin{lemma}\label{lem:etaFB}
    In fan-beam coordinates, the co-vector defined in \cref{eq:linsys2} has local expression $\eta = \eta_s ds + \eta_\alpha d\alpha$, where 
    \begin{align}
	\eta = \eta_s\ ds + \eta_\alpha\ d\alpha, \qquad \eta_\alpha = b(x,v,t), \qquad \eta_s = \two(s) b(x,v,t) -\mu a(x,v,t). 
	\label{eq:etaFB}
\end{align}    
\end{lemma}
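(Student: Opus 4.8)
The plan is to simply read off the coefficients of the covector $\eta_{x,v,t}$ in the coordinate coframe $(ds, d\alpha)$ dual to the coordinate vector fields $(\partial_s, \partial_\alpha)$, using its defining relations \cref{eq:linsys2} together with the pairing table \cref{eq:coords1transf} that has already been established in the computations preceding the statement. Since $(V_{(x,v)}, H_{(x,v)})$ is a basis of $T_{(x,v)}(\partial_+ SM)$ and, by \cref{lem:composition}, $\eta_{x,v,t}$ is uniquely determined by its values on this basis, the whole computation reduces to inverting a $2\times 2$ linear system.

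Concretely, I would write $\eta_{x,v,t} = \eta_s\, ds + \eta_\alpha\, d\alpha$ with unknown coefficients $\eta_s,\eta_\alpha$ and pair against $V$ and $H$. From \cref{eq:coords1transf} we have $ds(V)=0$ and $d\alpha(V)=1$, so the first relation in \cref{eq:linsys2}, $\eta_{x,v,t}(V) = b(x,v,t)$, gives directly $\eta_\alpha = b(x,v,t)$. Likewise $ds(H)=1$ and $d\alpha(H) = -\two(s)$, so the second relation $\eta_{x,v,t}(H) = -\mu\, a(x,v,t)$ reads $\eta_s - \two(s)\,\eta_\alpha = -\mu\, a(x,v,t)$; substituting $\eta_\alpha = b(x,v,t)$ yields $\eta_s = \two(s)\, b(x,v,t) - \mu\, a(x,v,t)$, which is exactly \cref{eq:etaFB}.

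There is essentially no analytic obstacle: all the substantive work—the identification $H = \partial_s - \two(s)\,\partial_\alpha$ on $\partial_+ SM$, and hence the pairings \cref{eq:coords1transf}—has been done already. The only point requiring care is bookkeeping of orientation and sign conventions (the sign of $\two(s)$, the direction of the inward normal $\nu_s$, and the orientation of the arclength parameter $s$), so that \cref{eq:coords1transf} is used consistently with the parameterization $\xi = \cos\alpha$, $\eta = -e^w\sin\alpha$ fixed in \cref{sec:fanbeam}; once those are in place, the lemma follows from the two-line substitution above.
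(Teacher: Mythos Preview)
Your proposal is correct and follows exactly the same route as the paper's proof: write $\eta = \eta_s\, ds + \eta_\alpha\, d\alpha$, pair with $V$ and $H$ using \cref{eq:coords1transf} to obtain $\eta(V)=\eta_\alpha$ and $\eta(H)=\eta_s-\two(s)\eta_\alpha$, then solve the resulting $2\times 2$ system using \cref{eq:linsys2}. There is nothing to add.
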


\begin{proof} We use \cref{eq:coords1transf} to arrive at 
    \begin{align*}
	\eta(V) = \eta_\alpha, \qquad \eta(H) = \eta_s - \two(s) \eta_\alpha \rev{ \stackrel{\eqref{eq:linsys2}}{=} -\mu a(x,v,t)}.
    \end{align*}
    Equation \cref{eq:etaFB} follows by inverting and combining with \cref{eq:linsys2}.    
\end{proof}

\subsubsection{Generalized parallel coordinates $(w,\t)$}
\label{sec:parallel}

\rev{We now introduce another coordinate system which is reminiscent of the classical parallel geometry (see \cref{rem:eucl}). Unlike fan-beam coordinates, these coordinates behave simply with respect to the involution of $\partial_+ SM$ fixing functions in the range of the X-ray transform. To make this point more precise, let us define the {\bf antipodal scattering relation} as the map $S_A\colon \partial_+ SM\to \partial_+ SM$ given by $S_A(x,v) := S(x,-v)$. Upon noticing the elementary property
\begin{align}
    \gamma_{x,v}(t) = \gamma_{S_A(x,v)}(\tau-t), \qquad t\in [0,\tau], \qquad \tau = \tau(x,v) = \tau(S_A(x,v)), 
    \label{eq:SA}
\end{align}
then we directly have that $I_0 f(x,v) = I_0 f(S_A(x,v))$ for any function $f$ and any $(x,v)\in \partial_+ SM$. 

On any simple Riemannian surface $(M,g)$ with boundary length $L$, we now construct a global coordinate chart $(w,\t)\in [0,L]\times [0,L/2]$ on $\partial_+ SM$ such that 
\begin{align}
    (w,\t)\circ S_A = (w+ L/2 \mod L, L/2- \t),
    \label{eq:wpsym}
\end{align}
thereby making the symmetries of $I_0 f$ more direct. \Cref{lem:etaWT} below also shows that the canonical relation of $I_0$ in these coordinates has symmetries making sampling constraints independent of whether one is attempting to resolve a singularity $\omega\in TM$ from data near $C_+(\omega)$ or $C_-(\omega)$. 

The construction of these coordinates is somewhat related to the two-boundary-point model: define the map
\begin{align}
    \Psi\colon \partial_+ SM\to (\partial M)_x \times (\partial M)_{x'}, \qquad \Psi(x,v) := (x,\pi(S(x,v))),
    \label{eq:Psi}
\end{align}
with $S$ the scattering relation defined in \cref{eq:scatrel}. } Using the arclength parameterization \cref{eq:arclength} of $\partial M$ (thought of as an $L$-periodic map), we can locally identify $(\partial M)_x \times (\partial M)_{x'}$ with subsets of $\Rm^2$ through the map $(s,s')\mapsto (\Gamma(s),\Gamma(s'))$, and the tangent space at a point $(\Gamma(s),\Gamma(s'))$ is spanned by $\partial_s, \partial_{s'}$. \rev{The proof of the following lemma is given in Appendix \ref{sec:app}}. 

\begin{lemma}\label{lem:Psi}
    $\Psi$ defined in \cref{eq:Psi} satisfies, for every $(x,v)\in \partial_+ SM$, 
    \begin{align}
	d\Psi (V) = \frac{-b(x,v,\tau)}{\mu\circ S}\ \partial_{s'}, \qquad d\Psi (H) = \partial_s + \frac{\mu}{\mu\circ S} a(x,v,\tau)\ \partial_{s'}. 
	\label{eq:dPsi}
    \end{align}    
    In particular, if $(M,g)$ is simple, $\Psi$ is a global diffeomorphism.
\end{lemma}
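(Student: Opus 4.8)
The plan is to compute the differential $d\Psi$ by differentiating $\Psi(x,v) = (x,\pi(S(x,v)))$ in the two basis directions $V$ and $H$ spanning $T_{(x,v)}(\partial_+SM)$, and to express the result in the $(\partial_s,\partial_{s'})$ frame. The first component of $\Psi$ is just the basepoint map onto $\partial M$, i.e. $\pi$ restricted to $\partial_+ SM$, so $d\pi(V)=0$ (moving in the fiber does not move the basepoint) and $d\pi(H)=\partial_s$ (since $H$ is, by construction, the horizontal lift of the oriented unit tangent $\dot\Gamma(s)$ to $\partial M$, and $\pi_*$ of that is $\partial_s$). This immediately gives the $\partial_s$-components in \cref{eq:dPsi}. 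The content of the lemma is therefore entirely in the $\partial_{s'}$-components, i.e. in understanding $d(\pi\circ S)$ applied to $V$ and $H$.

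To get those, I would recall that $S(x,v)=\varphi_{\tau(x,v)}(x,v)$, so $\pi\circ S = \gamma_{x,v}(\tau(x,v))$, and use the standard fact — already baked into \cref{eq:ab} and \cref{eq:linsys2} — that the linearization of the geodesic flow is governed by the scalar Jacobi fields $a,b$. Concretely, a perturbation of $(x,v)\in\partial_+SM$ in the $V$ direction (rotating $v$) and in the $H$ direction (sliding along $\partial M$ with the tangent lift) produces a Jacobi field along $\gamma_{x,v}$ whose value and derivative at $t=0$ are read off from the frame $(X,X_\perp,V)$, and whose value at $t=\tau$ is then $(\text{combination of }a,b)$ evaluated at $\tau$. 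The key subtlety is the $\tau$-dependence: since $S$ involves the variable exit time $\tau(x,v)$, one must use that the endpoint $\gamma_{x,v}(\tau(x,v))$ stays on $\partial M$, so the variation of the endpoint is tangent to $\partial M$; this is exactly what lets one collapse the normal component and produces the scalar factor $1/(\mu\circ S)$ (the reciprocal of $g(\dot\gamma(\tau),\nu)$ at the exit point), coming from projecting the Jacobi field's endpoint value onto $\partial_{s'}$ along $\partial M$. The cleanest route is probably to invoke \cref{lem:composition}/\cref{eq:linsys2} directly: $d(\pi\circ S)^T$ is, up to the musical/rotation identifications, governed by the same linear system $\eta(V)=b(x,v,\tau)$, $\eta(H)=-\mu a(x,v,\tau)$ specialized to $t=\tau$, and one transfers this to $d(\pi\circ S)$ itself by duality together with the identification of $T^*(\partial M)_{x'}$ with $\mathbb{R}\,\partial_{s'}$ — which is where the normalization $\mu\circ S$ enters.

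I would then record the two formulas in \cref{eq:dPsi} and finish by checking that $\Psi$ is a diffeomorphism when $(M,g)$ is simple. Injectivity plus the inverse function theorem reduce this to showing $d\Psi$ is everywhere invertible; from \cref{eq:dPsi}, in the $(\partial_s,\partial_{s'})$ basis, $d\Psi$ is lower-triangular with diagonal entries $1$ and $-b(x,v,\tau)/(\mu\circ S)$, so $\det d\Psi = -b(x,v,\tau)/(\mu\circ S)$. Simplicity gives $b(x,v,\tau)\neq 0$ (no conjugate points, $\tau\neq 0$ on the relevant set), and strict convexity gives $\mu\circ S = g(\dot\gamma(\tau),\nu)<0$, in particular nonzero; hence $d\Psi$ is invertible everywhere. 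Global injectivity of $\Psi$ follows because two distinct points of $\partial_+SM$ determine two distinct oriented geodesic chords, and on a simple surface a geodesic is determined by its two endpoints with orientation — so $(x,\pi(S(x,v)))$ determines $(x,v)$; combined with properness ($\partial_+SM$ compact) this upgrades the local diffeomorphism to a global one.

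\textbf{Main obstacle.} The only genuinely delicate step is the $\tau$-dependence in $d(\pi\circ S)$: one has to carefully account for how varying $(x,v)$ changes the exit time, and see that the resulting correction term is precisely absorbed by the requirement that the endpoint remain on $\partial M$, yielding the $1/(\mu\circ S)$ factor rather than some more complicated expression. Everything else is bookkeeping with the frame $(X,X_\perp,V)$ and the Jacobi system \cref{eq:ab}.
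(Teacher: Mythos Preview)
Your outline is correct and follows essentially the same approach as the paper: compute $d(\pi\circ S)$ by differentiating $\varphi_{\tau(x,v)}(x,v)$ via the Jacobi system, then use the tangency constraint (the endpoint stays on $\partial M$) to eliminate the $\tau$-derivative term and produce the $1/(\mu\circ S)$ factor---this is exactly the computation the paper carries out (it packages the $dS$ computation as a separate auxiliary lemma in the appendix before composing with $d\pi$, but the substance is identical). One caution: your suggested ``cleanest route'' via \cref{lem:composition}/\cref{eq:linsys2} and duality is not what the paper does and is not obviously cleaner---that lemma concerns $dF^T$ (with $F$ the footpoint map $SM\to\partial_+SM$) rather than $dS$, and extracting $d(\pi\circ S)$ from it would require unwinding the clean-intersection calculus rather than bypassing it, so stick with your direct computation.
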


From \cref{lem:Psi}, when $(M,g)$ is simple, we can think of $(s,s')$ as a global coordinate chart from $\partial_+ SM$ to an appropriate portion of $\Rm^2$. \rev{To describe such a portion, a two-point configuration $(\Gamma(s),\Gamma(s'))$ can be described fixing $s\in \Rm$ and $s'\in [s,s+L)$. This representation becomes injective once we fix a fundamental domain for the $\Zm$ action $k\cdot (s,s') = (s+kL, s'+kL)$ on $\{s\in \Rm,\ s\le s'<s+L\}$. Upon rotating the axes by introducing the coordinates
\begin{align}
    w = \frac{s+s'}{2}\circ\Psi, \qquad \t = \frac{s'-s}{2}\circ\Psi,
    \label{eq:wt}
\end{align}
the coordinate $\t$ naturally lies in $[0,L/2]$, while a segment of length $L$, say $[0,L]$, for $w$ will give the fundamental domain $[0,L]_w\times [0,L/2]_\t$. To see that $(w,\t)$ satisfy \cref{eq:wpsym}, observe that $\Psi(x,v)$ and $\Psi(S_A(x,v))$ have their points interchanged. Therefore, in the region $\{s\le s'<s+L\}$, the map $S_A$ corresponds to the transformation $(s,s')\mapsto (s',s+L)$, under which the coordinates $(w,\t)$ transform according to \cref{eq:wpsym}.

In the coordinates $(w,\t)$, we can think of $\partial_+ SM = [0,L]_w \times [0,L/2]_\t$ with $(0,\t)$ and $(L,\t)$ identified for all $\t\in [0,L/2]$. The boundary components of $\partial_+ SM$, $\{(\Gamma(s),v)\in \partial_+ SM,\ v = \partial_s\}$ and $\{(\Gamma(s),v)\in \partial_+ SM,\ v = -\partial_s\}$ identify with $\{\t=0\}$ and $\{\t=L/2\}$, respectively. 

We end this section by making explicit the canonical relation of $I_0$ in these coordinates.
}

\begin{lemma}\label{lem:etaWT}
    In parallel coordinates $(x,v) = (w,\t)$, the co-vector defined in \cref{eq:linsys2} has local expression 
    \begin{align}
	\begin{split}
	    \eta &= \eta_w\ dw + \eta_\t\ d\t, \quad \text{where} \quad \left\{
	    \begin{array}{cl}
		\eta_w &= \mu(S_A(x,v)) \ \frac{b(x,v,t)}{b(x,v,\tau)} - \mu(x,v) \frac{b(S_A(x,v),\tau-t)}{b(S_A(x,v),\tau)}, \\
		\eta_\t &= \mu(S_A(x,v))\ \frac{b(x,v,t)}{b(x,v,\tau)} + \mu(x,v) \frac{b(S_A(x,v),\tau-t)}{b(S_A(x,v),\tau)}.
	    \end{array}
	    \right. 
	\end{split}
	\label{eq:etaWT}
    \end{align}    
\end{lemma}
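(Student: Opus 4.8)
The plan is to compute $\eta_w$ and $\eta_\t$ by transition from the fan-beam expression \cref{eq:etaFB} of \cref{lem:etaFB}, using the change of coordinates $(s,\alpha)\mapsto (w,\t)$ on $\partial_+ SM$. The essential geometric input is \cref{lem:Psi}: the map $\Psi(x,v)=(s,s')=(s,\pi(S(x,v)))$ has differential given by \cref{eq:dPsi}, so that the pullbacks of $\partial_s,\partial_{s'}$ (equivalently, of $ds, ds'$) are explicit in terms of the frame $(V,H)$ and the Jacobi fields $a,b$ evaluated at time $\tau$. First I would express $ds$ and $ds'$ as covectors on $\partial_+ SM$ by inverting \cref{eq:dPsi}: from $d\Psi(V)$ and $d\Psi(H)$ one reads off $ds(V), ds(H), ds'(V), ds'(H)$, hence $ds$ and $ds'$ in the basis dual to $(V,H)$. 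Then, since $w=(s+s')/2$ and $\t=(s'-s)/2$ (via \cref{eq:wt}), I write $dw = \tfrac12(ds+ds')$ and $d\t = \tfrac12(ds'-ds)$, and correspondingly $\eta_w = \eta(\partial_w)$ etc., where $\partial_w = \partial_s + \partial_{s'}$, $\partial_\t = \partial_{s'}-\partial_s$ in the two-point model.

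Concretely, the computation runs as follows. From \cref{eq:linsys2} we have the two scalar equations $\eta(V)=b(x,v,t)$ and $\eta(H)=-\mu(x,v)\,a(x,v,t)$. From \cref{eq:dPsi}, $\partial_s$ corresponds to $H$ minus an appropriate multiple of $\partial_{s'}$, and $\partial_{s'}$ corresponds to $-\tfrac{\mu\circ S}{b(x,v,\tau)}V$; inverting gives $\eta(\partial_{s'})$ and $\eta(\partial_s)$ in terms of $\eta(V),\eta(H)$. After substituting \cref{eq:linsys2}, one obtains
$\eta(\partial_{s'}) = -\,\mu(S_A(x,v))\,\dfrac{b(x,v,t)}{b(x,v,\tau)}$ (using $\mu\circ S = \mu(S_A(x,v))$ up to the sign convention built into $\mu$, which must be tracked carefully — see below), and a corresponding expression for $\eta(\partial_s)$. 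Then $\eta_w = \eta(\partial_s)+\eta(\partial_{s'})$ and $\eta_\t = \eta(\partial_{s'})-\eta(\partial_s)$ yield \cref{eq:etaWT} once one recognizes the term $\mu(x,v)\,a(x,v,t)$ as expressible, via the reversal identity \cref{eq:SA} and the standard Jacobi-field reciprocity relating $a(x,v,t)$ and $b(S_A(x,v),\tau-t)/b(S_A(x,v),\tau)$, in the symmetric form appearing on the right-hand side.

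The main obstacle I expect is twofold and is essentially bookkeeping rather than conceptual: (i) keeping the sign and orientation conventions consistent between $\mu$ at $(x,v)$ and at $S(x,v)$ (the outgoing endpoint has $g(v,\nu)\le 0$, so $\mu\circ S \le 0$ in the convention of the paper, and the $\partial_{s'}$ direction inherits a sign from the orientation of $\partial M$), and (ii) verifying the Jacobi-field identity that converts $\mu(x,v)a(x,v,t)$ into $\mu(x,v)\,b(S_A(x,v),\tau-t)/b(S_A(x,v),\tau)$. The latter is the statement that the Jacobi field $a$ of the forward flow, normalized by $b(\cdot,\tau)$, equals (up to the explicit factor) the Jacobi field $b$ of the time-reversed flow — this follows from the symplectic/Wronskian structure of \cref{eq:ab} together with \cref{eq:SA}, applied at the endpoint $t=\tau$; I would isolate it as a short preliminary identity. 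Once these two points are settled, the substitution into $\eta_w,\eta_\t$ is direct and gives \cref{eq:etaWT}. Since \cref{lem:etaWT} is among the results the excerpt says are proved in \cref{sec:app}, I would present this argument there, immediately following the proof of \cref{lem:Psi} so that \cref{eq:dPsi} is available.
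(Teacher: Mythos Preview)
Your approach is essentially the paper's: both compute $\eta_w,\eta_\t$ directly from the defining relations \cref{eq:linsys2} by expressing $(\partial_w,\partial_\t)$ in the frame $(V,H)$ via \cref{eq:dPsi}, then simplify with a Jacobi-field reversal identity and the sign relation $-\mu\circ S=\mu\circ S_A$. The paper sets up a $2\times 2$ system in $(dw,d\t)$ and obtains an intermediate expression \cref{eq:firstExpr} before simplifying; your detour through $(s,s')$ is a harmless repackaging of the same linear algebra.

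One correction is needed in your anticipated identity. It is \emph{not} true that $a(x,v,t)$ alone equals $b(S_A(x,v),\tau-t)/b(S_A(x,v),\tau)$; what you actually need (and what will appear in your $\eta(\partial_s)$ once you compute it) is the combination
\[
a_\tau(x,v)\, b(x,v,t) - b_\tau(x,v)\, a(x,v,t) \;=\; -\, b(S_A(x,v),\tau-t),
\]
together with $b_\tau(x,v)=b_\tau(S_A(x,v))$. Both follow, as you correctly anticipate, from the fact that each side solves the same Jacobi initial-value problem with data prescribed at $t=\tau$ via \cref{eq:SA}. Two minor points: despite your opening sentence, you are using \cref{eq:linsys2} directly rather than the fan-beam expression \cref{eq:etaFB} (the paper does the same); and in the paper the proof of \cref{lem:etaWT} is given inline in \cref{sec:canrelcoords}, not in \cref{sec:app} --- only \cref{lem:Psi} is deferred there.
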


\begin{proof}
    We first show that 
    \begin{align}
	\eta_w = \frac{\mu a_\tau - \mu\circ S}{b_\tau} b(x,v,t) - \mu a(x,v,t), \qquad \eta_\t = \frac{-\mu a_\tau - \mu\circ S}{b_\tau} b(x,v,t) + \mu a(x,v,t),
	\label{eq:firstExpr}
    \end{align}
    where we denote for short $a_\tau (x,v) = a(x,v,\tau(x,v))$ and similarly for $b_\tau$. To this end, we compute the change of basis from $(H,V)$ to $(\partial_w, \partial_\t)$:
    \begin{align*}
	dw(V) &= d\Psi(V) \left( \frac{s+s'}{2} \right) \stackrel{\cref{eq:dPsi}}{=} \frac{-b_\tau}{2\mu\circ S}, \qquad \text{and similarly,} \\
	dw(H) &= \frac{\mu\circ S + \mu a_\tau}{2 \mu\circ S}, \qquad d\t(V) = \frac{-b_\tau}{2\mu\circ S}, \qquad d\t(H) = \frac{-\mu\circ S+ \mu a_\tau}{2\mu\circ S}.
    \end{align*}
    Now we seek for a covector in the form $\eta = \eta_w\ dw + \eta_\t\ d\t$ solving \cref{eq:linsys2}. This looks like
    \begin{align*}
	\left[
	    \begin{array}{cc}
		-b_\tau & -b_\tau \\ \mu a_\tau + \mu\circ S & \mu a_\tau - \mu\circ S	
	    \end{array}
	\right] \!
	\left[
	    \begin{array}{c}
		\eta_w \\ \eta_\t
	    \end{array}
	\right]\! =\! 2\mu\circ S \left[
	    \begin{array}{c}
		\eta(V) \\ \eta(H)
	    \end{array}
	\right] \stackrel{\cref{eq:linsys2}}{=} 2\mu\circ S \left[
	    \begin{array}{c}
		b(x,v,t) \\ -\mu a(x,v,t) 
	    \end{array}
	\right]\! .
    \end{align*}
    Inverting this system yields \cref{eq:firstExpr}. Now to go from \cref{eq:firstExpr} to \cref{eq:etaWT}, we first rewrite \cref{eq:firstExpr} as 
    \begin{align*}
	\eta_w &= \frac{1}{b_\tau} (-\mu\circ S\ b(x,v,t) + \mu (a_\tau b(x,v,t) - b_\tau a(x,v,t)), \\
	\eta_\t &= \frac{1}{b_\tau} (-\mu\circ S\ b(x,v,t) - \mu (a_\tau b(x,v,t) - b_\tau a(x,v,t)).
    \end{align*}
    Now, first notice that $-\mu \circ S = \mu \circ S_A$ (equivalent to saying that $\cos(\theta+\pi) = -\cos\theta$). Finally, \cref{eq:etaWT} follows upon noticing that 
    \begin{align*}
	a_\tau(x,v) b(x,v,t) - b_\tau(x,v) a(x,v,t) = - b(S_A(x,v),\tau-t), \qquad 0\le t\le \tau = \tau(x,v).
    \end{align*}
    That this is true follows from the fact that both left- and right-hand sides are the unique solution to the initial value problem
    \begin{align*}
	\ddot c (x,v,t) + \kappa(\gamma_{x,v}(t)) c(x,v,t) = 0, \qquad c(x,v,\tau) = 0, \qquad \dot c(x,v,\tau) = 1,
    \end{align*}
    using \cref{eq:SA}. Evaluating at $t=0$ also yields the relation $b_\tau(x,v) = b_{\tau}(S_A(x,v))$, and hence \cref{eq:etaWT} follows. 
\end{proof}

\subsection{Sensitivity of \cref{eq:WFhmapI0} to geometric parameters and coordinate systems}\label{sec:sensitivity}

What drives most of the sampling heuristics that follow is the mapping property \cref{eq:WFhmapI0}, which at the elementary level, is encoded in the covectors $\eta_{x,v,t}$ given in Equation \cref{eq:linsys2}. Relative to a given coordinate system, the larger the dual components of these covectors, the finer the sampling of $I_0 f$ will have to be in order to resolve details of $f$ located near $\gamma_{x,v}(t)$ with direction $(\dot\gamma_{x,v}(t))_\perp$. From expressions \cref{eq:etaFB} and \cref{eq:etaWT}, we can therefore draw the following general conclusions, some of which will be illustrated in \cref{sec:numerics}: 
\begin{enumerate}
    \item The components of \cref{eq:etaFB} can become large due to large Jacobi fields along long geodesics. In particular, sampling in fan-beam coordinates is sensitive to Jacobi field magnitude, and can lead to prohibitive sampling requirements in negative curvature at ``long'' geodesics. By contrast, the components of \cref{eq:etaWT}, expressed as ratios of Jacobi fields, are better behaved in that case. 
    \item The components of \cref{eq:etaFB} are not sensitive to the emergence of conjugate points per se. By contrast, as the quantity $b_\tau$ becomes small for $(x,v)$ not necessarily tangential (predicting the emergence of conjugate pairs close to $\partial M$ but outside of $M$), the components of $\eta$ in parallel coordinates \cref{eq:etaWT} can become large, thus degrading the sampling rates near those geodesics. This is to be expected since this coordinate system cannot be made global in the presence of conjugate points. 
    \item The components of $\eta_{x,v,t}$ and $\eta_{S_A(x,v)}(\tau-t)$ in fan-beam coordinates \cref{eq:etaFB}, images of the same singularity under the canonical relation of $I_0$, can have quite different sizes, and as such, can drive different sampling requirements on the same details of $f$. By contrast, the expression \cref{eq:etaWT} in parallel coordinates of both covectors is identical in size, and this issue does not appear. 
\end{enumerate}

\subsection{Simple geodesics disks in constant curvature spaces}
\label{sec:CCDs}

As the numerical illustrations that follow will focus on a benchmark case of simple geodesic disks in constant curvature spaces, we gather all important formulas and computations in the present section. Following notation in \cite{Mishra2019,Monard2019a}, let $\Dm_R = \{ (x,y)\in \Rm^2, x^2+y^2\le R^2\}$, and for $\kappa\in \Rm$ such that $|\kappa R^2|<1$, equip $\Dm_R$ with the metric
\begin{align}
    g_\kappa(z) = c_\kappa(z)^{-2} |dz|^2, \qquad c_\kappa(z) = 1+\kappa |z|^2, \qquad z := x+iy,
    \label{eq:gkappa}
\end{align}
of constant curvature $4\kappa$. For conciseness of notation, complex notation will be used, writing a point $z = x+iy \in \Dm_R$, and a tangent vector $v = v_x \partial_x + v_y \partial_y\in T_{(x,y)}\Dm_R$ as $v_x + i v_y$ (the almost-complex structure corresponds to multiplication by $i$). In what follows, the dependence on $R$ and $\kappa$ of many quantities will be left implicit. 

\subsubsection{Relevant geometric quantities}

By direct computation, the boundary $\partial \Dm_R$ has $g_\kappa$-length 
\begin{align}
    L = \frac{2\pi R}{1+\kappa R^2},
    \label{eq:length}
\end{align}
and a parameterization by $g_\kappa$-arclength is given by $\Gamma(s) = R e^{i 2\pi s/L}$ for $s\in [0,L]$, with inward-pointing normal $\nu(s) = i\dot\Gamma(s) = (1+\kappa R^2) i e^{i 2\pi s/L}$. Its second fundamental form is constant and, using the Gauss-Bonnet theorem on $\Dm_R$, equal to 
\begin{align}
    \two(s) = - g_\kappa(\nabla_{\dot\Gamma(s)} \dot\Gamma (s), \nu(s)) = \frac{1-\kappa R^2}{R}, \qquad s\in [0,L],
    \label{eq:two}
\end{align}
with $\nabla$ the $g_\kappa$-Levi-Civita connection. Equations \cref{eq:length} and \cref{eq:two} tell us how simplicity degenerates in limiting cases: as $\kappa R^2 \to 1$, the boundary curvature goes to zero, as $(\Dm_R, g_\kappa)$ becomes a hemisphere (of totally geodesic boundary); as $\kappa R^2\to -1$, the boundary length becomes infinite, and so does any geodesic, as $(\Dm_R, g_\kappa)$ becomes a full, noncompact hyperbolic space. 

\subsubsection{Fan-beam coordinates}

The fan-beam chart in this case is given by 
\begin{align*}
    [0,L]\times [-\pi/2,\pi/2]\ni (s,\alpha) \mapsto (x = \Gamma(s), v = c_\kappa(\Gamma(s)) e^{i (2\pi s/L + \pi + \alpha)}),
\end{align*}
and following \cite[Sec. 2]{Mishra2019}, the unique unit-speed $g_\kappa$-geodesic passing through $(s,\alpha)$ at $t=0$ has expression
\begin{align}
    \gamma_{s,\alpha}(t) = T_{s,\alpha} (x(t)) = e^{2\pi i s/L} \frac{R- x(t) e^{i\alpha}}{1+\kappa R e^{i\alpha} x(t)}, 
    \label{eq:geodesic}
\end{align}
where 
\begin{align}
    x(t) := \left\{
    \begin{array}{ll}
	\frac{1}{\sqrt{-\kappa}} \tanh (\sqrt{-\kappa}\ t), & t\in \Rm,\ \kappa <0, \\
	t, & t\in \Rm,\ \kappa = 0, \\
    \frac{1}{\sqrt{\kappa}}\tan(\sqrt{\kappa}\ t), & t\in \left( -\frac{\pi}{2\sqrt{\kappa}}, \frac{\pi}{2\sqrt{\kappa}} \right),\ \kappa>0.
    \end{array}
\right.
\label{eq:xoft}
\end{align}
Such a geodesic exits the domain at the smallest $t>0$ such that $|T_{s,\alpha}(x(t))|^2 = R^2$. Solving this equation produces a unique solution $x(\tau) = \frac{2R}{1-\kappa R^2} \cos \alpha$, which then determines $\tau(s,\alpha) = \tau(\alpha)$:
\begin{align}
    \tau(\alpha) = \left\{
    \begin{array}{ll}
	\frac{1}{\sqrt{-\kappa}} \tanh^{-1} \left( \frac{2\sqrt{-\kappa}R}{1-\kappa R^2} \cos\alpha \right), & \kappa<0, \\
	2R\cos\alpha, & \kappa = 0, \\
	\frac{1}{\sqrt{\kappa}} \tan^{-1} \left( \frac{2\sqrt{\kappa}R}{1-\kappa R^2} \cos\alpha \right), & \kappa>0.
    \end{array}
    \right.
    \label{eq:tau}
\end{align}
Since the curvature is constant, the scalar Jacobi functions $a(x,v,t)$ and $b(x,v,t)$ are independent of $(x,v)$, with expressions 
\begin{align}
    a(t) = \left\{
    \begin{array}{ll}
	\cosh (2\sqrt{-\kappa}\ t) & \kappa<0, \\
	1 & \kappa = 0, \\
	\cos (2\sqrt{\kappa}\ t) & \kappa>0,
    \end{array} 
    \right. \qquad 
    b(t) = \left\{
    \begin{array}{ll}
	\frac{1}{2\sqrt{-\kappa}} \sinh (2\sqrt{-\kappa}\ t) & \kappa<0, \\
	t & \kappa = 0, \\
	\frac{1}{2\sqrt{\kappa}} \sin (2\sqrt{\kappa}\ t) & \kappa>0.
    \end{array}
    \right.
    \label{eq:CCD_ab}
\end{align}

\begin{remark} Expressions \cref{eq:xoft}-\cref{eq:CCD_ab} can all be regrouped into a single expression upon setting $\sqrt{\kappa} = i \sqrt{-\kappa}$ when $\kappa<0$ and using the relations between trig and htrig functions.     
\end{remark}

The following generalized trigonometric identities hold regardless of the sign of $\kappa$: 
\begin{align}
    a(t \pm t') = a(t)a(t') \mp 4\kappa b(t) b(t'), \qquad b(t \pm t') = b(t) a(t') \pm a(t) b(t'), \qquad t,t'\in \Rm.
    \label{eq:trig}
\end{align}

\smallskip
\noindent{\bf Computation of $\eta_{s,\alpha,t}$:} Fixing $(s,\alpha)\in \partial_+ S_{(\kappa)} \Dm_R$, we can now compute the expression of $\eta_{s,\alpha,t}$ in \cref{eq:etaFB}: for $t\in (0,\tau(\alpha))$
\begin{align}
    \begin{split}
	\eta_{s,\alpha, t} &= (\two b(t) - \cos \alpha a(t))\ ds + b(t)\ d\alpha \\
	&= \left( \frac{1-\kappa R^2}{R} b(t) - \cos\alpha\ a(t) \right)\ ds + b(t)\ d\alpha.	
    \end{split}
    \label{eq:etasal}    
\end{align}

\subsubsection{Generalized parallel coordinates}

From \cref{eq:geodesic} and \cref{eq:tau}, the other endpoint of $\gamma_{s,\alpha}$ has expression $T_{s,\alpha} (x(\tau(\alpha))) = \Gamma(s')$, where
\begin{align}
    s'(s,\alpha) = s + \frac{L}{2\pi} \left( \pi + 2 \ss(\alpha) \right), \qquad \ss(\alpha) := \tan^{-1} \left( \frac{1-R^2\kappa}{1+R^2\kappa} \tan\alpha\right).
    \label{eq:endpoint}
\end{align}
The generalized parallel coordinates are then given by 
\begin{align}
    \begin{split}
	w(s,\alpha) &= \frac{s'+s}{2} \mod L = s + \frac{L}{2\pi} \left( \frac{\pi}{2} + \ss(\alpha) \right) \mod L, \\
	\t(s,\alpha) &= \frac{s'-s}{2} = \frac{L}{2\pi} \left( \frac{\pi}{2} + \ss(\alpha) \right), 	
    \end{split}
    \label{eq:CCD_FB2WT}
\end{align}
with inverse map $(w,\t)\mapsto (s,\alpha)$ given by
\begin{align}
    s(\t,w) = w - \t \mod L, \qquad \alpha(\t,w) = \ss^{-1} \left( \frac{2\pi \t}{L} - \frac{\pi}{2} \right). 
    \label{eq:CCD_WT2FB}
\end{align}

\begin{remark}[Euclidean case]\label{rem:eucl} In the Euclidean unit disk, where $L = 2\pi$ and $\ss = id$, one obtains
    \begin{align*}
	w = s + \alpha + \frac{\pi}{2}, \qquad \t = \alpha + \pi/2. 
    \end{align*}
    Traditional parallel coordinates for the Radon transform are $(q, \theta)$, where $q$ denotes the distance of the line to the origin, and $\theta$ is orthogonal to the direction of integration. In our case, $w$ recovers $\theta$, and using that $\alpha = \sin^{-1} q$, we have that 
    \begin{align*}
	\t = \frac{\pi}{2} + \sin^{-1} q.
    \end{align*}
    This change of variable is diffeomorphic from $[-1,1]$ to $[0,\pi]$ with singular Jacobian at the endpoints due to squareroot singularities (consider the relation $\sin p = \sqrt{1-q^2}$ near $p=0,\pi$).
\end{remark}

\smallskip
\noindent{\bf Computation of $\eta_{w,\t,t}$:} We start from \cref{eq:etaWT} and first notice that in this rotationally symmetric case, $\mu(S_A(x,v)) = \mu(x,v)$, and thus
\begin{align*}
     \eta = \eta_w\ dw + \eta_\t\ d\t, \qquad \eta_w = \mu \frac{b(t) - b(\tau-t)}{b(\tau)}, \qquad \eta_p = \mu \frac{b(t) + b(\tau-t)}{b(\tau)}.
\end{align*}
Now observe from the trigonometric identities \cref{eq:trig} that 
\begin{align*}
    b(t_1) \pm b(t_2) = 2 b\left( \frac{t_1\pm t_2}{2} \right) a\left( \frac{t_1 \mp t_2}{2} \right), \qquad t_1, t_2\in \Rm. 
\end{align*}
Hence
\begin{align*}
    \eta_w = \mu \frac{2 b(t-\tau/2) a(\tau/2)}{2b(\tau/2) a(\tau/2)} = \mu \frac{b(t-\tau/2)}{b(\tau/2)}, \quad \eta_p = \mu \frac{2 b(\tau/2) a(t-\tau/2)}{2b(\tau/2) a(\tau/2)} = \mu \frac{a(t-\tau/2)}{a(\tau/2)}.
\end{align*}

We record the conclusion here: 
\begin{align}
    \eta_{w,\t,t} = \mu\left( \frac{b(t-\tau/2)}{b(\tau/2)}\ dw + \frac{a(t-\tau/2)}{a(\tau/2)}\ d\t \right), \quad 0\le t\le \tau. \quad (\mu = \mu(\t),\ \tau = \tau(\t)).
    \label{eq:etaCCD_WT}
\end{align}

\section{Numerical illustrations}
\label{sec:numerics}

In this section, we will use the language of {\em classical} sampling by replacing the small parameter $h>0$ with a large parameter (proportional to) the band limit $B$ and, on the discrete size, the meshsize will be replaced by the number of gridpoints $N$. All Fourier transforms will be classical (no $h$-rescaling) and, although discretized via Discrete Fourier Transforms, space variables and dual variables will be rescaled so as to represent discretization-independent quantities $x,\xi$ at the discrete level. The analysis of \cref{sec_sampling} then applies asymptotically as $B\to \infty$ and $N\to \infty$, though for the purposes of the experiments, we will see that these asymptotic results are well-illustrated for finite $B,N$. 

\subsection{Preliminaries: phantoms, band limit and bowties} \label{sec:prelim}

We will restrict the experiments below to the case $R=1$ (unit disk), varying $\kappa$. The phantoms used are displayed \cref{fig:functions}. 

\begin{figure}[htpb]
    \centering
    \begin{subfigure}{0.23\textwidth}
	\centering
	\includegraphics[height=0.13\textheight]{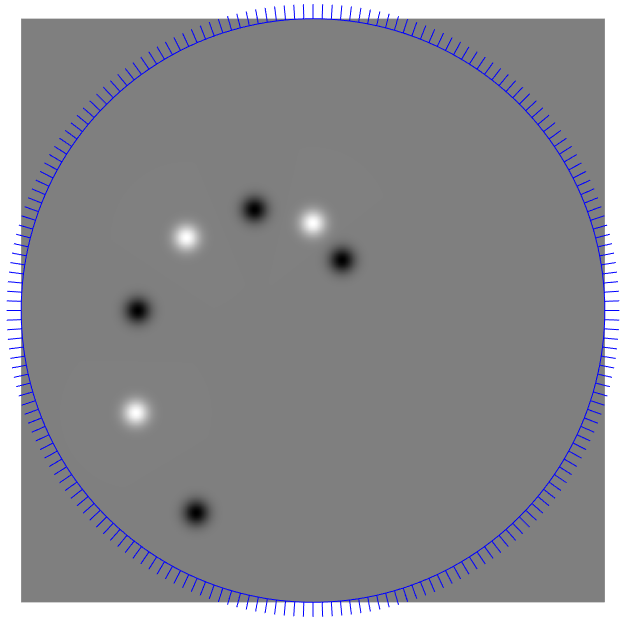} \\
	\includegraphics[height=0.14\textheight]{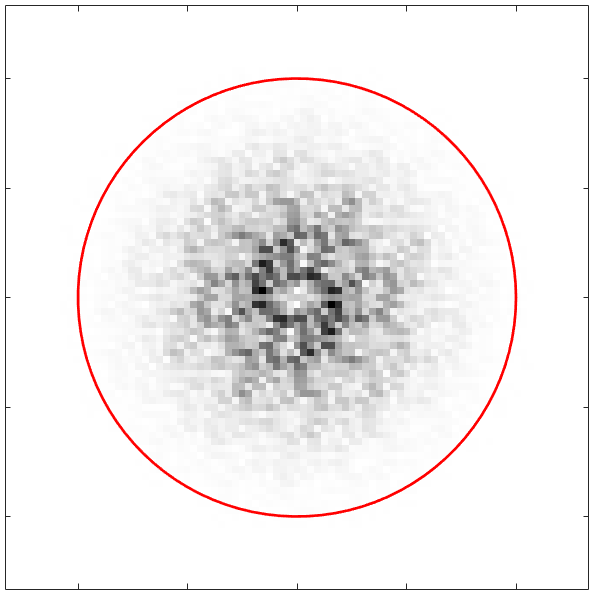} \\	
	\caption{$f_0$ defined in \\ Example 2}
	\label{fig:f0}
    \end{subfigure}
    \begin{subfigure}{0.23\textwidth}
	\centering
	\includegraphics[height=0.13\textheight]{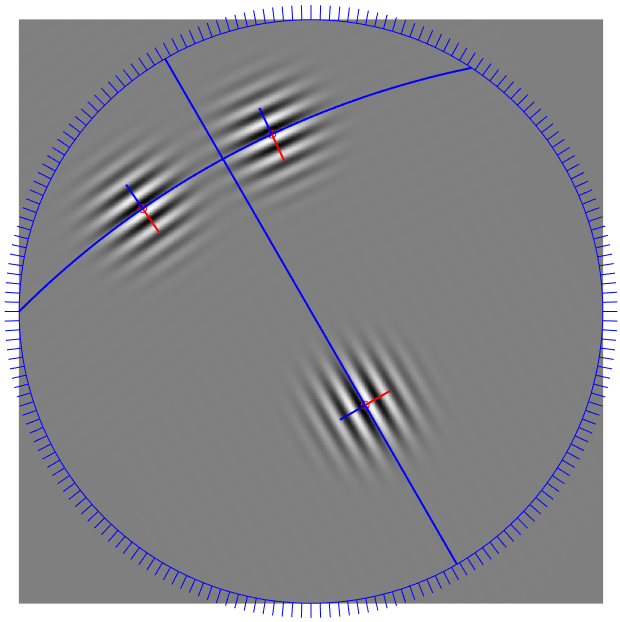} \\
	\includegraphics[height=0.14\textheight]{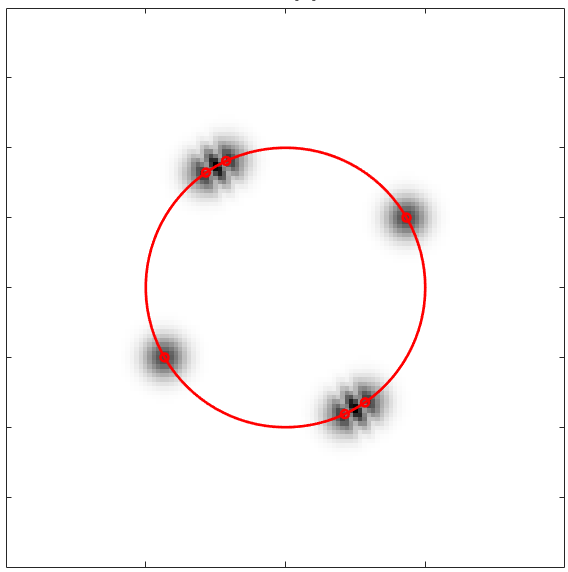}
	\caption{$f_1$ defined in \cref{eq:f1} \\ for $\kappa=0.3$}
	\label{fig:Ex1}    
    \end{subfigure}
    \begin{subfigure}{0.23\textwidth}
	\centering
	\includegraphics[height=0.13\textheight]{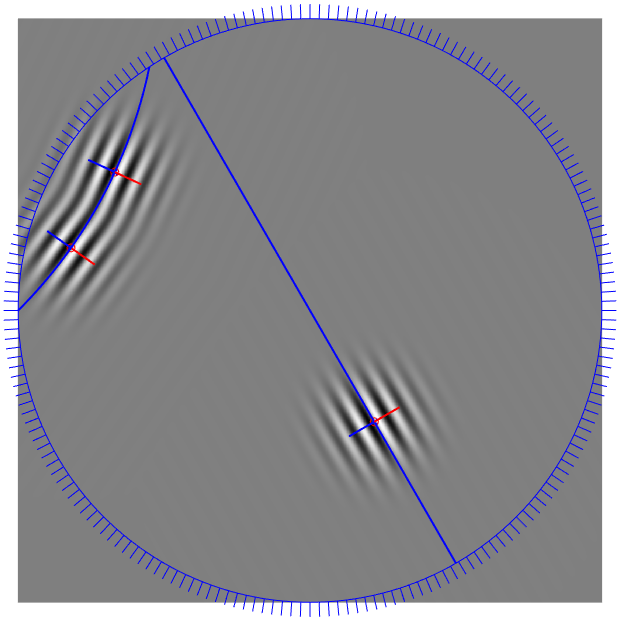} \\
	\includegraphics[height=0.14\textheight]{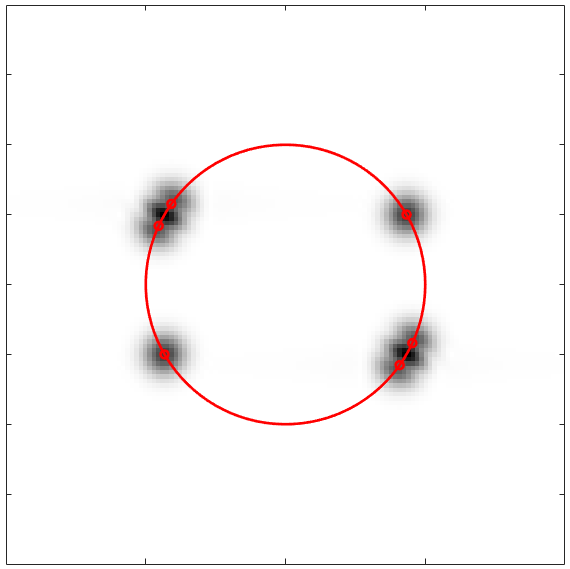}
	\caption{$f_1$ defined in \cref{eq:f1} \\ for $\kappa=-0.3$}
	\label{fig:Ex2}   
    \end{subfigure}
    \begin{subfigure}{0.25\textwidth}
	\centering
	\includegraphics[height=0.13\textheight]{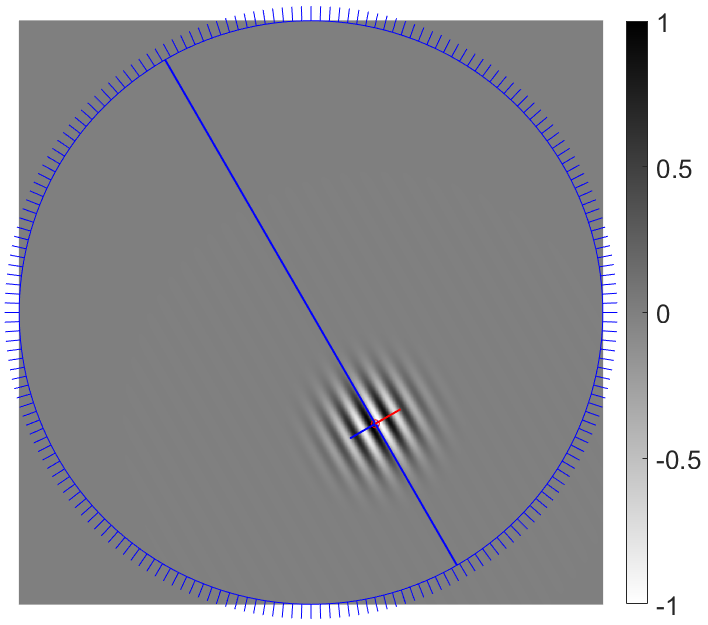} \\
	\includegraphics[height=0.14\textheight]{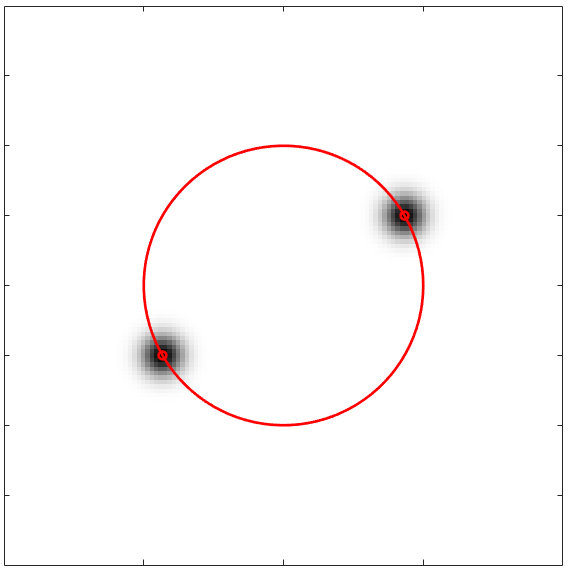}
	\caption{$f_4$, \\ $\kappa = -0.3$}
	\label{fig:Ex4}   
    \end{subfigure}    
    \caption{Top row: the functions appearing in all experiments, supported in the unit disk $\Dm_1$, all on the same color scale. Bottom row: modulus of their Fourier transform, with the circle $\{|\xi|=100\}$ in red ($\xi$: dual variable for the continuous Fourier transform).
    }
    \label{fig:functions}
\end{figure}

\smallskip
\noindent{\bf Band limits.} The band limit condition \cref{eq:band limit} need not be the most natural in geometric contexts, especially when mapping band limit requirements through the canonical relation of $I_0$. To be slightly more general, we can think that imposing a band limit on $f_h \colon M\to \Rm^n$ is done using a set-theoretic constraint of the form
\begin{align}
    \WFH (f) \subset B\cdot\Sigma:= \{(x,B\omega): (x,\omega)\in \Sigma\}
    \label{eq:band limit2}
\end{align}
for some 'unit' bounded set $\Sigma \subset T^* M$ and number $B>0$. Natural assumptions on $\Sigma$ are that its fibers are {\em absorbing} and {\em even}, in the following sense: for all $x\in M$, if $\omega \in \Sigma_x$, then $\lambda\omega\in \Sigma_x$ for all $\lambda\in [0,1]$ and $-\omega\in \Sigma_x$. We will then say that $f$ is $B\Sigma$-band limited whenever it satisfies \cref{eq:band limit2}.

For example, condition \cref{eq:band limit} corresponds to taking $\Sigma$ to be the unit Euclidean co-cube bundle of $M$. Below, however, it will be more natural to consider either of the following cases: 
\begin{itemize}
    \item If $M=\Rm^n$, assuming that $\hat f$ is supported in the (Euclidean) ball of radius $B$ corresponds to taking $\Sigma$ to be the unit Euclidean co-ball bundle of $M$, denoted $B_e^* M$. From the Fourier plots \cref{fig:functions}, all integrands considered satisfy a band limit condition of this form.
    \item $\Sigma$ could also be the unit co-ball bundle of $M$ associated with some metric. In this context of geodesic X-ray transform, assuming that the X-ray transform is computed using Riemann sums in intrinsic geodesic time with stepsize $h$, constraints on $h$ in terms of the band limit of $f$ would be the sharpest when expressed in terms of $\Sigma = B^* M$. 
    \item If $f$ has support restrictions, one may further enforce the condition $[\Sigma]_x = \{0\}$ for all $x$'s not belonging to the support of $f$.  
\end{itemize}

\smallskip
\noindent{\bf Bowties.} Under a band limit assumption of the form \cref{eq:band limit2} on a function $f$, to control the Fourier support of $I_0 f$, equation \cref{eq:WFhmapI0} will give 
\begin{align*}
    \WFH (I_0 f)\backslash\{0\} \subset C_+ (B\cdot\Sigma) \cup C_-(B\cdot \Sigma) = B\cdot (C_+ (\Sigma)\cup C_-(\Sigma)),
\end{align*}
where the last equality comes from the fiberwise homogeneity of the canonical relation. Since $\Sigma$ is fiberwise even, $C_+ (\Sigma) = C_+ (-\Sigma) = C_-(\Sigma)$, and thus the previous equation simplifies into
\begin{align*}
    \WFH (I_0 f)\backslash\{0\} \subset B\cdot C_+ (\Sigma),
\end{align*}
where the right-hand side is a set-theoretic upper bound on the Fourier content of $I_0 f$. In what follows, we will often draw \rev{the boundaries of the sets $[C_+(B\cdot\Sigma)]_{(x,v)} \subset T^*_{(x,v)} \partial_+ SM$} for $(x,v)\in \partial_+ SM$. Such sets depend on the \rev{base point $(x,v)$ and on the} coordinate system we choose\rev{. In} the examples below, they generally take the shape of vertically oriented 'bowties', see e.g. \cref{fig:Ex1_data} through \cref{fig:bowtiesE3}. \rev{This is because a given set $[C_+(B\cdot\Sigma)]_{(x,v)}$ is always a subset of (a radially scaled version of) $[C_+ (B^* M)]_{(x,v)} = \{\lambda \eta_{x,v,t},\ \lambda\in [-1,1],\ t\in [0,\tau(x,v)]\}$. Based on formulas \cref{eq:etasal} and \cref{eq:etaCCD_WT}, the portion of the latter set for $\lambda\in [0,1]$ is included in a cone in either the half-fiber $\{\eta_\alpha\ge 0\}$ in fan-beam coordinates, or $\{\eta_\t\ge 0\}$ in parallel coordinates, with aperture less than $\pi$ (in fact, exactly $\pi/2$ in parallel coordinates).} 

\subsection{Coordinate sytems} 

\smallskip
\noindent{\bf Example 1: change of coordinates.} The changes of coordinates $(s,\alpha) \leftrightarrow (w,\t)$ given in equations \cref{eq:CCD_FB2WT} and \cref{eq:CCD_WT2FB} are displayed in \cref{fig:coords}, by showing the images of Cartesian grids for each coordinate system.  

\begin{figure}[htpb]
    \centering
    \begin{subfigure}[b]{\textwidth}
	\centering
	\begin{tabular}{ccccc}
	    \includegraphics[trim = 30 30 30 30, clip, width=0.16\textwidth]{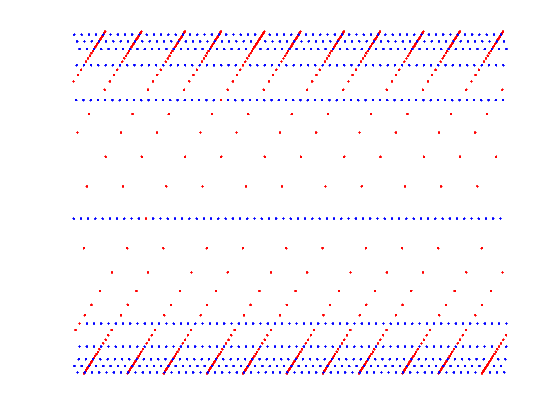} &
	    \includegraphics[trim = 30 30 30 30, clip, width=0.16\textwidth]{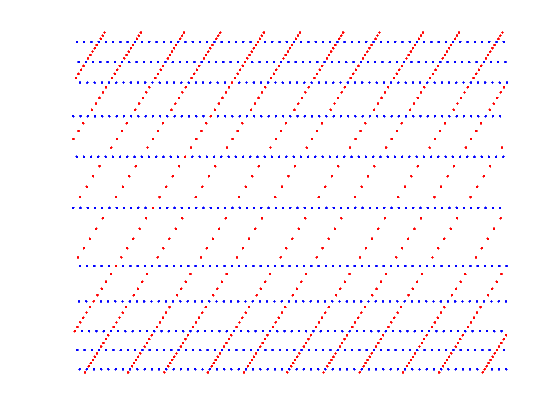} &
	    \includegraphics[trim = 30 30 30 30, clip, width=0.16\textwidth]{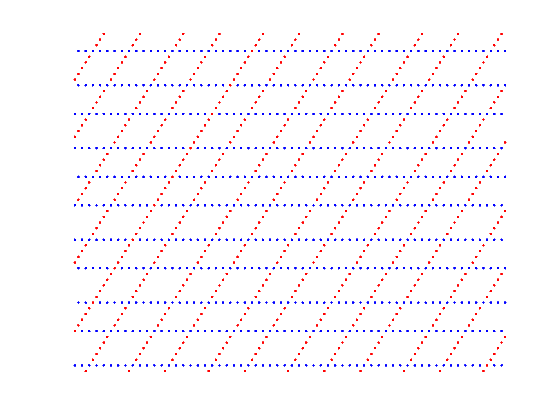} &
	    \includegraphics[trim = 30 30 30 30, clip, width=0.16\textwidth]{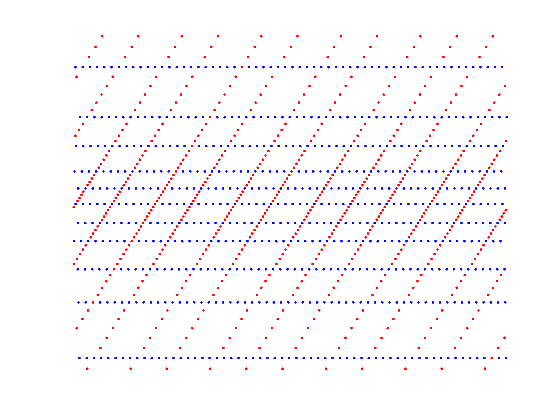} &
	    \includegraphics[trim = 30 30 30 30, clip, width=0.16\textwidth]{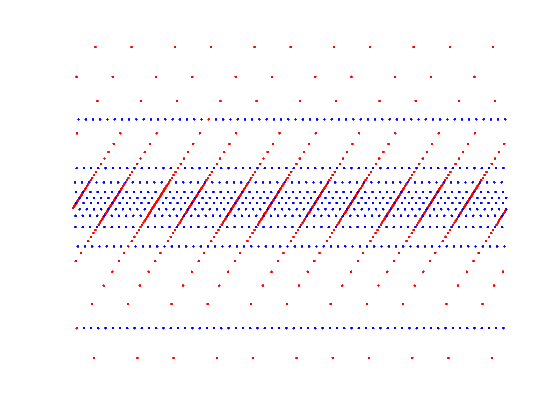} \\
	    $\kappa=-0.7$ & $\kappa = -0.3$ & $\kappa = 0$ & $\kappa=0.3$ & $\kappa = 0.7$
	 \end{tabular}
	\caption{The image of an equispaced Cartesian grid $(s,\alpha)\in [0,L]\times [-\pi/2,\pi/2]$ viewed in $(w,\t)\in [0,L]\times [0,L/2]$ (iso-$s$ in red, iso-$\alpha$ in blue).}
	\label{fig:FB2WP}
    \end{subfigure}
    \begin{subfigure}[b]{\textwidth}
	\centering
	\begin{tabular}{ccccc}
	    \includegraphics[trim = 30 30 30 30, clip, width=0.16\textwidth]{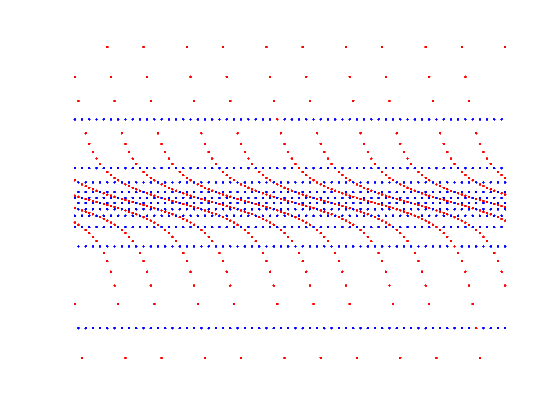} &
	    \includegraphics[trim = 30 30 30 30, clip, width=0.16\textwidth]{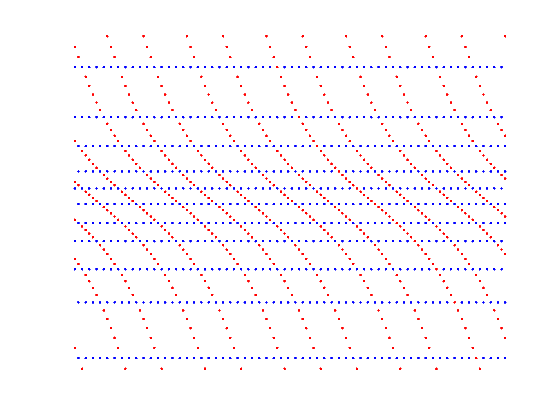} &
	    \includegraphics[trim = 30 30 30 30, clip, width=0.16\textwidth]{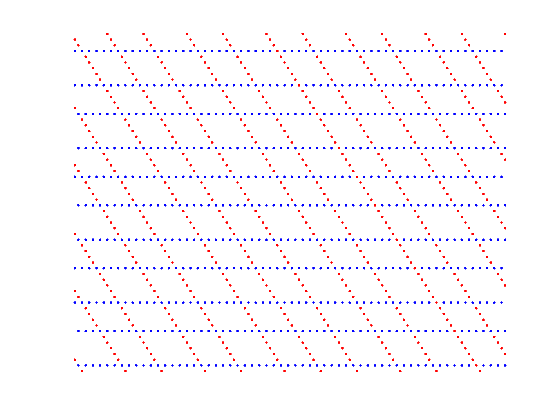} &
	    \includegraphics[trim = 30 30 30 30, clip, width=0.16\textwidth]{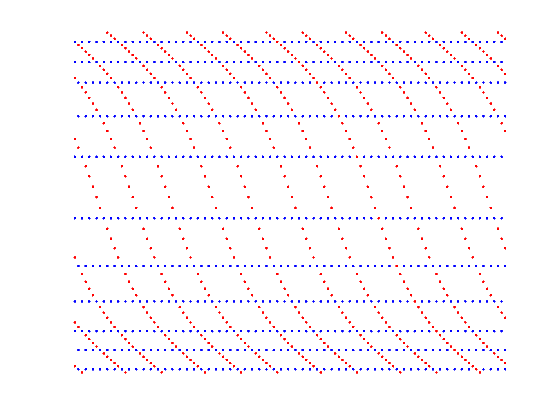} &
	    \includegraphics[trim = 30 30 30 30, clip, width=0.16\textwidth]{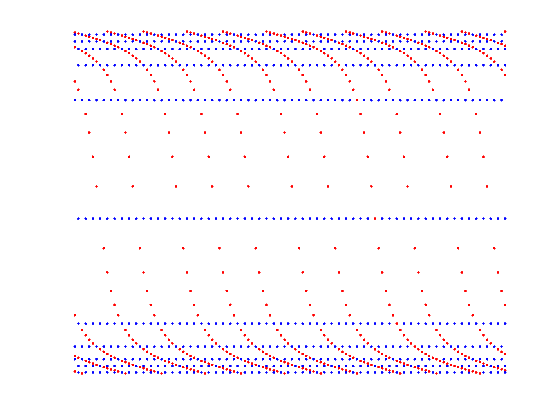} \\	
	    $\kappa=-0.7$ & $\kappa = -0.3$ & $\kappa = 0$ & $\kappa=0.3$ & $\kappa = 0.7$
	 \end{tabular}
	\caption{the image of an equispaced Cartesian grid $(w,\t)\in [0,L]\times [0,L/2]$ viewed in $(s,\alpha)\in [0,L]\times [-\pi/2,\pi/2]$ (iso-$w$ in red, iso-$\t$ in blue).}
	\label{fig:WP2FB}
    \end{subfigure}
    \caption{Example 1. Changes of coordinates $(s,\alpha)\leftrightarrow (w,\t)$.}
    \label{fig:coords}
\end{figure}

\smallskip
\noindent{\bf Example 2: $I_0 f$ for various values of $\kappa$ in both coordinate systems.} We visualize the X-ray transform of the function $f_0$ displayed in \cref{fig:f0} for various values of $\kappa$ in \cref{fig:If0}. The function $f_0$ is a sum of Gaussians of width $\sigma = 0.03$, hence $\hat f_0$ is a sum of centered, modulated Gaussians of width $\frac{1}{\sigma}$. Hence we declare the (essential) bandwidth of $f_0$ to be $B = \frac{3}{\sigma} = 100$. 

\begin{figure}[htpb]
    \centering
    \begin{subfigure}[b]{\textwidth}
         \centering
	 \begin{tabular}{ccccc}
	     \includegraphics[width=0.16\textwidth]{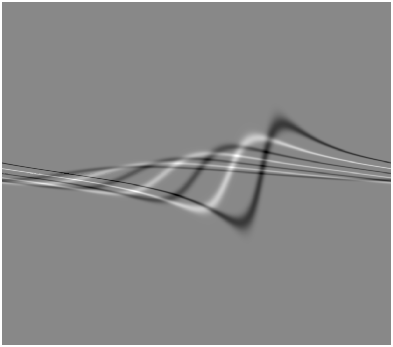} &
	     \includegraphics[width=0.16\textwidth]{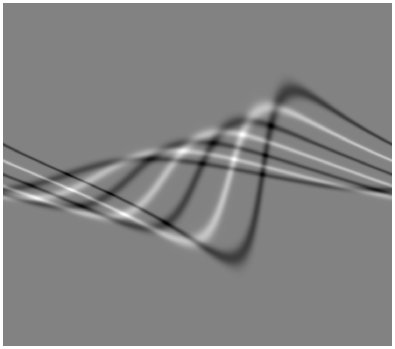} &
	     \includegraphics[width=0.16\textwidth]{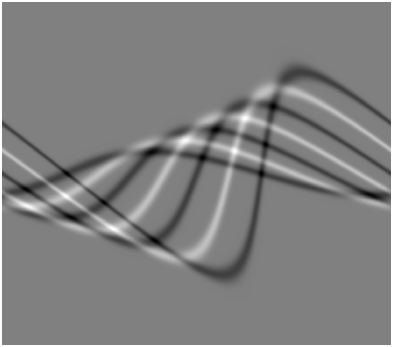} &
	     \includegraphics[width=0.16\textwidth]{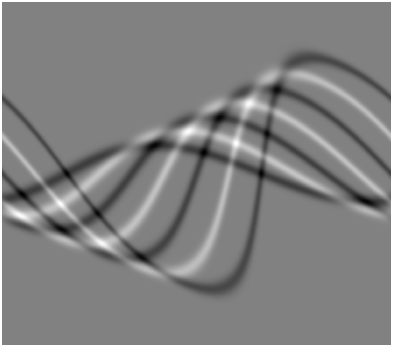} &
	     \includegraphics[width=0.16\textwidth]{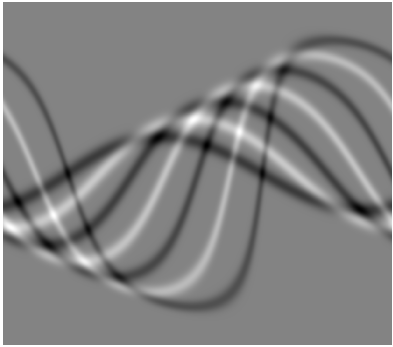} \\
	     $\kappa=-0.7$ & $\kappa = -0.3$ & $\kappa = 0$ & $\kappa=0.3$ & $\kappa = 0.7$
	 \end{tabular}
	 \caption{Cartesian $(s,\alpha)$ coordinates}
         \label{fig:If0_FB}
    \end{subfigure}
    \begin{subfigure}[b]{\textwidth}
	\centering
	\begin{tabular}{ccccc}
	    \includegraphics[width=0.16\textwidth]{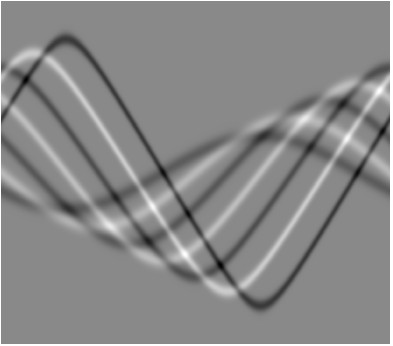} &
	    \includegraphics[width=0.16\textwidth]{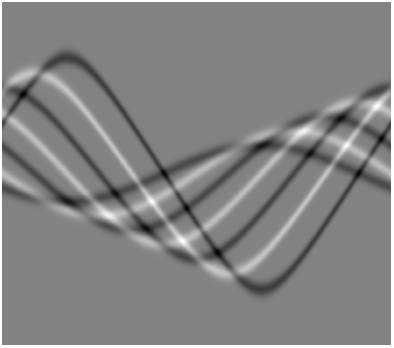} &
	    \includegraphics[width=0.16\textwidth]{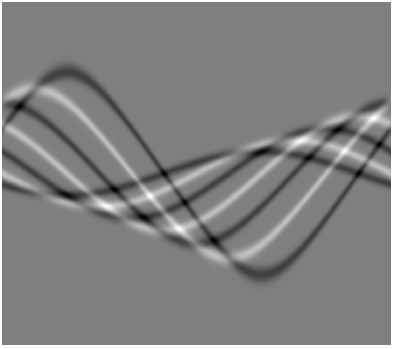} &
	    \includegraphics[width=0.16\textwidth]{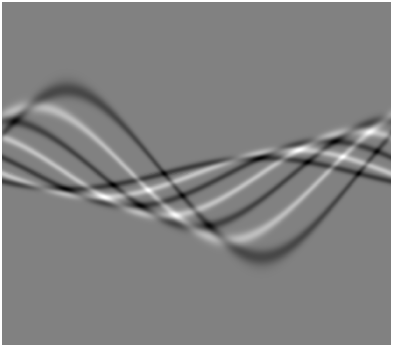} & 
	    \includegraphics[width=0.16\textwidth]{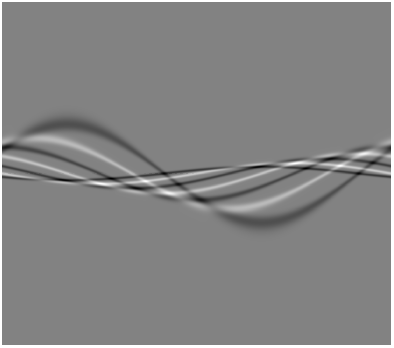} \\
	     $\kappa=-0.7$ & $\kappa = -0.3$ & $\kappa = 0$ & $\kappa=0.3$ & $\kappa = 0.7$
	 \end{tabular}
	\caption{Cartesian $(w,\t)$ coordinates}
	\label{fig:If0_WP}
    \end{subfigure}    
    \caption{Example 2. The X-ray transforms of the function displayed in \cref{fig:f0}.}
    \label{fig:If0}
\end{figure}

\subsection{Canonical relation and semiclassical wavefront sets}

In the experiments below, we will often use sums of coherent states of the form 
\begin{align}
    f_h(x; x_0, \xi_0) = \sin (x\cdot\xi_0 /h) e^{-|x-x_0|^2/2h}, \qquad \WFH (f) = \{(x_0, \pm \xi_0)\}.
    \label{eq:coherentstate}
\end{align}
For $0<h\ll 1$ fixed, the amplitude of the Fourier transform of $f_h$ is made of two gaussians centered at $\pm \xi_0/h$, each of width $\frac{1}{\sqrt{h}}$. We use those as proxies for functions which are localized in both space and frequency, and to illustrate integrands with simple frequency content. 

From Examples 3 and 4 below, we see that for $h$ small enough, a wave packet $f$ of the form \cref{eq:coherentstate} gives rise to two (distorted) wave packets on $I_0 f$, and using windowed Fourier transforms to extract the frequency content of each packet in data space shows good agreement with the predictions of \cref{eq:WFhmapI0}: the amplitude of the windowed Fourier transform of $I_0 f$ is made of spots centered at the values predicted by computing images of the original covector $\omega = (x_0, \xi_0/h)$ under the canonical graphs $C_\pm$ of $I_0$. 

In both Examples 3 and 4 below, resolution on $I_0 f$ is high enough that there is no aliasing in data space. 

\smallskip
\noindent{\bf Example 3:} In this example, $R=1$, $\kappa = 0.3$. We pick a function as the sum of three coherent states of the form \cref{eq:coherentstate}
\begin{align}
    f_1 = \sum_{k=1}^3 f_h (x; \gamma_{s_k,\alpha_k}(u_k \tau(\alpha_k)), e^{i\theta_k}), \qquad h = .01, 
    \label{eq:f1}
\end{align}
where $\theta_k$ is $\pi/2$-rotation of the direction of $\dot\gamma_{s_k,\alpha_k}(u_k \tau(\alpha_k))$, and where 
\begin{align*}
    s_1 = 5L/6,\;\;\; s_2=s_3 = L/2,\;\;\; \alpha_1 = 0,\;\;\; \alpha_2 = \alpha_3 = \pi/4,\;\;\; u_1 = u_2 = .3,\;\;\; u_3 = .6. 
\end{align*}
\Cref{fig:Ex1} displays the function $f_1$ from \cref{eq:f1} (top) and the modulus of its Fourier Transform (bottom). The function $f_1$ satisfies a band limit condition of the form \cref{eq:band limit2} with $\Sigma = B_e^* M$ \rev{(where the subscript '$e$' refers to Euclidean balls)}.

We explain the content of \cref{fig:Ex1_data}: In the left of \cref{fig:Ex1_dataFB}, we visualize $I_0 f$ in $(s,\alpha)$ coordinates, with $s\in [0,L]$ on the horizontal axis and $\alpha\in [-\pi/2,\pi/2]$ on the vertical axis. Each wave packet in $f$ gives rise to a distorted wave packed in $I_0 f$, at the points $(s_k,\alpha_k)_{k=1}^3$ (with blue bowties superimposed) and their images w.r.t. the antipodal scattering relation (with green bowties superimposed). At each $(s_k,\alpha_k)$, we draw a rescaled version of the boundary of the bowtie $[C_+(B_e^* M)]_{(s_k,\alpha_k)}$, which\rev{, in spite of living in the cotangent fiber $T^*_{(s_k,\alpha_k)}(\partial_+ SM)$, helps predict} the possible directions of oscillation at that point in data space, corresponding to oscillations of $f$ at the same, fixed frequency. The small dots on the bowtie correspond to the predicted direction of oscillations. Two of the wave packets look like checkerboard because there is interaction between the two wave packets on $f$ whose oscillations are conormal to the same geodesic. In the right of \cref{fig:Ex1_dataFB}, for each wave packet, we visualize the amplitude of the Fourier transform of $I_0 f$ cutoff around each $(s_k,\alpha_k)$ with a gaussian of width $\sigma = 0.2$ (the red circles are circles of radius $\sigma$), and superimpose the set $[C_+(B\cdot S_e^* M)]_{(s_k,\alpha_k)}$, together with red dots computed explicitly using the canonical graphs \cref{eq:Cpm}. \Cref{fig:Ex1_dataWP} repeats \cref{fig:Ex1_dataFB}, this time in $(w,\t)$ coordinates ($w\in [0,L]$ on the horizontal axis, and $\t\in [0,L/2]$ on the vertical axis).

\begin{figure}[htpb]
    \centering
    \begin{subfigure}[b]{\textwidth}
	\centering
	\includegraphics[height=0.21\textheight]{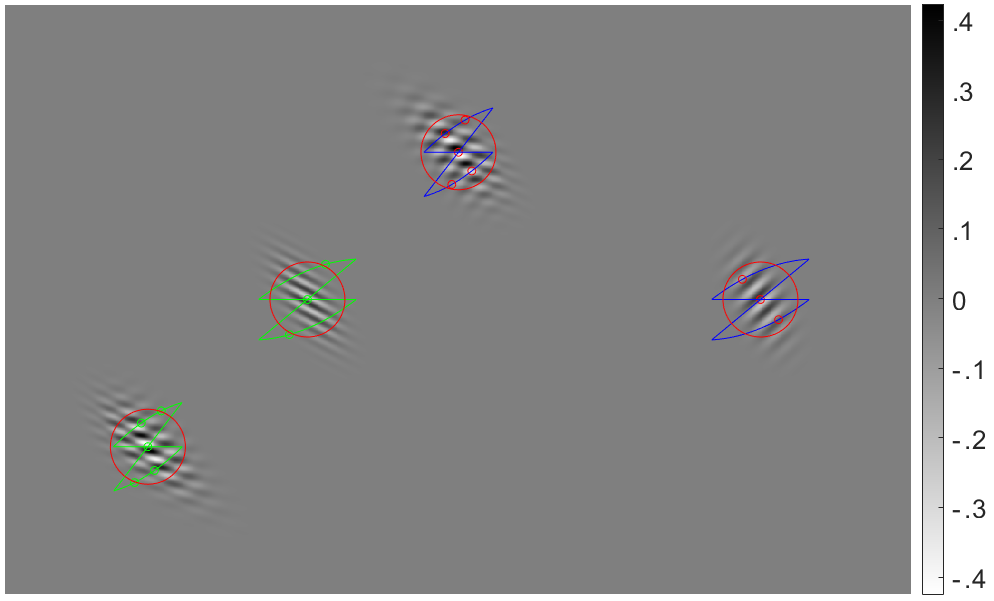} 
	\includegraphics[height=0.21\textheight]{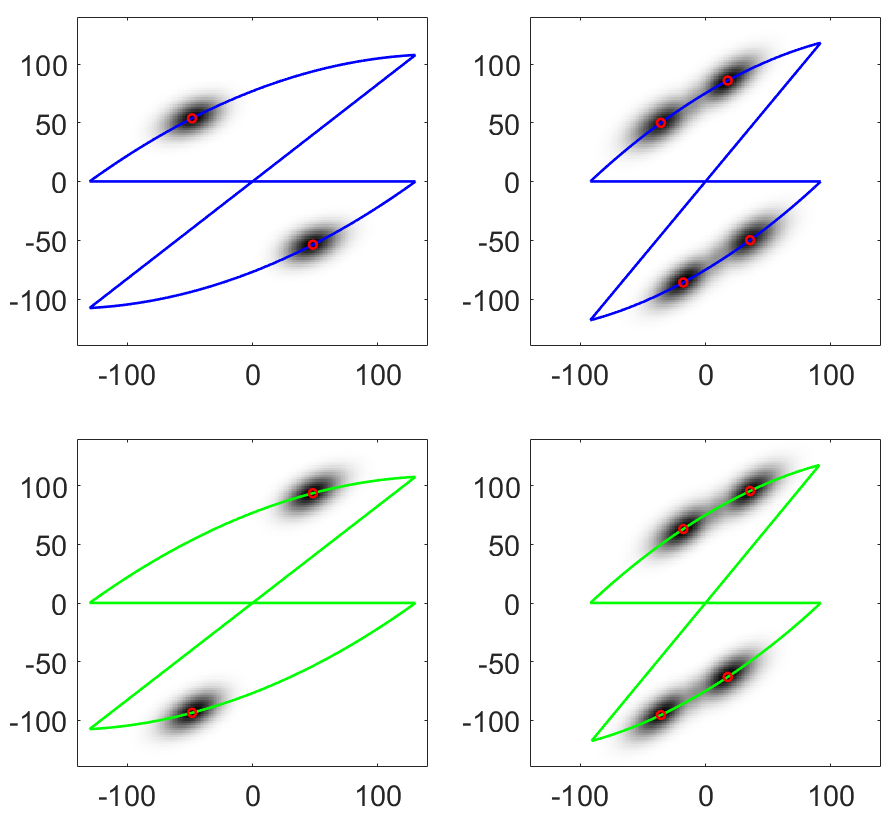}
	\caption{$(s,\alpha)$ coordinates}
	\label{fig:Ex1_dataFB}
    \end{subfigure}
    \begin{subfigure}[b]{\textwidth}
	\centering
	\includegraphics[height=0.19\textheight]{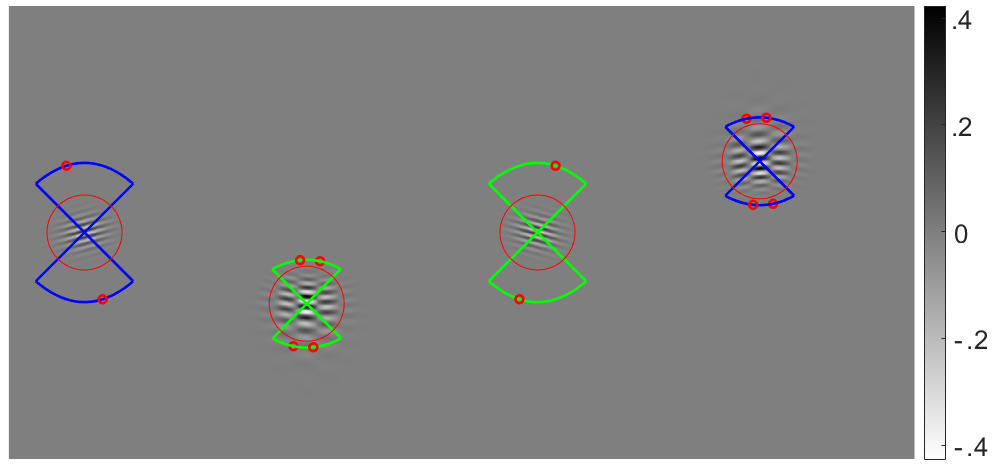} 
	\includegraphics[height=0.19\textheight]{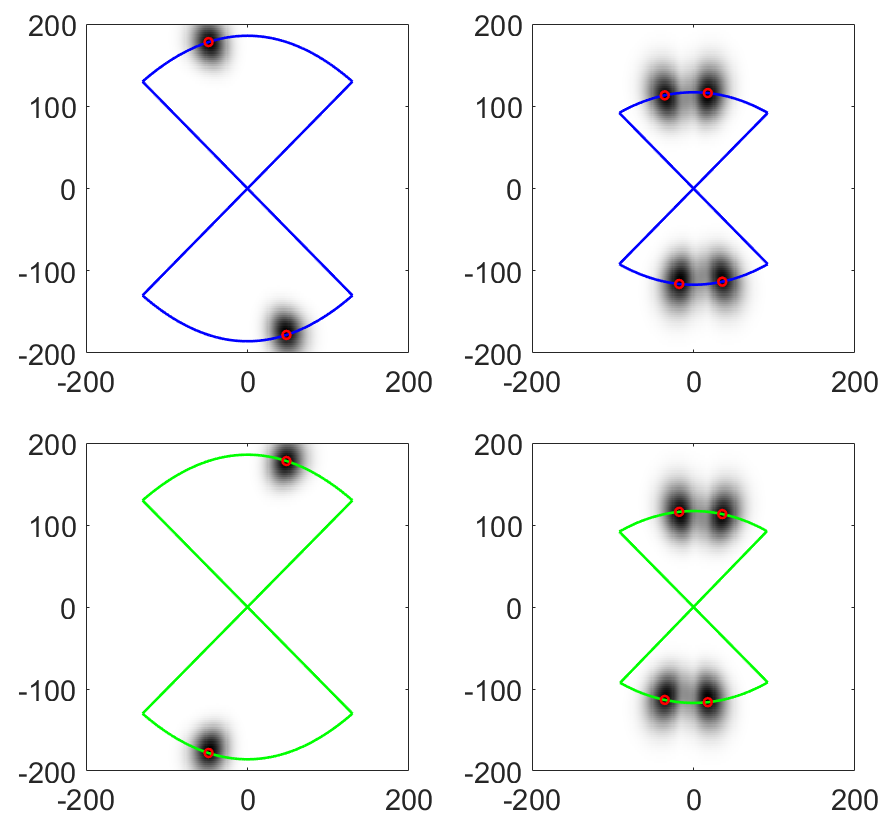}
	\caption{$(w,\t)$ coordinates}
	\label{fig:Ex1_dataWP}
    \end{subfigure}
    \caption{Example 3 $(\kappa=0.3)$ - Left: data $I_0 f$ where $f$ is visualized \cref{fig:Ex1}. Right: bowties (see explanation in the text)}
    \label{fig:Ex1_data}
\end{figure}

\smallskip
\noindent{\bf Example 4:} In this example, we reproduce the same as Example 3, except in curvature $\kappa = -0.3$. All other parameters are the same, the function is displayed \cref{fig:Ex2} and the outcomes are displayed \rev{in} \cref{fig:Ex2_data}.

\begin{figure}[htpb]
    \centering
    \begin{subfigure}[b]{\textwidth}
	\centering
	\includegraphics[height=0.15\textheight]{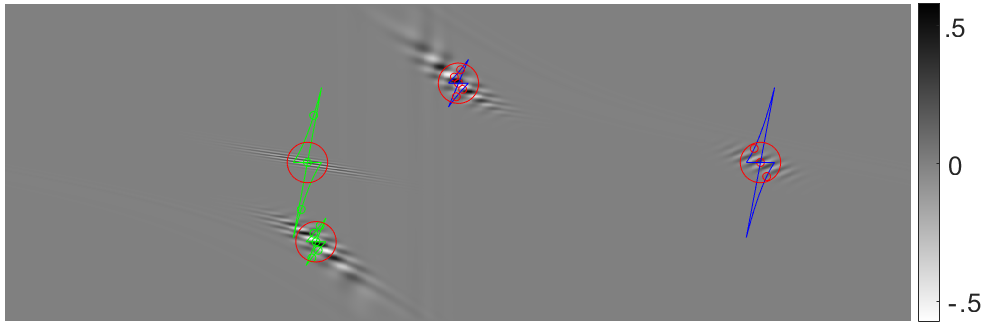} 
	\includegraphics[height=0.15\textheight]{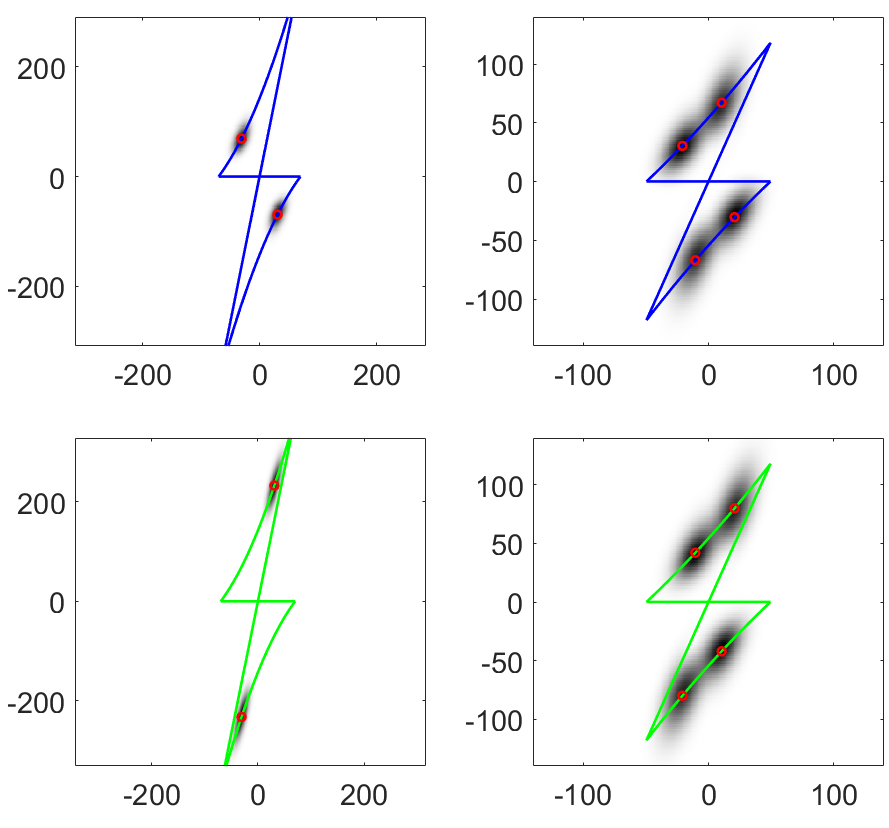}
	\caption{$(s,\alpha)$ coordinates}
	\label{fig:Ex2_dataFB}
    \end{subfigure}
    \begin{subfigure}[b]{\textwidth}
	\centering
	\includegraphics[height=0.19\textheight]{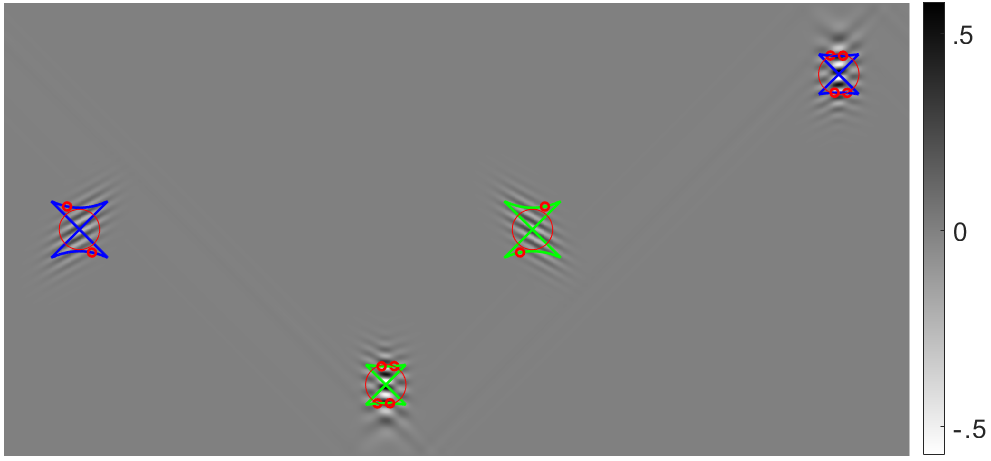} 
	\includegraphics[height=0.19\textheight]{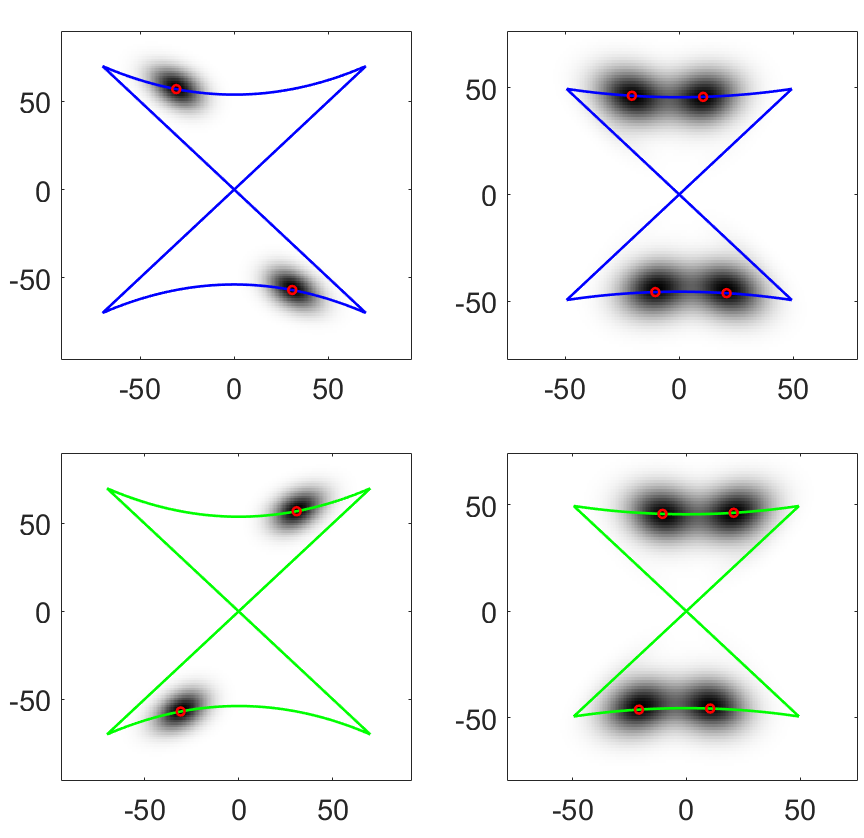}
	\caption{$(w,\t)$ coordinates}
	\label{fig:Ex2_dataWP}
    \end{subfigure}
    \caption{Example 4: Same as Example 3 with $\kappa=-0.3$. Left: data $I_0 f$ where $f$ is visualized \cref{fig:Ex2}. Right: bowties; frequency axes are rescaled to be discretization-independent (they are the sizes of the dual variable components for the continuous Fourier transform)}
    \label{fig:Ex2_data}
\end{figure}

\subsection{Bounding boxes and sufficient criteria for non-aliased reconstructions} \label{sec:boxes}

In this section we attempt to answer the following: given an essentially band limited function $f$,
\begin{description}
    \item[Q1] How finely do we have to sample $I_0 f$ to avoid aliasing on $I_0 f$?
    \item[Q2] How to reconstruct $f$ from unaliased data? (how to avoid aliasing in the inversion process) 
\end{description}

Assuming a band limit condition on $f$ of the form \cref{eq:band limit2} with $B$ and $\Sigma$ fixed, the answer to Q1 depends on how we decide to sample $I_0 f$. Our strategy will be to fix a coordinate system to represent $I_0 f$ (either $(s,\alpha)$ or $(w,\t)$), and perform equispaced Cartesian sampling. \rev{Note that in fan-beam coordinates, this corresponds to uniform sampling with respect to the 'natural' geometric measure $d\Sigma^2$, i.e. the area form coming from the induced Sasaki metric on $\partial_+ SM$.} The question will then be to estimate the number of points in each coordinate in terms of the band limit of $f$. This is done as follows:

$\bullet$ Choosing fan-beam coordinates $(s,\alpha)$ on $\partial_+ SM$, and looking to construct a Cartesian Nyquist box in those coordinates, one must compute the quantities
\begin{align*}
	    b_s(\Sigma) = \max_{(x,v)\in \partial_+ SM} \left|[C_+(\Sigma)]_{(x,v)} (\partial_s)\right|, \quad b_\alpha (\Sigma) = \max_{(x,v)\in \partial_+ SM} \left|[C_+(\Sigma)]_{(x,v)} (\partial_\alpha)\right|,
\end{align*}
defined generally in \cref{eq:bnumbers}. These numbers measure the largest possible frequency extent in data space of a \rev{$\Sigma$}-band limited function, along the dual axes to the chosen coordinate system. In parallel coordinates $(w,\t)$, one computes instead the similarly defined quantities $b_w(\Sigma)$ and $b_\t (\Sigma)$. 

$\bullet$ Finally, the Nyquist criterion in fan-beam coordinates will require choosing stepsizes $h_s\le \frac{\pi}{B b_s}$ and $h_\alpha\le \frac{\pi}{B b_\alpha}$, leading to at least $N_s = \frac{L}{h_s} = B \frac{L b_s}{\pi}$ points along the $s$ axis and $N_\alpha = \frac{\pi}{h_\alpha} = B b_\alpha$ along the $\alpha$ axis.

Similarly, in parallel coordinates, stepsize requirements look like $h_w \le \frac{\pi}{B b_w}$ and $h_\t \le \frac{\pi}{B b_\t}$, leading to at least $N_w = \frac{L}{h_w} = B \frac{L b_w}{\pi}$ points along the $w$ axis, and at least $N_\t = B \frac{L b_\t}{2\pi}$ points along the $\t$ axis.

The quantities $b_\bullet (\Sigma)$ can be computed by running geodesics and including the values of $\eta_{x,v,t}$ for which $\varphi_{x,v}(t)$ belongs to the support of $f$. More precisely, if there exists a function $m\colon M \to \Rm_+$ 
such that $\Sigma_x = \{ \omega\in T^*_x M: \frac{m(x)}{c_\kappa(x)} |\omega|_e^2 \le 1 \}$ for all $x\in M$, then 
\begin{align}
    [C_+(\Sigma)]_{(x,v)} = \{ \lambda m(\gamma_{x,v}(t)) \eta_{x,v,t}: \lambda\in [-1,1],\ t\in (0,\tau(x,v)) \}, \qquad (x,v)\in \partial_+ SM.
    \label{eq:bowties}
\end{align}
The latter expression makes \rev{it} especially simple to numerically compute these sets for various functions $m$, and we now give three examples. 

In all cases considered below, the geometry and the $\Sigma$ constraint are rotation invariant, and thus the shape of $[C_+(\Sigma)]_{(x,v)}$ does not depend on $s$ in fan-beam coordinates, or does not depend on $w$ in parallel coordinates. In \cref{fig:bowties,fig:bowtiesE2,fig:bowtiesE3} below, for each coordinate system and different value of $\kappa$ (keeping $R=1$), we plot several representative sets \cref{eq:bowties} for a few values of $\alpha$, out of which we can extract the numbers $b_\bullet (\Sigma)$ for $\bullet \in \{s,\alpha,w,\t\}$. We tabulate the resulting number-of-sample factors $N_{(s,\alpha)} (\Sigma) = \frac{L}{\pi} b_s(\Sigma) b_\alpha(\Sigma)$ and $N_{(w,\t)} (\Sigma) = \frac{L^2}{2\pi^2} b_w(\Sigma) b_\t(\Sigma)$ in \cref{tab:boundingboxes}. We consider the following cases: 
\begin{align}
    \begin{split}
	\Sigma_1 = B_e^* M, &\qquad m = c_\kappa \\
	\Sigma_2 = B^* M, &\qquad m \equiv 1 \\
	[\Sigma_3]_x = \left\{
	\begin{array}{cc}
	    [B_e^* M]_x, & \text{if } |x|<0.75, \\
	    \{0\}, & \text{otherwise},
	\end{array}
	\right. & \qquad m(x) = 1_{\{|x|<0.75\}} c_\kappa(x).
    \end{split}
    \label{eq:Sigmas}
\end{align}

In the case $\Sigma = \Sigma_3$, short geodesics (for which $|\alpha|$ is close to $\pi/2$ or $|\t-L/4|$ is close to $L/4$) do not intersect the support of $f$ and as such should not be sampled. In particular, $\alpha \in (-\alpha_{m},\alpha_m)$, where $\alpha_m\in [0,\pi/2]$ is the largest value for which the geodesic $\gamma_{0,\alpha}$ intersects the set $\{|x|\le 0.75\}$ (that such an $\alpha_m$ exists comes from the fact that centered disks are convex for the geodesic flow in all geometries considered here). Via \cref{eq:CCD_FB2WT}, $\alpha_m$ gives rise to $\t_m\in [L/4,L/2]$, and the range of $\t$'s where data should be sampled becomes $[L/2-\t_m,\t_m]$. As a result, the number-of-sample factors become, in this case
\begin{align*}
    N_{(s,\alpha)}(\Sigma_3) = \frac{L b_s}{\pi} \cdot \frac{2\alpha_m b_\alpha}{\pi}, \qquad N_{(w,\t)}(\Sigma_3) = \frac{L b_w}{\pi} \cdot \frac{(2\t_m-L/2)b_\t}{\pi}.
\end{align*}
Note that $\alpha_m$ and $\t_m$ depend on $\kappa$, $R$ and the size of the support. 

\begin{figure}[htpb]
    \centering
    \begin{tabular}{cccccc}
	\includegraphics[height=0.08\textheight]{./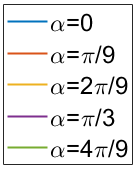} &
	\includegraphics[trim = 30 0 30 20, clip , height=0.07\textheight]{./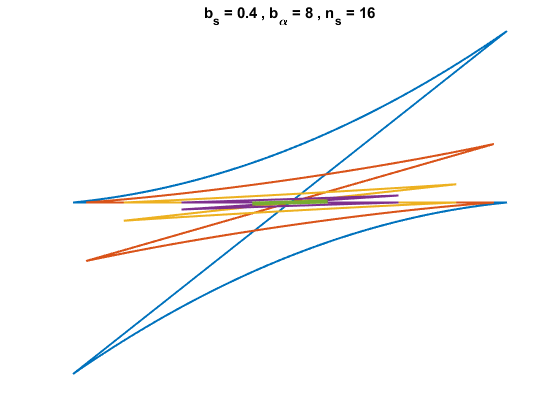} & 
	\includegraphics[trim = 30 0 30 20, clip , height=0.07\textheight]{./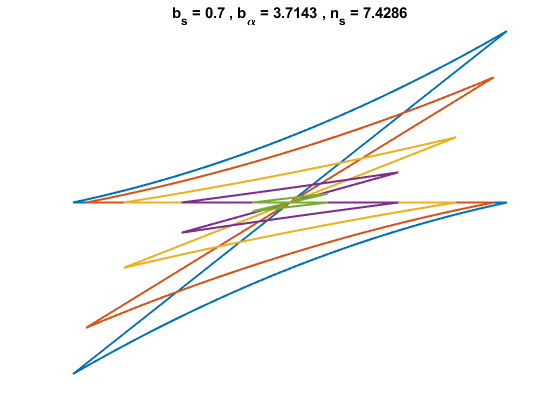}  & 
	\includegraphics[trim = 30 0 30 20, clip , height=0.07\textheight]{./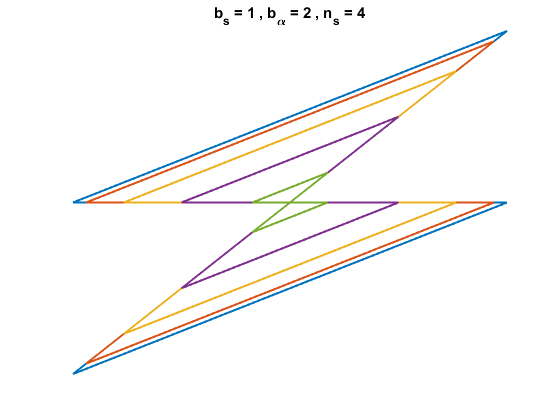} &
	\includegraphics[trim = 30 0 30 20, clip , height=0.07\textheight]{./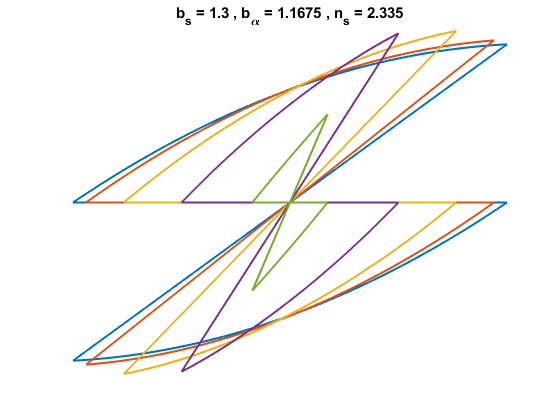} &
	\includegraphics[trim = 30 0 30 20, clip , height=0.07\textheight]{./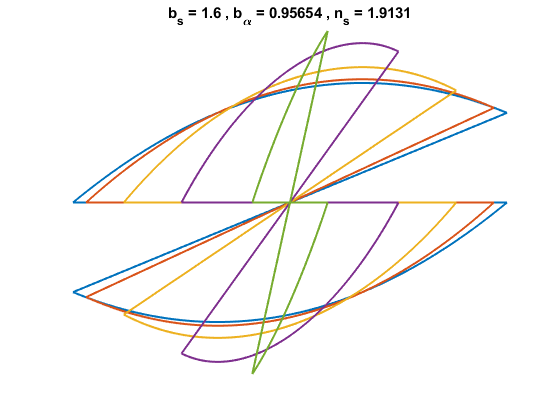} \\
	& $(0.4,8)$ & $(0.7,3.71)$ & $(1,2)$ & $(1.3,1.17)$ & $(1.6,0.96)$ \\[1mm]
	& \includegraphics[trim = 30 0 30 21, clip , height=0.07\textheight]{./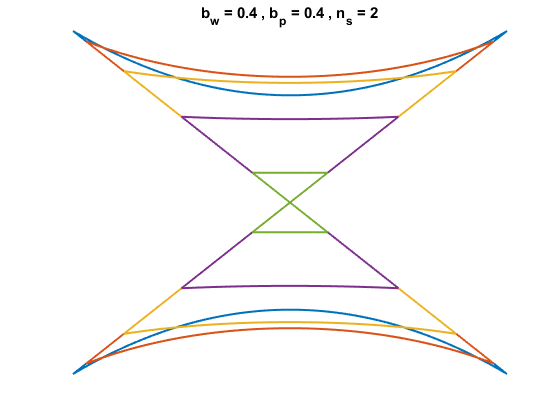} &
	\includegraphics[trim = 30 0 30 21, clip , height=0.07\textheight]{./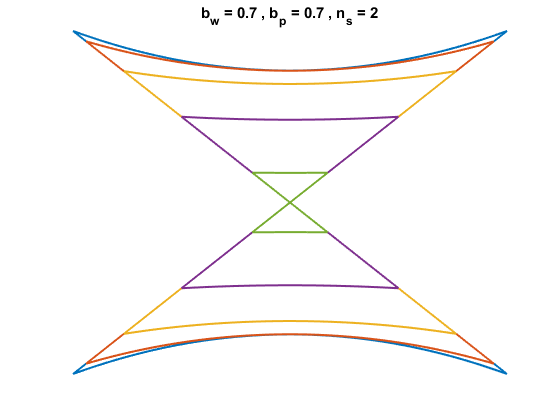}  &  
	\includegraphics[trim = 30 0 30 21, clip , height=0.07\textheight]{./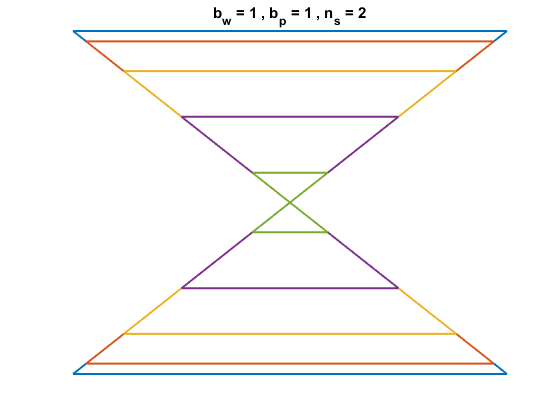} &
	\includegraphics[trim = 30 0 30 21, clip , height=0.07\textheight]{./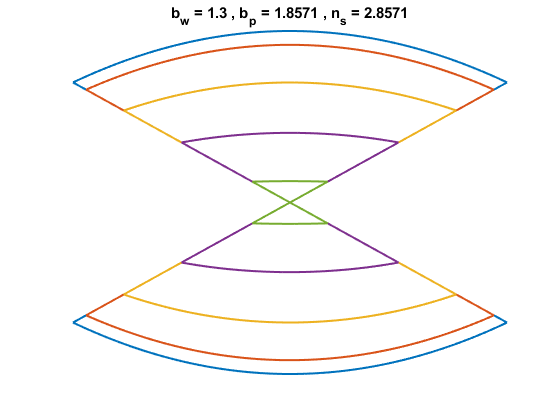} &
	\includegraphics[trim = 30 0 30 21, clip , height=0.07\textheight]{./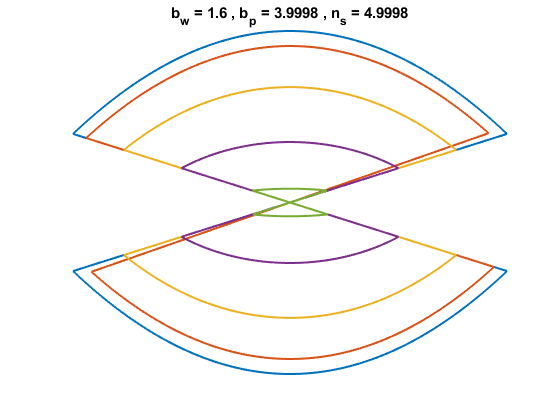} \\	
	& $(0.4,0.4)$ & $(0.7,0.7)$ & $(1,1)$ & $(1.3,1.86)$ & $(1.6,4)$
    \end{tabular}
    \caption{Case $\Sigma = \Sigma_1$. In each figure, we superimpose $[C_+ (\Sigma)]_{(\alpha)}$ for $\alpha\in \{j\pi/9, 0\le j\le 4\}$. Left to right: $\kappa = \{-0.6,-0.3,0,0.3,0.6\}$. Top row: fan-beam coordinates, bowties are rescaled in a box $[-b_s,b_s]\times [-b_\alpha,b_\alpha]$ with $(b_s,b_\alpha)$ given below each plot. Bottom row: parallel coordinates, bowties are rescaled in a box $[-b_w,b_w]\times [-b_\t,b_\t]$ with $(b_w,b_\t)$ given below each plot.}
    \label{fig:bowties}
\end{figure}

\begin{figure}[htpb]
    \centering
    \begin{tabular}{cccccc}
	\includegraphics[height=0.08\textheight]{./figs/alphas.png} &	
	\includegraphics[trim = 30 0 30 20, clip , height=0.07\textheight]{./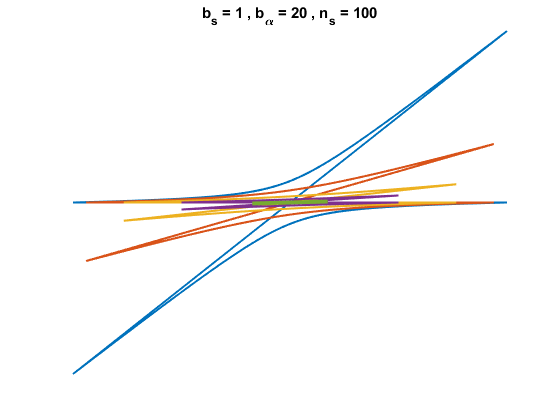} & 
	\includegraphics[trim = 30 0 30 20, clip , height=0.07\textheight]{./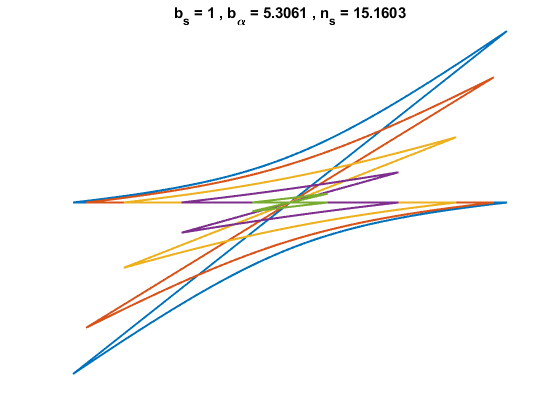}  & 
	\includegraphics[trim = 30 0 30 20, clip , height=0.07\textheight]{./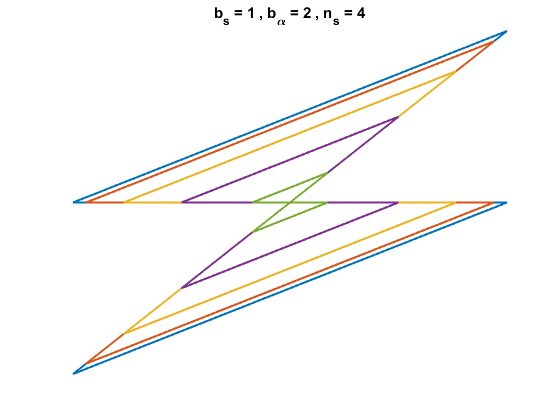} &
	\includegraphics[trim = 30 0 30 20, clip , height=0.07\textheight]{./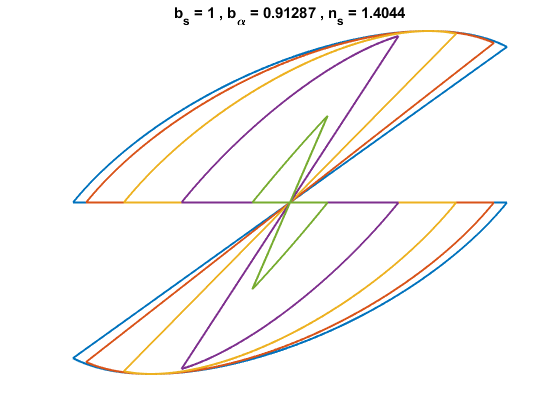} &
	\includegraphics[trim = 30 0 30 20, clip , height=0.07\textheight]{./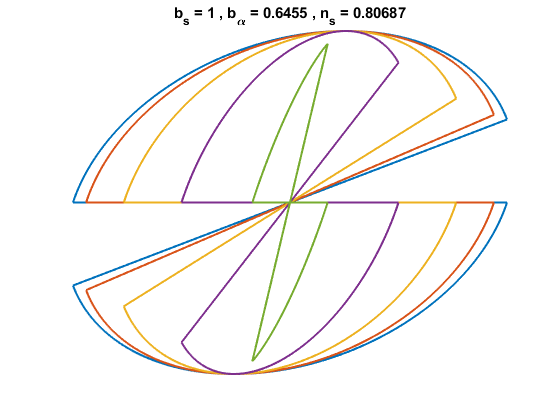} \\
	& $(1,20)$ & $(1,5.31)$ & $(1,2)$ & $(1,0.91)$ & $(1,0.65)$ \\[1mm]
	& \includegraphics[trim = 30 0 30 21, clip , height=0.07\textheight]{./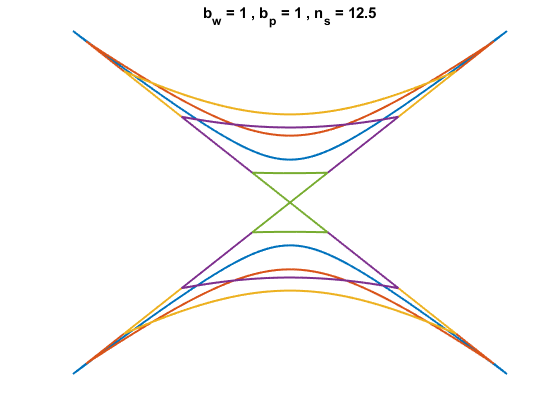} &
	\includegraphics[trim = 30 0 30 21, clip , height=0.07\textheight]{./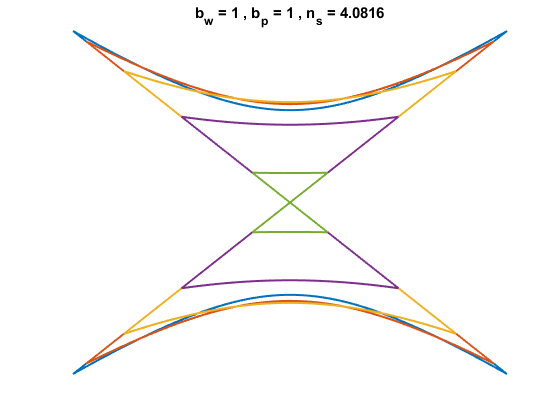}  &  
	\includegraphics[trim = 30 0 30 21, clip , height=0.07\textheight]{./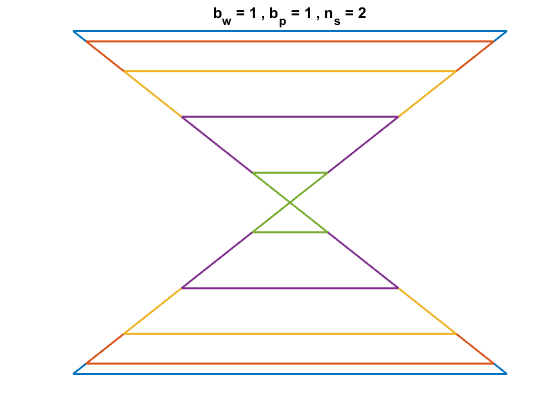} &
	\includegraphics[trim = 30 0 30 21, clip , height=0.07\textheight]{./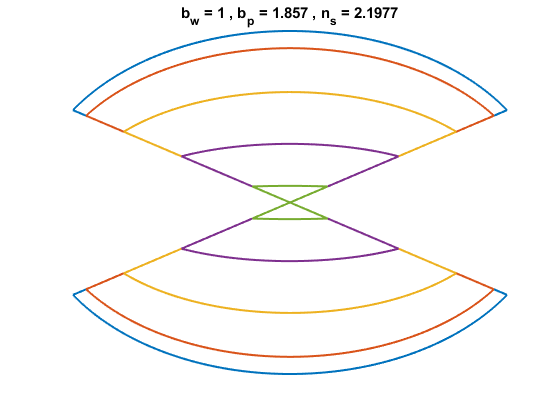} &
	\includegraphics[trim = 30 0 30 21, clip , height=0.07\textheight]{./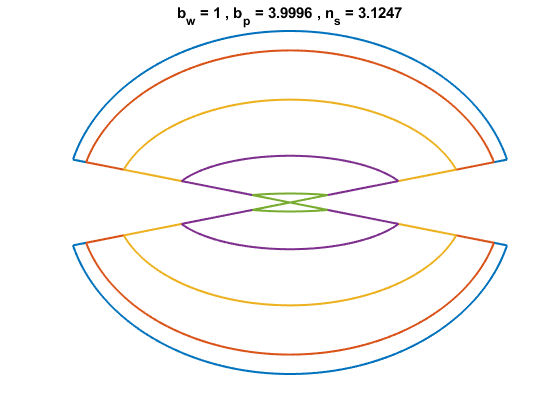} \\	
	& $(1,1)$ & $(1,1)$ & $(1,1)$ & $(1,1.86)$ & $(1,4.0)$ 	
    \end{tabular}
    \caption{Same as \cref{fig:bowties} with $\Sigma = \Sigma_2$.}
    \label{fig:bowtiesE2}
\end{figure}

\begin{figure}[htpb]
    \centering
    \begin{tabular}{cccccc}
	\includegraphics[height=0.08\textheight]{./figs/alphas.png} &	
	\includegraphics[trim = 30 0 30 20, clip , height=0.07\textheight]{./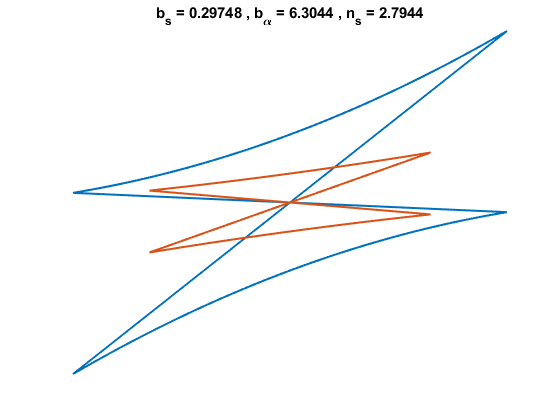} & 
	\includegraphics[trim = 30 0 30 20, clip , height=0.07\textheight]{./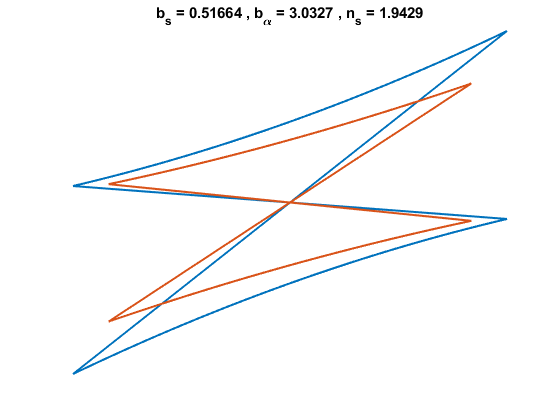}  & 
	\includegraphics[trim = 30 0 30 20, clip , height=0.07\textheight]{./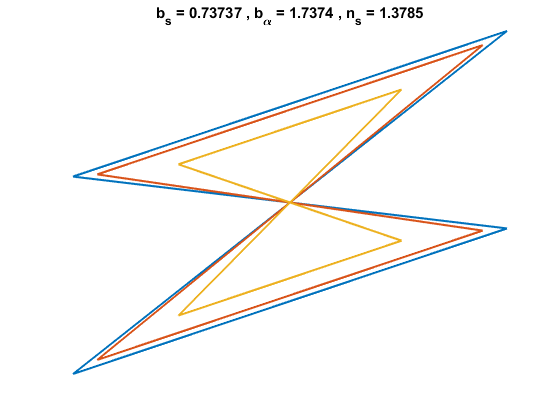} &
	\includegraphics[trim = 30 0 30 20, clip , height=0.07\textheight]{./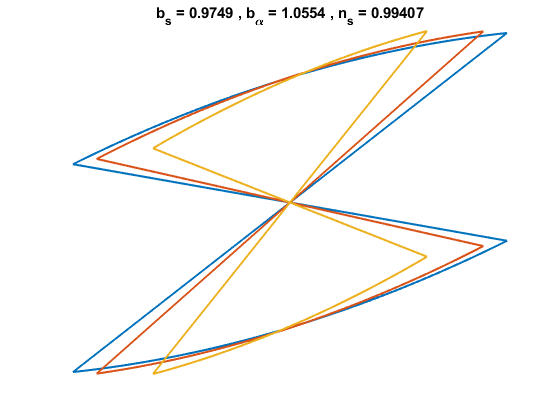} &
	\includegraphics[trim = 30 0 30 20, clip , height=0.07\textheight]{./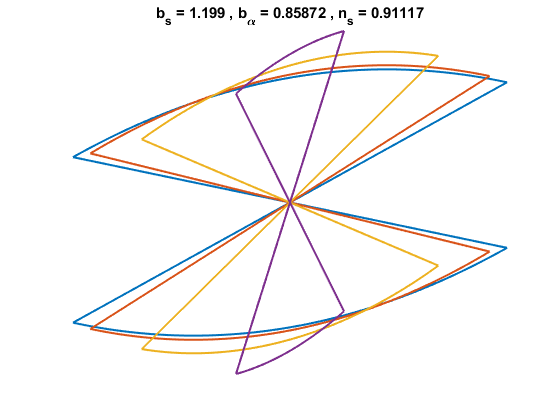} \\
	& $(0.3,6.3)$ & $(0.52,3.03)$ & $(0.74,1.74)$ & $(0.97,1.06)$ & $(1.2,0.86)$ \\[1mm]
	& \includegraphics[trim = 30 0 30 21, clip , height=0.07\textheight]{./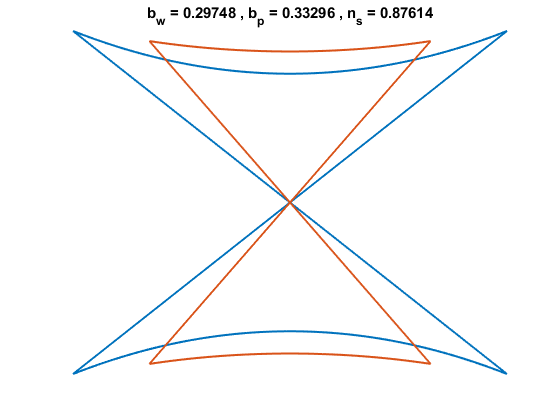} &
	\includegraphics[trim = 30 0 30 21, clip , height=0.07\textheight]{./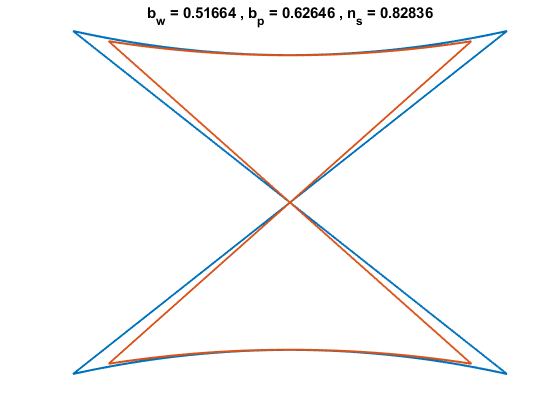}  &  
	\includegraphics[trim = 30 0 30 21, clip , height=0.07\textheight]{./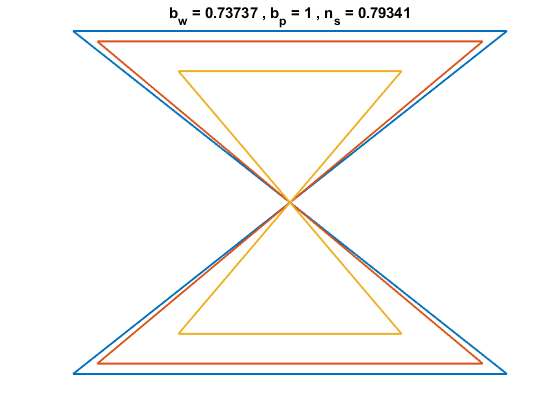} &
	\includegraphics[trim = 30 0 30 21, clip , height=0.07\textheight]{./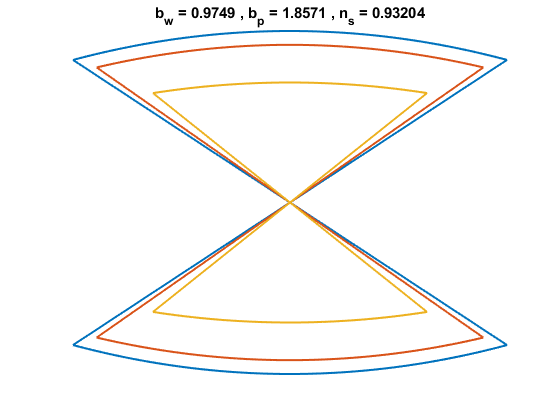} &
	\includegraphics[trim = 30 0 30 21, clip , height=0.07\textheight]{./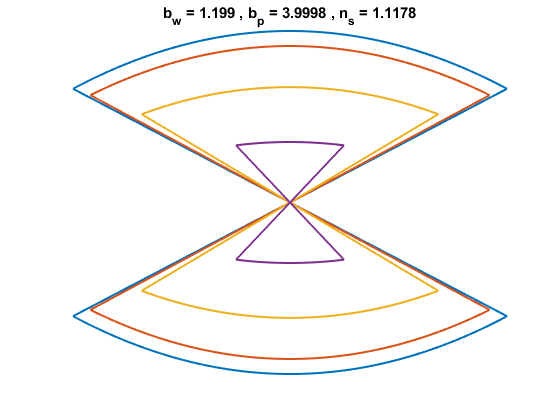} \\	
	& $(0.3,0.33)$ & $(0.52,0.63)$ & $(0.74,1)$ & $(0.97,1.86)$ & $(1.2,4)$ 	
    \end{tabular}
    \caption{Same as \cref{fig:bowties} with $\Sigma = \Sigma_3$ (support constraint).}
    \label{fig:bowtiesE3}
\end{figure}

\begin{table}
    \centering
    \begin{tabular}{|c||c|c|c|c|c|}
	\hline
	$\kappa$ & $-0.6$ & $-0.3$ & $0$ & $0.3$ & $0.6$ \\
	\hline
	$N_{(s,\alpha)}(\Sigma_1)$ & $16$ & $7.43$ & $4$ & $2.34$ & $1.91$ \\
	$N_{(w,\t)} (\Sigma_1) $ & $2$ & $2$ & $2$ & $2.86$ & $5$ \\	
	\hline\hline
	$N_{(s,\alpha)}(\Sigma_2)$ & $100$ & $15.2$ & $4$ & $1.4$ & $0.81$ \\
	$N_{(w,\t)}(\Sigma_2)$ & $12.5$ & $4.1$ & $2$ & $2.20$ & $3.12$ \\	
	\hline\hline
	$N_{(s,\alpha)}(\Sigma_3)$ & 2.80 & 1.94 & 1.38 & 1 & 0.91 \\
	$N_{(w,\t)}(\Sigma_3)$ & 0.88 & 0.83 & 0.79 & 0.93 & 1.12 \\	
	\hline
    \end{tabular}
    \caption{Number-of-sample factors for fan-beam coordinates $N_{(s,\alpha)}$ and parallel coordinates $N_{(w,\t)}$, for the sets $\Sigma_{1,2,3}$ defined by eq.~\cref{eq:Sigmas}.}
    \label{tab:boundingboxes}
\end{table}

In light of \cref{fig:bowties} through \cref{fig:bowtiesE3} and \cref{tab:boundingboxes}, we can draw the following conclusions: 
\begin{itemize}
    \item In both coordinate systems, long geodesics mostly produce larger bowties, and as such, drive the global sampling rate. \rev{In fan-beam coordinates, this is because longer geodesics allow for larger values of the Jacobi functions $a,b$ appearing in \cref{eq:etasal}. In parallel coordinates, ratios of the form $b(t-\tau/2)/b(\tau/2)$ appearing in \cref{eq:etaCCD_WT} alleviate this effect, but the $\mu$ factor there induces a global shrinking of the bowtie as geodesics become more tangential to the boundary and hence, by convexity, shorter. } An exception to this is when $\kappa R^2\to 1$ and $\Sigma = \Sigma_1$ in fan-beam coordinates (see top-right plot of \cref{fig:bowties}). 
    \item Negative curvature geometries are better sampled in the parallel coordinate system than in fan-beam (since $N_{(s,\alpha)}>N_{(w,\t)}$). This defect was previously observed in \cite{Monard2013} and explained in \cref{sec:sensitivity}. Band limit constraints using $\Sigma_1$ rather than $\Sigma_2$ lead to smaller sampling rates. 
    \item Positive curvature geometries are better sampled in fan-beam geometry than parallel. Band limit constraints using $\Sigma_2$ rather than $\Sigma_1$ lead to smaller sampling rates. 
    \item In all cases, \cref{tab:boundingboxes} shows that incorporating support constraints in the sampling scheme can greatly reduce the sampling rate, as can be seen from the drop in number-of-sample factors from $\Sigma_1$ to $\Sigma_3$. 
    \item Parallel coordinates may be amenable to non-Cartesian sampling. In particular, in negative curvature, the fact that the bowties don't stick out at the top makes it more amenable to staggered tiling, thereby improving the sampling rate, in a similar fashion to \cite[Sec. III.3]{Natterer2001}. 
\end{itemize}

\smallskip
\noindent{\bf Constraints on the inversion method.} In the case of simple surfaces, an approximate (up to smoothing error) reconstruction formula was first given in \cite{Pestov2004}. In the case of simple surfaces of constant curvature, this reconstruction formula is exact and takes the following form (written in the notation of \cite[Eq. (10) with $W\equiv 0$]{Monard2015}): 
\begin{align}
    f = I_0^{-1} I_0 f \qquad \text{where} \qquad I_0^{-1} := \frac{1}{8\pi} I_\perp^* A_+^* H_- A_-,
    \label{eq:I0inv}
\end{align}
and where 
\begin{itemize}
    \item $A_-$ denotes extension from $\partial_+ SM$ to $\partial SM$ by oddness w.r.t. scattering relation,
    \item $H_-$ is the odd Hilbert transform on the circle fibers of $\partial SM$, 
    \item $A_+^* u(x,v) = u(x,v)+u(S(x,v))$ for $u\colon \partial SM\to \Cm$ and $(x,v)\in \partial_+ SM$,
    \item $I_\perp^* w$ means: extend $w$ from $\partial_+ SM$ to $SM$ as constant along geodesics, apply $X_\perp$ to the result, then average over the fibers of $SM$. 
\end{itemize}
All these building blocks are classical FIOs. As such, guaranteeing sharp rates for non-aliased inversions may require a more in-depth study of the canonical relation of each component. In particular, even if we can define $I_0^{-1}$ 
whose canonical relation encodes the inverse of that of $I_0$, it can be that the sampling requirements on each building block making $I_0^{-1}$ may require {\em higher} sampling rates than the ones predicted simply by looking the inverse of the canonical relation of $I_0$.  

As this issue can be overcome by upsampling the data $I_0 f$ before applying $I_0^{-1}$, and for the sake of brevity, we will assume that the inversion formula \cref{eq:I0inv} does not generate aliasing by choosing 'high enough' resolution when computing $I_0^{-1}$. 

\smallskip
\noindent{\bf Example 5. Minimum sampling rates.} As an example of how to use these rates, we display on \cref{fig:rates} reconstructions of the function $f_0$ seen \cref{fig:f0}. Since $\kappa = -0.3$, $R=1$ and $f_0$ is essentially $B\cdot \Sigma_1$-band limited with $B = 100$, we read off $b_s = 0.7$ and $b_\alpha = 3.71$ from \cref{fig:bowties}. 

Then the minimum sampling rate should be $N_s\cdot N_\alpha$, where $N_s = B \frac{L}{\pi} b_s = 200$ and $N_\alpha = B b_\alpha = 371$. For the sampling rates $N_s\times N_\alpha$, $N_s/2\times N_\alpha/2$ and $N_s/3\times N_\alpha/3$, we compute $I_0 f$, upsample to a fixed grid (to avoid unnecessary errors due to the inversion process), visualize the global Fourier transform of the data, and run a filtered-backprojection. One may see on the Fourier plot when aliasing occurs, and as a result, how the reconstruction gets affected.  

\begin{figure}[htpb]
    \centering
    \includegraphics[height=0.19\textheight]{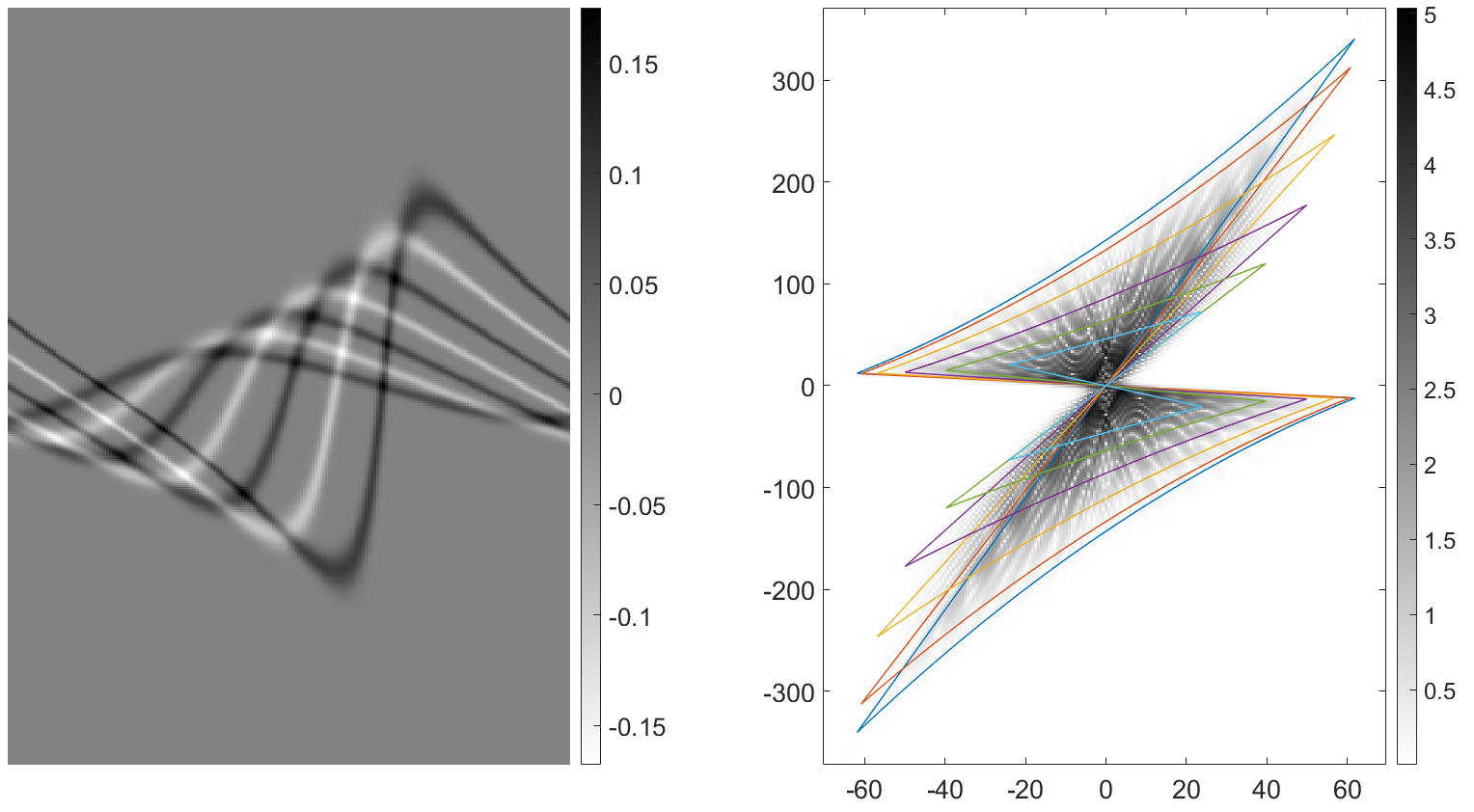} \;\;
    \includegraphics[height=0.19\textheight]{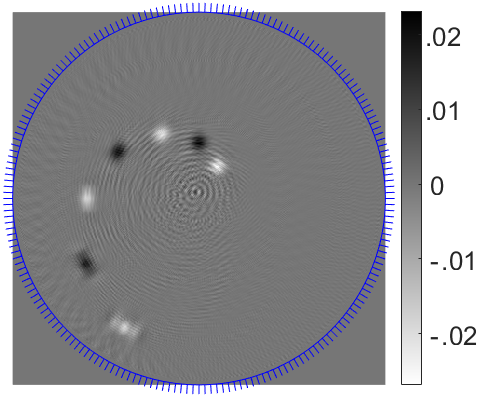} \\
    \includegraphics[height=0.19\textheight]{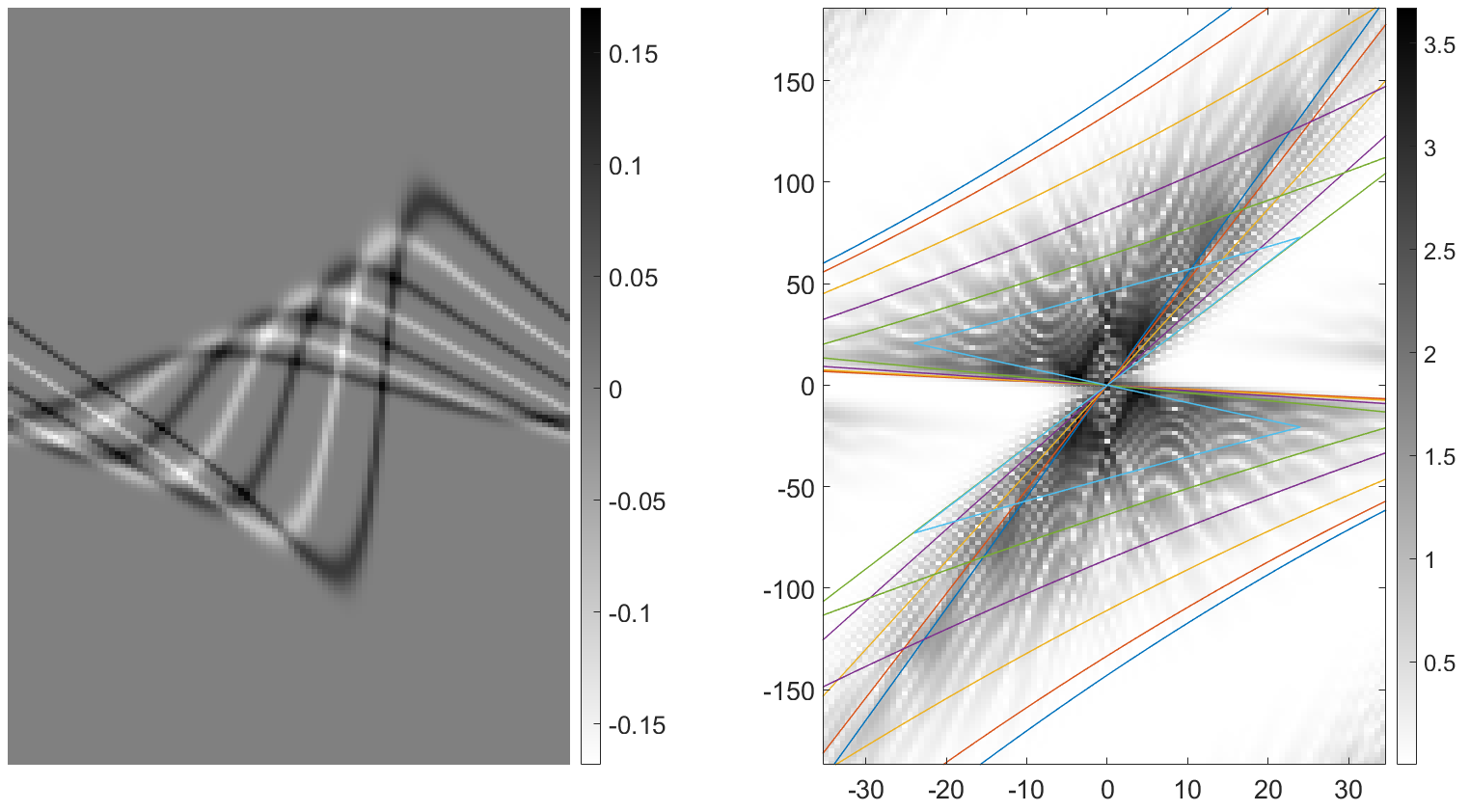} \;\;
    \includegraphics[height=0.19\textheight]{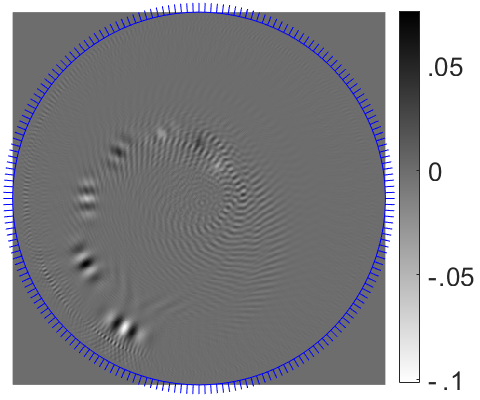} \\
    \includegraphics[height=0.19\textheight]{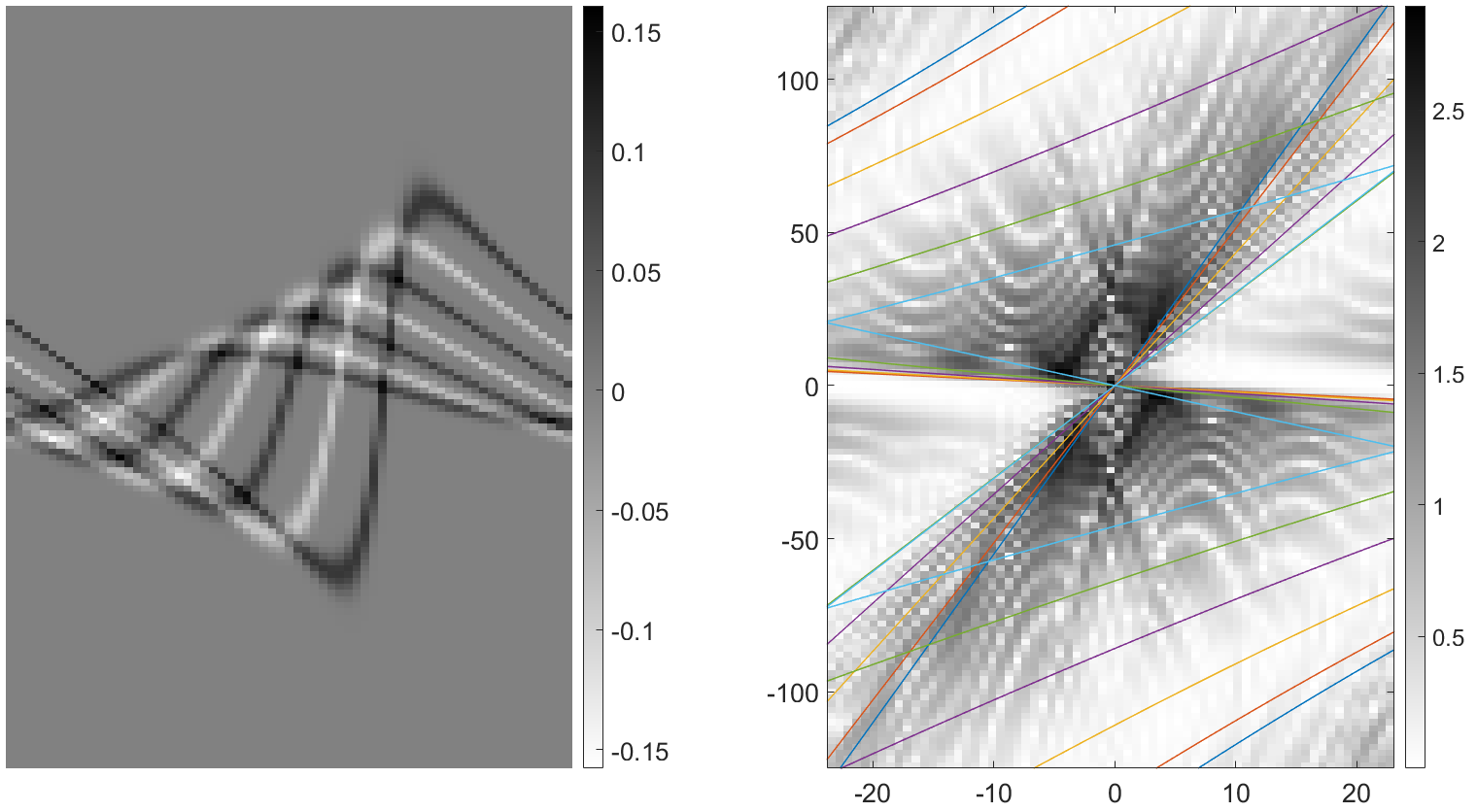} \;\;
    \includegraphics[height=0.19\textheight]{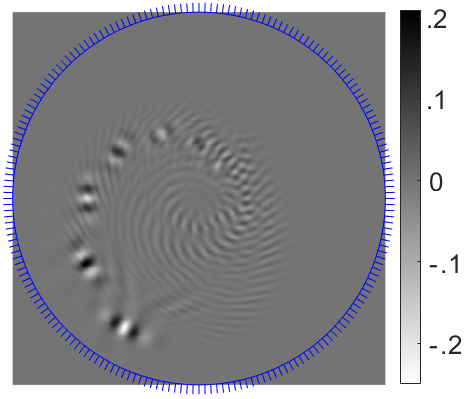}    
    \caption{Example 5. Left to right: $I f_0$ in fan-beam coordinates; the Fourier transform of $I f_0$ (with $B\cdot \Sigma_1$ bowties superimposed); pointwise error after reconstruction. Top to bottom: sampling at $B$, $B/2$, $B/3$. In the bottom row, the error structure shows variations conormal to the longest geodesics passing through each gaussian, as an indication that these singularities are the first ones to become aliased. The high-frequency oscillations at the center of the reconstructions are due to the aliasing on the top-left and bottom-right corners of the bottom Fourier plot, as will be explained in \cref{sec:aliasing}.}
    \label{fig:rates}
\end{figure}

\subsection{Aliasing and artifact prediction} \label{sec:aliasing}

The next question we address is the following: 
\begin{description}
    \item[Q3] Given an essentially $B$-band limited function $f$ and sub-sampled X-ray data for that function, can we predict the location and frequency of reconstruction artifacts ?
\end{description}
 
As explained in \cref{sec:samplingFIOs}, aliasing artifacts due to sub-sampling can be modeled as an h-FIO with explicit canonical relation (periodization in the dual variable), and the reconstruction from sub-sampled data can be understood by composing that canonical relation with the inverse of that of $I_0$. Since the answer depends on the choice of coordinate system for $I_0 f$, we will limit ourselves to experiments in fan-beam coordinates, and provide some heuristic expectations about parallel coordinates. 

\smallskip
\noindent{\bf Backprojection geometry.} Fixing $(x,v)\in \partial_+ SM$, a singularity of the form $\lambda \eta_{x,v,u\tau(x,v)}$ in data space will give rise to, upon applying an aliasing-free version of $I_0^{-1}$, a singularity at $\lambda (\dot \gamma_{x,v}(u\tau(x,v)))_\perp^\flat$. Freezing $(x,v)$, the relevant parameters are $\lambda\in \Rm\backslash \{0\}$ and $u\in [0,1]$ (rescaled time variable along the geodesic). Fixing a geometry $(\kappa, R)$, a point $(x,v)\in \partial_+ SM$ and a coordinate system $(s,\alpha)$ or $(w,\t)$, one may partition the cotangent fiber $T^*_{(x,v)}(\partial_+ SM)$ into a region which is the microlocal kernel of $I_0^{-1}$, and on the complement, we can draw iso-$u$ and iso-$\lambda$ to implicitly predict the locations and frequencies of backprojected singularities. \Cref{fig:Backproj} visualizes $(u,\lambda)$ coordinate axes in $(\eta_s,\eta_\alpha)$ and $(\eta_w,\eta_\t)$ coordinates above the point $(s,\alpha) = (0,0)$ (whose geodesic passes through the center), for $R=1$ and several choices of $\kappa$.

\begin{figure}[htpb]
    \centering
    \begin{subfigure}[b]{\textwidth}
        \centering
	\begin{tabular}{ccccc}
	    \includegraphics[width=0.16\textwidth]{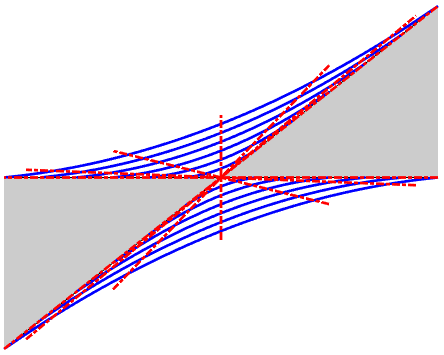} &
	    \includegraphics[width=0.16\textwidth]{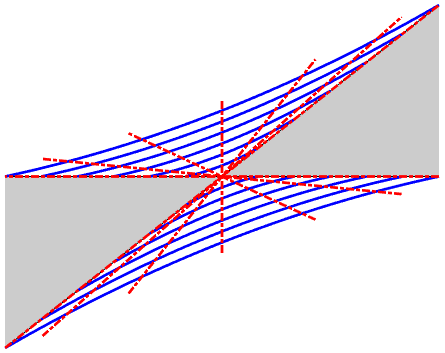} &
	    \includegraphics[width=0.16\textwidth]{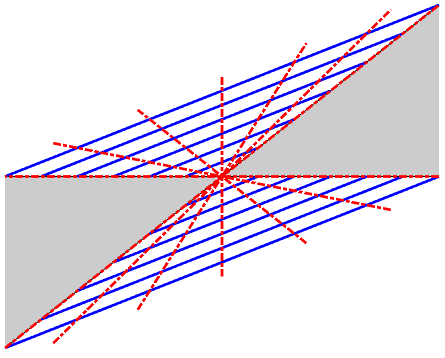} &
	    \includegraphics[width=0.16\textwidth]{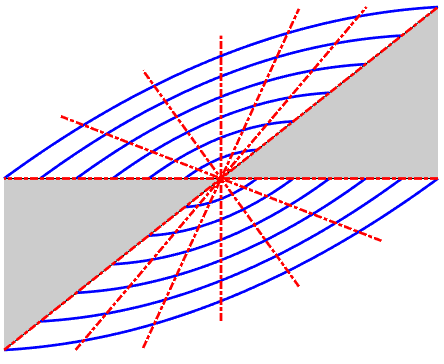} &
	    \includegraphics[width=0.16\textwidth]{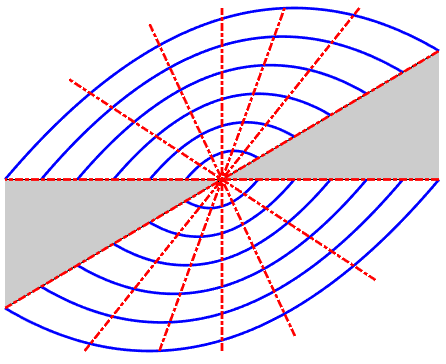} \\
	    $\kappa=-0.6$ & $\kappa=-0.3$ & $\kappa=0$ & $\kappa=0.3$ & $\kappa=0.6$
	\end{tabular}
	\caption{Axes: $(\eta_s,\eta_\alpha)$}
	\label{fig:BackprojFB}
    \end{subfigure} 
    \begin{subfigure}[b]{\textwidth}
	\centering
	\begin{tabular}{ccccc}
	    \includegraphics[width=0.16\textwidth]{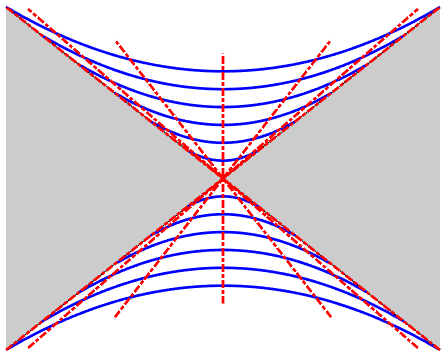} &
	    \includegraphics[width=0.16\textwidth]{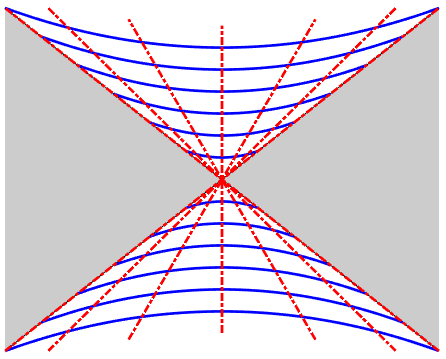} &
	    \includegraphics[width=0.16\textwidth]{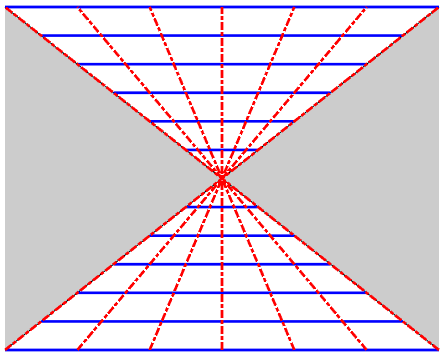} &
	    \includegraphics[width=0.16\textwidth]{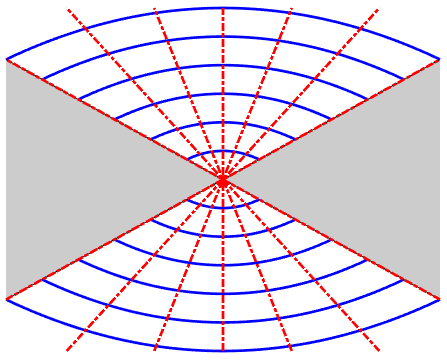} &
	    \includegraphics[width=0.16\textwidth]{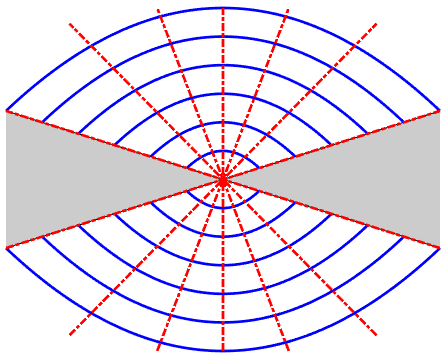} \\
	    $\kappa=-0.6$ & $\kappa=-0.3$ & $\kappa=0$ & $\kappa=0.3$ & $\kappa=0.6$
	\end{tabular}
	\caption{Axes: $(\eta_w,\eta_\t)$}
	\label{fig:BackprojWP}
    \end{subfigure} 
    \caption{Backprojection geometry on the fiber above $(s,\alpha) = (0,0)$. Equispaced iso-$\lambda$ in solid blue (increasing outward), equispaced iso-$u$ in dashed red (increasing clockwise from $0$ to $1$ on each non-shaded region), microlocal kernel in shaded gray. The picture may differ when looking at $\alpha\ne 0$, but is independent of $s$. }
    \label{fig:Backproj}
\end{figure}

\smallskip
\noindent{\bf Example 6: aliasing in fan-beam coordinates.} Fix $R = 1$ and $\kappa = 0.4$, and consider the function $f$ to be a coherent state with ``main'' frequency $B = 100$ ($h = .01$) conormal to $\gamma_{s,\alpha}(u\tau)$ with $s = L/4$, $\alpha = \pi/8$ and $u = 0.8$. We use the main frequency instead of the band limit as parameter for visual convenience. 

After finding $(N_s, N_\alpha)$ corresponding to resolving details with band limit $B$, we visualize reconstructions coming from data of size $(C_s N_s, C_\alpha N_\alpha)$, where $C_s, C_\alpha$ are undersampling $(C<1)$ or oversampling $(C>1)$ constants. Note that to avoid additional discretization issues due the inversion algorithm, we upsample all data sets to $(2N_s,2N_\alpha)$ before applying filtered-backprojection. The outcomes are displayed \cref{fig:aliasingCPC}. Each column corresponds to a different value for the constants $(C_s,C_\alpha)$, and we make the following comments: 
\begin{description}
    \item[\cref{fig:aliasingCPC1}] Benchmark/reference case. The original function is accurately reconstructed with no artifact. Neither singularity is aliased. 
    \item[\cref{fig:aliasingCPC2}] The left singularity (only) is aliased and gives rise to an artifact of higher frequency than the original, as is illustrated in the bottom Fourier Transform plot, and the fact the periodized singularity is outside the solid bowtie on the second row. 
    \item[\cref{fig:aliasingCPC3}] Both singularities are aliased and land into the microlocal kernel region for the backprojection operator. There are very weak artifacts of 10\% the original magnitude. 
    \item[\cref{fig:aliasingCPC4}] Both singularities are aliased, each gives rise to artifacts along the geodesic, one with lower frequency than the original (coming from the right singularity), one with higher frequency (coming from the left singularity).
\end{description}

\smallskip
\noindent{\bf Heuristic comments about parallel coordinates.} We point out what one should expect in the case of parallel coordinates: 
\begin{itemize}
    \item Unlike fan-beam coordinates, uniform Cartesian sampling in parallel coordinates provides the same resolution on {\em both} $C_+(\Omega)$ and $C_-(\Omega)$ for any $\omega\in \WFH(f)$. In particular, the scenario of ``partial aliasing'' as in \cref{fig:aliasingCPC2} cannot happen. 
    \item Undersampling in $w$ (resp. $\t$) will result in horizontal (resp. vertical) folding of the bowties in \cref{fig:BackprojWP}. Folded singularities can land in the microlocal kernel of $I_0^{-1}$, or in the upper and lower triangular zones, where the iso-$u$ and iso-$\lambda$ curves on \cref{fig:BackprojWP} help locate the location and frequency of the artifacts as in the fan-beam case. 
\end{itemize}

\begin{figure}[htpb]
    \centering
    \begin{subfigure}[b]{0.24\textwidth}
	\centering
	\includegraphics[width=\textwidth]{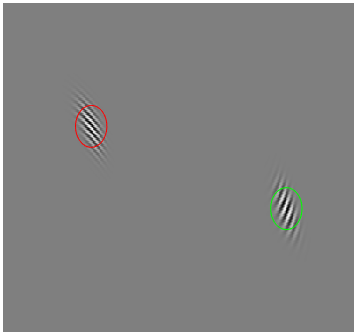} \\
	\includegraphics[trim = 15 15 15 15, clip, width=0.48\textwidth]{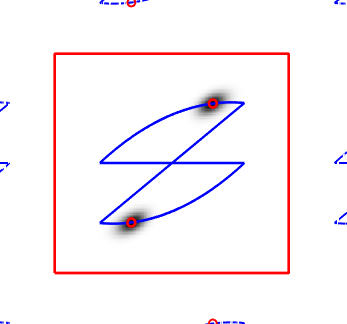}
	\includegraphics[trim = 15 15 15 15, clip, width=0.48\textwidth]{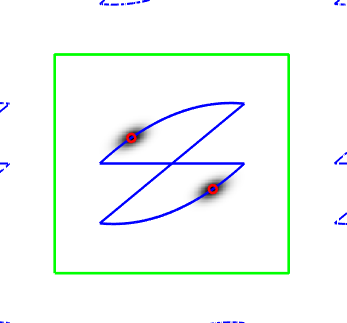} \\
	\includegraphics[width=\textwidth]{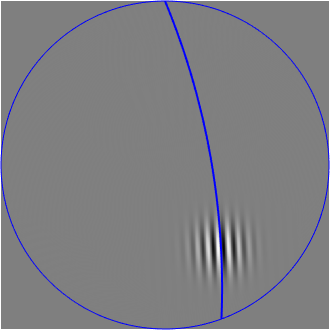} \\
	\includegraphics[width=\textwidth]{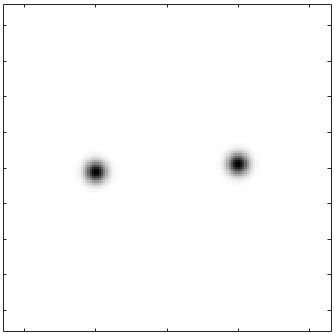}	
	\caption{\!$(C_s,C_\alpha)\!=\!(1.5,1.5)$\!}
	\label{fig:aliasingCPC1}
    \end{subfigure}
    \begin{subfigure}[b]{0.24\textwidth}
	\centering
	\includegraphics[width=\textwidth]{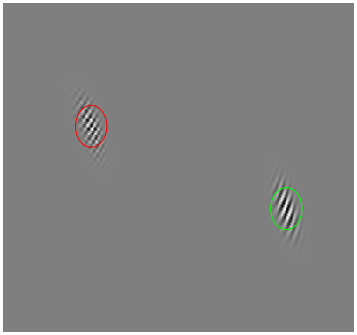}
	\includegraphics[trim = 15 15 15 15, clip, width=0.48\textwidth]{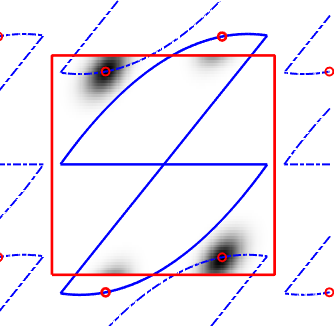}
	\includegraphics[trim = 15 15 15 15, clip, width=0.48\textwidth]{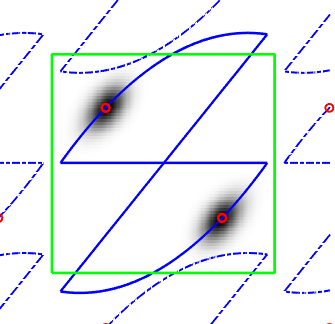} \\
	\includegraphics[width=\textwidth]{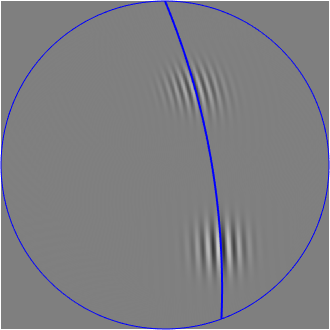} \\
	\includegraphics[width=\textwidth]{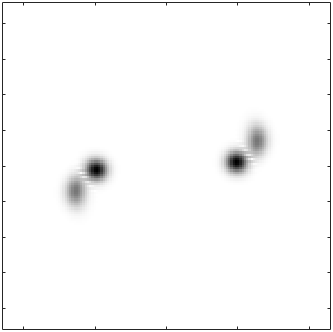}	
	\caption{$(C_s,C_\alpha)=(1,0.7)$}
	\label{fig:aliasingCPC2}
    \end{subfigure}    
    \begin{subfigure}[b]{0.24\textwidth}
	\centering
	\includegraphics[width=\textwidth]{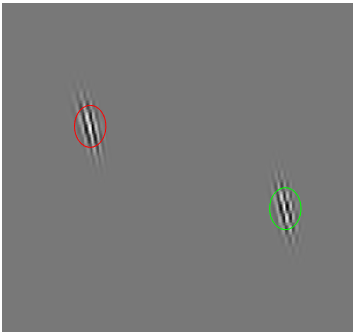}
	\includegraphics[trim = 15 15 15 15, clip, width=0.48\textwidth]{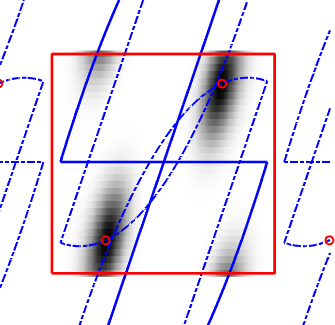}
	\includegraphics[trim = 15 15 15 15, clip, width=0.48\textwidth]{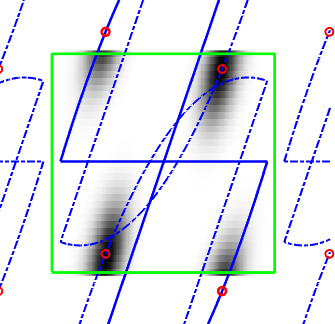} \\
	\includegraphics[width=\textwidth]{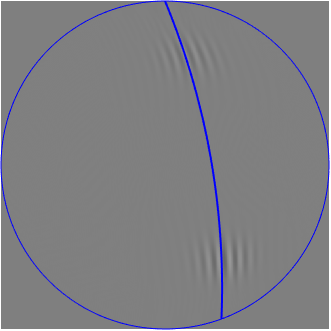} \\
	\includegraphics[width=\textwidth]{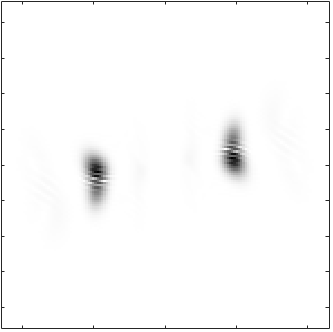}	
	\caption{$(C_s,C_\alpha)=(1,0.3)$}
	\label{fig:aliasingCPC3}
    \end{subfigure}        
    \begin{subfigure}[b]{0.24\textwidth}
	\centering
	\includegraphics[width=\textwidth]{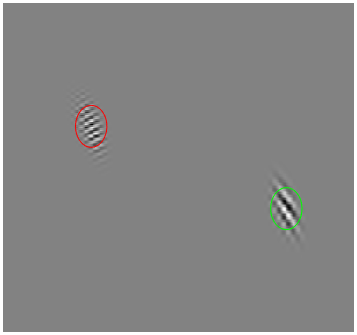}
	\includegraphics[trim = 15 15 15 15, clip, width=0.48\textwidth]{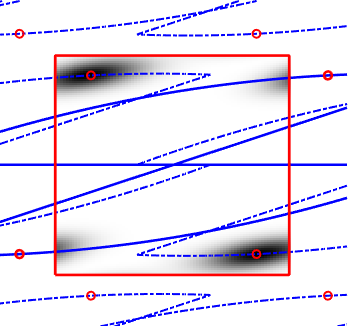}
	\includegraphics[trim = 15 15 15 15, clip, width=0.48\textwidth]{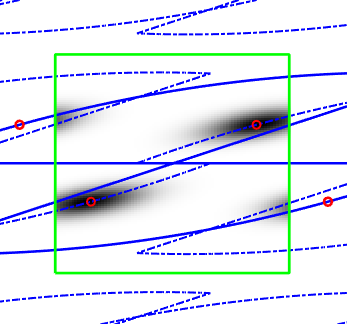} \\
	\includegraphics[width=\textwidth]{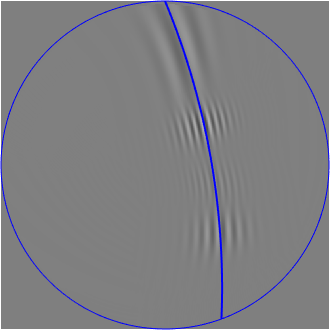} \\
	\includegraphics[width=\textwidth]{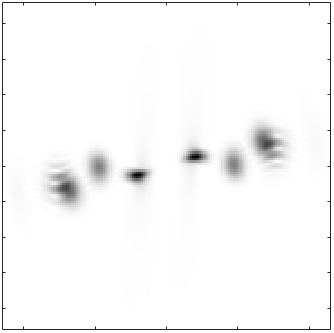}	
	\caption{$(C_s,C_\alpha)=(0.4,1)$}
	\label{fig:aliasingCPC4}
    \end{subfigure}
    \caption{Aliasing experiment in $(R,\kappa) = (1,0.4)$ geometry, in fan-beam coordinates. For each column, top to bottom: X-ray data in fan-beam; windowed Fourier Transform around each wave packet in data space; reconstructed function and possible artifacts (the whole row is on the same color scale); Fourier transform of the reconstruction (the whole row has the same axis scale but the color axis is rescaled for contrast).}
    \label{fig:aliasingCPC}
\end{figure}

\subsection{Further upsampling strategies to reduce the sampling limit}

In this last experiment, we illustrate the ideas explained in \cref{sec:slanted}, in the simplest case where the matrix $W$ is diagonal. We illustrate this idea in fan-beam coordinates and negative curvature with parameter $\kappa = -0.3$.  
In the case $\kappa = 0$, some of these ideas date back to \cite{Natterer1993}.  

Consider the function from \cref{fig:Ex4}, whose Fourier transform modulus consists of two gaussians center at the central frequency $B = 100$. The Nyquist cartesian box containing the frequency set of $I_0 f$ is ${\mathcal  P} = [-b_s,b_s]\times [-b_\alpha,b_\alpha]$ (the red box in \cref{fig:Ex7_2_1}). However, that frequency set is included in the parallelogram ${\mathcal  B}\subset {\mathcal  P}$ of vertices $(-b_s,-b_\alpha)$, $(b_s,0)$, $(b_s,b_\alpha)$, $(-b_s,0)$(the red parallelogram in \cref{fig:Ex7_2_2}). The set ${\mathcal  B}$ tiles the plane twice as densely as ${\mathcal  P}$ without intersections, and thus allows for a twice-coarser vertical sampling rate.

We sample $I_0 f$ at a non-aliasing horizontal rate $1.2\cdot b_s = 240$, while the vertical rate, $0.6\cdot b_\alpha=223$ is aliasing for a box-based inversion, non-aliasing for a parallelogram-based inversion, see \cref{fig:Ex7}. 

\begin{figure}[htpb]
    \centering
    \includegraphics[height=0.14\textheight]{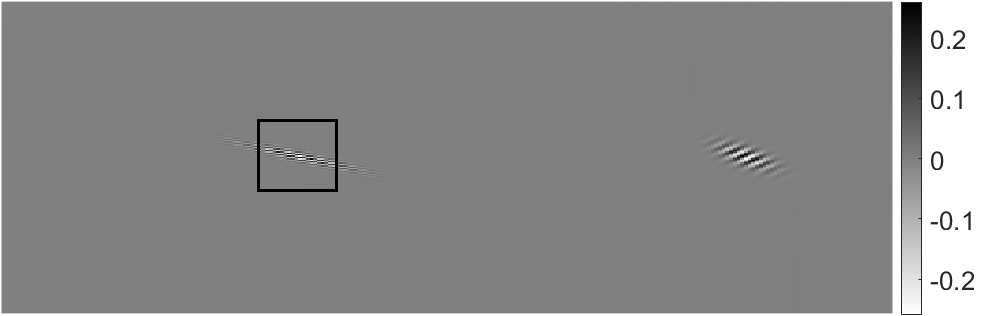}
    \includegraphics[height=0.14\textheight]{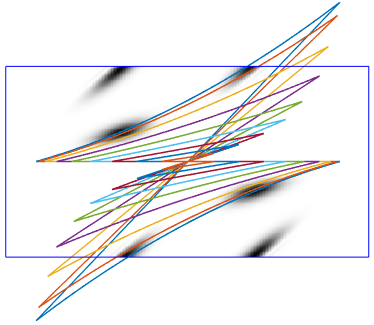}
    \caption{Left: vertically subsampled data, the boxed singularity on the left is aliased. Right: modulus of Fourier transform of $I_0 f$ with bowties superimposed. The surrounding blue box is the frequency range allowed by the sampling.}
    \label{fig:Ex7}
\end{figure}

We upsample $I_0 f$ to a grid three times the size along each axis, in two ways: 
\begin{itemize}
    \item Using regular Lanczos-3 sampling along coordinate axes. See \cref{fig:Ex7_2_1}. 
    \item Via Fourier, by periodizing the spectrum 3 times along the coordinate axes, multiplying by the characteristic function of ${\mathcal  B}$, and taking the inverse Fourier transform. See \cref{fig:Ex7_2_2}.
\end{itemize} 

We run an inversion on each upsampled data set. The reconstructions show artifacts in the first case, no artifacts in the second, see \cref{fig:Ex7_2}. 

\begin{figure}[htpb]
    \centering
    \begin{subfigure}[b]{\textwidth}
	\centering
	\includegraphics[height=0.12\textheight]{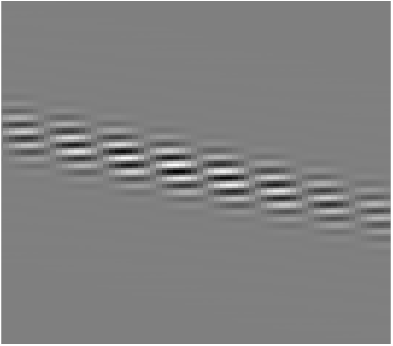}
	\includegraphics[height=0.12\textheight]{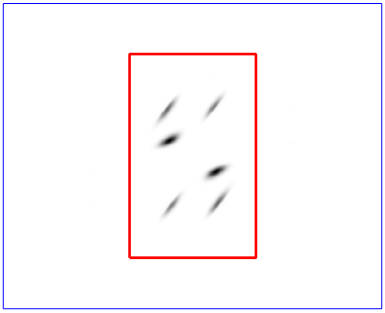} 
	\includegraphics[height=0.12\textheight]{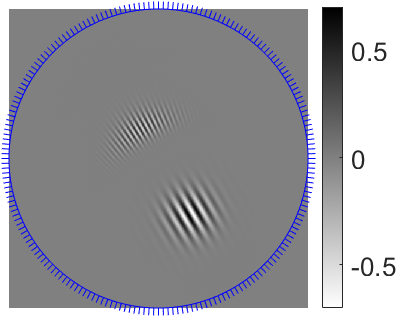} 
	\includegraphics[height=0.12\textheight]{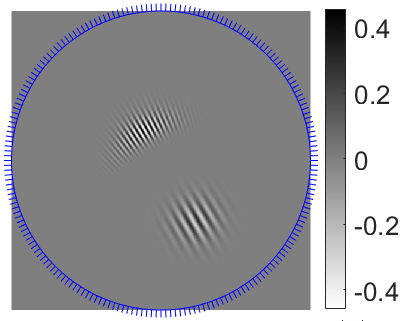}
	\caption{Upsampling method based on ${\mathcal  P}$. The singularity remains aliased after upsampling. Reconstruction has aliasing artifacts.}
	\label{fig:Ex7_2_1}
    \end{subfigure}
    \begin{subfigure}[b]{\textwidth}
	\centering
	\includegraphics[height=0.12\textheight]{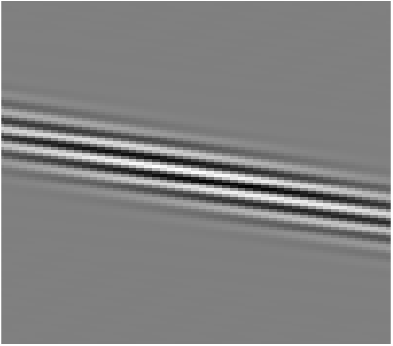}
	\includegraphics[height=0.12\textheight]{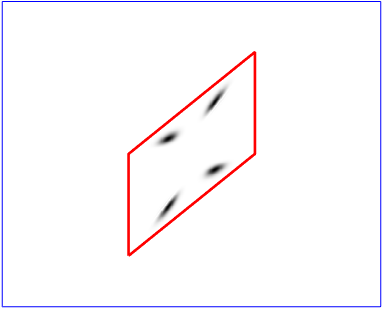}
	\includegraphics[height=0.12\textheight]{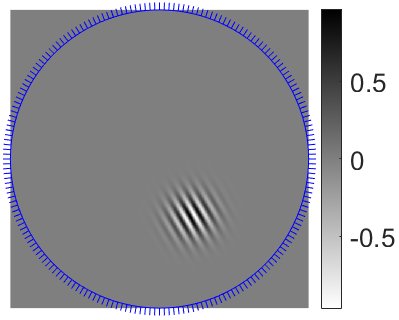} 
	\includegraphics[height=0.12\textheight]{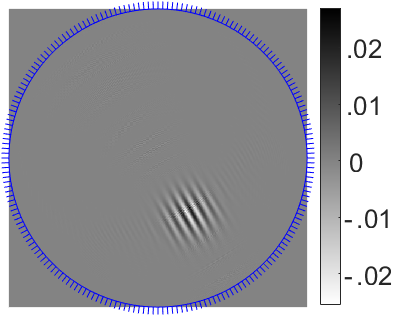}
	\caption{Upsampling method based on ${\mathcal  B}$. The singularity is properly recovered after upsampling. Reconstruction has no aliasing artifacts.}
	\label{fig:Ex7_2_2}
    \end{subfigure} 
    \caption{Left to right: zoom on the left singularity (boxed in \cref{fig:Ex7}) after upsampling; modulus of Fourier transform of upsampled data; reconstruction; pointwise error.}
    \label{fig:Ex7_2}
\end{figure}

\section{Conclusions}
\label{sec:conclusions}  
For the geodesic X-ray transform $If$ on simple surfaces, we developed a theory for determining the sharp discretization rates of $If$ given an a priori approximate band limit of $f$, as the step size tends to zero. In particular, we show how the discretization rate of $If$ would be determined by that of $f$ in numerical simulations. If the discretization/sampling condition is not satisfied, then $If$ is aliased, and the reconstructed $f$ is aliased as well. Unlike what happens in classical sampling theory, the aliasing artifacts of $f$ are not local: they appear at different locations, together with changes of the magnitude and the orientation of the frequencies; and we characterize those artifacts. 
We estimate sharply the resolution limit of the reconstructed $f$ imposed by the sampling rate of the data $If$. 

We express the necessary conditions in terms of the Jacobi fields of the metric and of the coordinate system chosen. We analyze constant curvature disks in more detail, where the sharp sampling conditions are explicitly computed. In particular, we demonstrate the effect of the curvature and of the boundary curvature going to zero or having length going to infinity, on the sampling rates. 

The analysis is based on the asymptotic sampling theory as the step size gets smaller and smaller, developed by the second author in \cite{Stefanov2018}, and it uses semiclassical microlocal methods. In particular, the notion of band limit is played by the semiclassical wave front set and the canonical relation of $I$, as an Fourier Integral Operator, is the one relating the ``band limits'' of $f$ and $If$.

\appendix
\section{Proof of \cref{lem:Psi}} \label{sec:app}

We first compute the differential of the scattering relation defined in \cref{eq:scatrel}. In fact, upon defining the function $\tilde \tau\colon \partial SM \to \Rm$ as the fiberwise odd extension of $\tau$, it is known that $\tilde\tau$ is smooth (see e.g. \cite[Lemma 4.1.1]{Sharafudtinov1994}), and $S$ smoothly extends to $\partial SM$ through the relation
\begin{align*}
    S \colon \partial SM\to \partial SM, \qquad S(x,v) = \varphi_{\tilde\tau(x,v)} (x,v).
\end{align*}

Denote $\G{\cdot}{\cdot}$ the Sasaki metric on $SM$ and $\g{\cdot}{\cdot}$ the metric on $M$. On $\partial SM$ we pull the Sasaki metric back from $SM$ via the inclusion $\partial SM\to SM$. With $\nu_x$ the unit inner normal at $x\in \partial M$, the oriented tangent vector is $t_x = -\nu_x^\perp$, and we denote $\HH_{(x,v)}$ the horizontal lift of $t_x$ at $(x,v)\in \partial_\pm SM$, i.e. horizontal-tangential differentiation (similarly, denote $N_{(x,v)}$ the horizontal lift of $\nu_x$). Then $(\HH,V)$ is an orthonormal frame for $T(\partial SM)$.

For $(x,v)\in \partial SM$, let us denote $\mu(x,v) = \g{\nu_x}{v} = \G{N_{(x,v)}}{X_{(x,v)}}$ and $\mu_\perp (x,v) = \g{\nu_x}{v_\perp} = - \G{N_{(x,v)}}{X_\perp|_{(x,v)}}$, and in particular, \rev{the relation
    \begin{align*}
	t_x = \g{-\nu_\perp}{v} v + \g{-\nu^\perp}{v_\perp} v_\perp = \mu_\perp v + \mu v_\perp
    \end{align*}
    induces, by horizontal lift, the relation $H = \mu_\perp X + \mu X_\perp$. 

}

\begin{lemma}\label{lem:scatrel} Let $(x,v) \in \partial SM$. Then we have the following 
    \begin{align*}
	\left[
	\begin{array}{c}
	    dS|_{(x,v)} \HH_{(x,v)} \\ dS|_{(x,v)} V_{(x,v)}
	\end{array}
    \right] = \left[
    \begin{array}{cc}
	\mu(x,v) \frac{a(x,v,\tilde\tau)}{\mu(S(x,v))} &  -\mu(x,v) \dot a(x,v,\tilde\tau) \\
	-\frac{b(x,v,\tilde\tau)}{\mu(S(x,v))} & \dot b(x,v,\tilde\tau) 
    \end{array}
\right] \left[
	\begin{array}{c}
	    \HH_{S(x,v)} \\ V_{S(x,v)}
	\end{array}
    \right].
    \end{align*}    
\end{lemma}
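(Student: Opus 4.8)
\textbf{Proof proposal for \cref{lem:scatrel}.} The plan is to differentiate the identity $S=\varphi_{\tilde\tau}$ on $\partial SM$, which is legitimate because $\tilde\tau$ (the fiberwise-odd extension of $\tau$) and hence $S$ are smooth on $\partial SM$, as recalled above. For $Z\in T_{(x,v)}\partial SM$ the chain rule gives
\begin{align*}
    dS|_{(x,v)} Z = d\varphi_{\tilde\tau(x,v)}|_{(x,v)} Z + \big(d\tilde\tau|_{(x,v)} Z\big)\, X_{S(x,v)},
\end{align*}
with $X$ the geodesic vector field. I would evaluate this for $Z\in\{\HH_{(x,v)},V_{(x,v)}\}$ and then re-express the answer in the target frame $(\HH_{S(x,v)},V_{S(x,v)})$.

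First I would compute the linearization of the geodesic flow in the frame $(X,X_\perp,V)$. Flow-invariance gives $d\varphi_t X=X$, while the contact distribution $\operatorname{span}(X_\perp,V)$ is preserved by $\varphi_t$, and there — using the structure equations $[X,V]=X_\perp$, $[X_\perp,V]=-X$, $[X,X_\perp]=-\kappa V$, equivalently the Jacobi equation \cref{eq:ab} — the components of $d\varphi_t$ evolve by a linear system whose fundamental matrix is assembled from $a,b$ and $\dot a,\dot b$; concretely $d\varphi_t|_{(x,v)}$ sends the $(X_\perp,V)$-plane to the $(X_\perp,V)$-plane at $\varphi_t(x,v)$ by a $2\times 2$ matrix with entries $\pm a,\pm b,\pm\dot a,\dot b$ (the signs dictated by the structure equations) of determinant $a\dot b-\dot a b=1$. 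Using the identity $\HH=\mu_\perp X+\mu X_\perp$, valid at any boundary point, this yields explicit expressions for $d\varphi_{\tilde\tau}\HH_{(x,v)}$ and $d\varphi_{\tilde\tau}V_{(x,v)}$ in the frame $(X,X_\perp,V)$ at $S(x,v)$, with the Jacobi quantities evaluated at $(x,v,\tilde\tau(x,v))$.

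Next I would pin down $d\tilde\tau(\HH)$ and $d\tilde\tau(V)$. Since $S$ maps $\partial SM$ into $\partial SM$, the vector $dS|_{(x,v)}Z$ must lie in $T_{S(x,v)}\partial SM=\operatorname{span}(\HH_{S(x,v)},V_{S(x,v)})$; equivalently $\G{dS|_{(x,v)}Z}{N_{S(x,v)}}=0$, where $N$ is the horizontal lift of $\nu$. Taking the Sasaki inner product of the chain-rule identity with $N_{S(x,v)}$ and using $\G{N}{X}=\mu$, $\G{N}{X_\perp}=-\mu_\perp$, $\G{N}{V}=0$ gives, for each of $Z\in\{\HH,V\}$, a single scalar equation determining $d\tilde\tau(Z)$ in terms of $a,b,\mu,\mu_\perp$ at the source and target. (Equivalently, one differentiates $r\big(\gamma_{x,v}(\tilde\tau(x,v))\big)=0$ for a boundary defining function $r$ with $\nabla r|_{\partial M}=\nu$.) Substituting these values back, the $X_{S(x,v)}$-terms recombine with the $\mu_\perp(S(x,v))\,X_{S(x,v)}$ part of $\HH_{S(x,v)}$, and rewriting in the target frame $(\HH_{S(x,v)},V_{S(x,v)})$ produces exactly the stated matrix. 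A useful end check: its determinant equals $\mu(x,v)/\mu(S(x,v))$ since $a\dot b-\dot a b\equiv 1$, the expected Jacobian factor relating the boundary measures $|\mu|\,d\Sigma^2$ at source and target under $S$.

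The main obstacle is purely the bookkeeping of signs and orientations: the precise signs in the matrix of $d\varphi_t$ on $(X_\perp,V)$ (i.e., the identification of a tangent vector in the horizontal/vertical splitting with the corresponding Jacobi field and its covariant derivative), the sign of $\mu_\perp$ and of $\G{N}{X_\perp}$, and remembering that on $\partial_- SM$ one has $\mu<0$, so $\mu(S(x,v))$ is negative in the formulas. Everything else is a short linear-algebra computation.
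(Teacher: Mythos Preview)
Your proposal is correct and follows essentially the same route as the paper: differentiate $S=\varphi_{\tilde\tau}$ via the chain rule, use the Jacobi-field description of $d\varphi_t$ in the frame $(X,X_\perp,V)$, determine $d\tilde\tau(V)$ and $d\tilde\tau(\HH)$ from the tangency constraint $\G{N_{S(x,v)}}{dS\,Z}=0$, and then recombine using $\HH=\mu_\perp X+\mu X_\perp$ at the target. The paper carries out exactly these steps (with the explicit formulas $d\varphi_t X_\perp = a\,X_\perp - \dot a\,V$ and $d\varphi_t V = -b\,X_\perp + \dot b\,V$), so the only thing left for you is the sign bookkeeping you already flagged.
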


In particular, with the Wronskian property $a\dot b - b\dot a \equiv 1$, the Jacobian determinant of $S$ is indeed $\frac{\mu(x,v)}{\mu(S(x,v))}$ as is well-known.
\begin{proof}  
    With $a,b$ defined in \cref{eq:ab}, the differential of the geodesic flow is given by 
    \begin{align*}
	d\varphi_t|_{(x,v)} X_{(x,v)} &= X(t), \\
	d\varphi_t|_{(x,v)} X_{\perp (x,v)} &= a(x,v,t) X_{\perp}(t) - \dot a(x,v,t) V(t) , \\
	d\varphi_t|_{(x,v)} V_{(x,v)} &= -b(x,v,t) X_\perp (t) + \dot b(x,v,t) V(t),    
    \end{align*}
    where for $Y\in \{X,X_\perp,V\}$, $Y(t)$ denotes $Y_{\varphi_t(x,v)}$. We first compute
    \begin{align*}
	dS|_{(x,v)} V &= V(\varphi_{\tilde\tau(x,v)}(x,v)) \\    
	&= d\varphi_{\tilde\tau}|_{(x,v)} V + V\tilde\tau(x,v) \frac{d\varphi_{\tilde\tau}}{dt} \\
	&= -b(x,v,\tilde\tau) X_\perp + \dot b(x,v,\tilde\tau) V + V\tilde\tau(x,v) X, 
    \end{align*}
    where all vectors sit above $S(x,v)$. We now use that the result must be tangent to $\partial SM$, that is,
    \begin{align*}
	0 &= \G{N_{S(x,v)}}{-b(x,v,\tilde\tau) X_\perp + \dot b(x,v,\tilde\tau) V + V\tilde\tau(x,v) X} \\
	&= b(x,v,\tilde\tau) \mu_\perp(S(x,v)) + 0 + V\tilde\tau(x,v) \mu(S(x,v)).  
    \end{align*}
    This gives the relation $V\tilde\tau(x,v) = b(x,v,\tilde\tau) \frac{-\mu_\perp(S(x,v))}{\mu(S(x,v))}$ and in turn

    \begin{align*}
	dS|_{(x,v)} V &= \frac{b(x,v,\tilde\tau)}{\mu(S(x,v))} ( - \mu_\perp(S(x,v)) X - \mu(S(x,v)) X_\perp) + \dot b(x,v,\tilde\tau) V \\
	&= - \frac{b(x,v,\tilde\tau)}{\mu(S(x,v))} \HH + \dot b (x,v,\tilde\tau) V. 
    \end{align*}
    Similarly, we compute 
    \begin{align*}
	dS|_{(x,v)} \HH &= \HH(\varphi_{\tilde\tau(x,v)}(x,v)) \\
	&= d\varphi_{\tilde\tau}|_{(x,v)} \HH + \HH \tilde\tau(x,v) X \\
	&= (\HH \tilde\tau(x,v) + \mu_\perp(x,v)) X + \mu(x,v) (a(x,v,\tilde\tau) X_\perp - \dot a(x,v,\tilde\tau) V).
    \end{align*}
    We now use that the result must be tangent to $\partial SM$, that is 
    \begin{align*}
	0 &= \G{N_{S(x,v)}}{dS|_{(x,v)} \HH} \\
	&= (\HH\tilde\tau(x,v)+\mu_\perp(x,v))\mu(S(x,v)) - \mu(x,v) a(x,v,\tilde\tau) \mu_\perp (S(x,v)) + 0. 
    \end{align*}
    Using this to replace the expression of $\HH \tilde\tau + \mu_\perp(x,v)$, we arrive at 
    \begin{align*}
	dS|_{(x,v)} \HH &= \mu(x, v) \left( \frac{a(x,v,\tilde\tau)}{\mu(S(x,v))} \left( \mu_\perp(S(x,v)) X + \mu(S(x,v)) X_\perp \right) - \dot a(x,v,\tilde\tau) V \right) \\
	&= \mu(x,v) \left( \frac{a(x,v,\tilde\tau)}{\mu(S(x,v))} \HH - \dot a(x,v,\tilde\tau) V \right), 
    \end{align*}
    hence the result.
\end{proof}

We are now ready to prove \cref{lem:Psi}. 

\begin{proof}[Proof of \cref{lem:Psi}]
    From the calculations of \cref{lem:scatrel}, the \rev{differential} of the map
    \begin{align*}
	E_x \colon \partial_+ S_x M \to \partial M \backslash \{x\}, \qquad E_x(v) := \pi (S(x,v)) 
    \end{align*}
    is easily computed as
    \begin{align*}
	dE_x (V) = d\pi|_{S(x,v)} dS|_{(x,v)} V = \frac{- b(x,v,\tau)}{ \mu(S(x,v))} \partial_{s'} = \frac{b(x,v,\tau)}{|\mu(S(x,v))|} \partial_{s'},
    \end{align*}
    where $\partial_{s'}$ is the unit tangent vector along the second copy of $\partial M$. Now observing that $\Psi(x,v) = (x,E_x(v))$, we directly compute
    \begin{align*}
	d\Psi (V) = \frac{-b(x,v,\tau)}{\mu\circ S}\ \partial_{s'}, \qquad d\Psi (H) = \partial_s + \frac{\mu}{\mu\circ S} a(x,v,\tau)\ \partial_{s'}. 
    \end{align*}    
    Hence the Jacobian determinant of $\Psi$ is $\frac{-b(x,v,\tau)}{\mu\circ S}$. If $(M,g)$ is simple, then the no-conjugate-point assumption implies that $b(x,v,\tau(x,v))\ne 0$ for all $(x,v)\in (\partial_+ SM)^{int}$, and thus $\Psi$ is a global diffeomorphism.
\end{proof}


\bibliographystyle{siamplain}


\end{document}